\pgfplotsset{compat=1.11}
\newtheorem{theorem}{Theorem}[section]
\newtheorem*{theorem*}{Theorem}
\newtheorem{prop}[theorem]{Proposition}
\newtheorem*{prop*}{Proposition}
\newtheorem{lemma}[theorem]{Lemma}
\newtheorem*{lemma*}{Lemma}
\newtheorem*{obs*}{Observation}
\newtheorem{claim}[theorem]{Claim}
\newtheorem*{claim*}{Claim}
\newtheorem{corollary}[theorem]{Corollary}
\newtheorem*{corollary*}{Corollary}
\theoremstyle{definition}
\newtheorem{definition}[theorem]{Definition}
\theoremstyle{remark}
\newtheorem{remark}[theorem]{Remark}
\newtheorem{example}[theorem]{Example}
\newcommand{\Z}{\mathbb{Z}}
\newcommand{\Q}{\mathbb{Q}}
\newcommand{\R}{\mathbb{R}}
\newcommand{\C}{\mathbb{C}}
\newcommand{\Pro}{\mathbb{P}}
\newcommand{\Aff}{\mathbb{A}}
\newcommand{\ol}[1]{\overline{#1}}
\newcommand{\wt}[1]{\widetilde{#1}}
\newcommand{\wh}[1]{\widehat{#1}}
\newcommand{\sprod}[2]{\left\langle #1,\,#2 \right\rangle}
\newcommand{\definedby}{\coloneqq}
\newcommand{\Coker}{\mathrm{Coker}\;}
\newcommand{\Proj}{\mathrm{Proj}}
\newcommand{\Hom}[3]{\mathrm{Hom}_{#1}\qty(#2,\,#3)}
\newcommand{\Ext}[4][]{\mathrm{Ext}^{#2}_{#1}\qty(#3,\,#4)}
\DeclareMathOperator{\Cone}{cone}
\DeclareMathOperator{\Conv}{conv}
\DeclareMathOperator{\Affine}{aff}
\DeclareMathOperator{\relint}{relint}
\DeclareMathOperator{\Spec}{Spec}
\title{On maximal rational polyhedral fans}
\author{Dan Edidin and Dillon Lisk}
\begin{document}

\maketitle
\begin{abstract}
    In this paper we study the geometry and combinatorics of the possible rational polyhedral fans with a given set of rays. 
    The main questions we consider are when such fans are projective, complete, or simplicial. To answer these questions we use techniques of invariant theory to characterize the quotients of maximal saturated open sets in torus representations.
\end{abstract}
\section{Introduction}
The purpose of this paper is to study the geometry and combinatorics of the possible rational polyhedral fans with a given set of rays.  Since there is a one-to-one correspondence between such fans in $\R^n$ and $n$-dimensional toric varieties, this is equivalent to understanding the geometry of all the toric varieties in a certain family corresponding to the choice of rays.  To characterize the toric varieties whose fans have the same rays, we recall the construction, due to Cox, that realizes $n$-dimensional toric varieties as quotients of an open subset of $\Aff^s$ by a diagonalizable group of rank $s-n$.  Fixing the rays in the fan corresponds to fixing the diagonalizable group  action and allowing the open subset to vary.  This allows us to use equivariant methods to study this problem.

Key properties that we focus on are whether a fan is
\begin{itemize}
    \item \textit{complete}: the union of all its cones is all of $\R^n$,
    \item \textit{projective}: the fan is the normal fan of a rational polytope, and
    \item \textit{simplicial}: every cone $\sigma$ is generated by exactly $\dim{\sigma}$ rays,
\end{itemize}
since these relate directly to the geometry of the corresponding toric variety.

It follows from the definitions that every projective fan is complete, and for a fan to be complete it is clearly necessary 
that every vector in $\R^n$ is a conical linear combination 
of the rays $\rho_1, \ldots , \rho_s$; i.e.  $\Cone(\rho_1, \dots, \rho_s) = \R^n$.  We will thus restrict to the case where this condition holds on the rays, and we will further restrict to fans that are maximal with respect to inclusion, among the fans on our fixed set of rays.  Since inclusions $\Sigma \subset \Sigma'$ correspond to open inclusions between the corresponding toric varieties, to understand the geometry of all fans it suffices to understand the geometry of the maximal ones.  More precisely:
\begin{definition}\label{definition-maximal}
Given a set $R$ of rays, a fan $\Sigma$ is called \textit{maximal with respect to $R$} if it is maximal, with respect to set inclusion, among all fans $\Sigma'$ such that $\Sigma'(1) \subseteq R$.  Note that we do not require equality, so not every ray in $R$ need occur in $\Sigma$.
\end{definition}

In low dimensions, these questions are very easy to answer.  If $n = 1$, there is only one suitable collection of rays -- the two half-lines -- and only one maximal fan, consisting of both rays and $\{0\}$.  This corresponds to the toric variety $\Pro^1$  which is projective and simplicial.  In dimension 2, every spanning set of rays has a unique maximal fan, and that fan is likewise projective and simplicial.

However, the story changes in dimension 3, as there are well-known examples of complete, simplicial, and non-projective toric varieties (see, for example, \cite[p. 71]{fulton1993toric}), and our results will show even more interesting behavior in dimensions $\ge 4$.


\begin{theorem}\label{all-maximal-fans-complete}
In every dimension $n \ge 3$, there exists a set $R$ of rays in $\R^n$ such that every fan in $\R^n$ maximal with respect to $R$ is complete, with both projective and non-projective fans occurring.  In both the projective and non-projective cases, both simplicial and non-simplicial fans occur.
\end{theorem}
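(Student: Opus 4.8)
The plan is to reduce the theorem to an explicit three‑dimensional construction, after first noting that, under the standing hypothesis $\Cone(R)=\R^n$, a fan is maximal if and only if it is complete. So the first step is to prove: \emph{if $\Cone(R)=\R^n$, then every maximal fan $\Sigma$ with $\Sigma(1)\subseteq R$ is complete.} (The converse is automatic: distinct cones of a fan have disjoint relative interiors, so if $\Sigma$ is complete and $\Sigma\subseteq\Sigma'$, then each cone of $\Sigma'$ meets the relative interior of a cone of $\Sigma$ and hence equals it, forcing $\Sigma'=\Sigma$.) For the nontrivial direction: $|\Sigma|$ is a union of cones, so it is stable under positive scaling, and therefore for each $\rho\in R$ the ray $\R_{\ge0}\rho$ either lies in $|\Sigma|$ or meets it only at $0$; in the latter case $\Sigma\cup\{\R_{\ge0}\rho\}$ is again a fan (each intersection with a cone of $\Sigma$ is the face $\{0\}$), so $\Sigma$ is not maximal. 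Hence a maximal $\Sigma$ contains the ray through every element of $R$, so $|\Sigma|\supseteq\Cone(R)=\R^n$. Thus it suffices, for each $n\ge3$, to produce one spanning set $R$ carrying \emph{complete} fans of all four types: projective simplicial, projective non‑simplicial, non‑projective simplicial, non‑projective non‑simplicial; being complete, these are automatically maximal.

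The second step is to reduce to $n=3$ by multiplying with $\Pro^1$. Given such an $R_3\subseteq\R^3$ with witnessing complete fans $\Sigma_1^{(3)},\dots,\Sigma_4^{(3)}$, put $R_n:=R_3\cup\{\pm e_4,\dots,\pm e_n\}$ inside $\R^3\times\R^{n-3}=\R^n$; this spans. Let $\Delta$ be the complete fan of $\Pro^1$ and set $\Sigma_i^{(n)}:=\Sigma_i^{(3)}\times\Delta^{n-3}$. These are complete, with rays in $R_n$. A product of fans is projective (resp.\ simplicial) when each factor is, while $X\times(\Pro^1)^{n-3}$ is projective only if $X$ is (it contains $X$ as a closed subvariety) and the cone $\sigma\times\R_{\ge0}^{\,n-3}$ is non‑simplicial whenever $\sigma$ is. So $\Sigma_i^{(n)}$ has the same type as $\Sigma_i^{(3)}$, and the $n$‑dimensional statement follows from the three‑dimensional one.

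For the base case $n=3$, the plan is to let $R_3$ be the set of rays through the vertices of a suitably chosen $3$‑polytope $P$ with $0$ in its interior (so $\Cone(R_3)=\R^3$), and to invoke the standard dictionary: complete fans with rays in $R_3$ refining the face fan of $P$ are precisely the cones from the origin over the polyhedral subdivisions of $\partial P$ using only vertices of $P$; such a fan is simplicial exactly when the subdivision is a triangulation, and projective exactly when the subdivision is regular (induced by a height function on the vertices). I would choose $P$ so that $\partial P$ carries a non‑regular triangulation $T$ using the vertices of $P$; since a simplicial polytope has a unique such triangulation, this forces $P$ to have non‑simplex facets, which is exactly what the non‑simplicial cases want. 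Concretely, arranging several quadrilateral facets of $P$ in a cyclic ``spinning'' pattern yields such a $T$ (this is the mechanism behind the complete simplicial non‑projective threefold of \cite[p.~71]{fulton1993toric}). Then: (1) independently triangulating each facet polygon of $P$ gives a regular triangulation, hence a projective simplicial fan; (2) the face fan of $P$ is the normal fan of the polar dual $P^\circ$, hence projective, and it is non‑simplicial because $P$ has a non‑simplex facet; (3) $T$ gives a non‑projective simplicial fan; and (4) coarsening $T$ by merging two coplanar adjacent triangles lying in a common non‑simplex facet into a single quadrilateral cell gives a non‑triangulation $T'$ which is still non‑regular---were $T'$ induced by heights $\omega$, a small perturbation of $\omega$ at the four vertices of the merged cell would refine $T'$ back to $T$---hence a non‑projective non‑simplicial fan.

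The main obstacle is the step in the base case where one must produce the explicit polytope $P$ and \emph{certify} non‑regularity of the triangulation $T$: one has to show that no height function on the vertices of $P$ reproduces $T$, which is done by a finite certificate---the cyclic arrangement of non‑simplex facets is chosen so that the local diagonal choices force an inconsistent system of linear inequalities among the heights. The remaining points (that such a $P$ with a non‑simplex facet exists, that the merge in case (4) is legitimate, and that $0$ may be placed in the interior of $P$ by a translation) are routine bookkeeping.
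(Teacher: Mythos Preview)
Your first step is wrong, and the error is fatal for the whole strategy. From ``a maximal $\Sigma$ contains the ray through every element of $R$'' you conclude ``$|\Sigma|\supseteq\Cone(R)=\R^n$''. This is a non-sequitur: $|\Sigma|$ is the \emph{union} of the cones of $\Sigma$, not the cone they generate, and containing each ray of $R$ in no way forces the support to contain their conical hull. The claim itself---that $\Cone(R)=\R^n$ forces every maximal fan to be complete---is false in general: Theorem~\ref{all-maximal-nonprojective-fans-noncomplete} of this very paper exhibits ray sets in $\R^4$ with maximal \emph{non}-complete fans, and an explicit such fan is written down in Section~5.1. (Theorem~\ref{3d-fans-complete}, which you may have had in mind, is a nontrivial three-dimensional result proved by a planar triangulation argument, and even that carries an extra hypothesis.)

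Once this step goes, the rest of your plan only produces four specific complete fans of the desired types; it does not show that \emph{every} maximal fan on $R$ is complete, which is the content of the first clause of the theorem. The paper handles this as follows. For $n=3$ it takes an explicit set of six rays and, via the Bia{\l}ynicki-Birula--\'{S}wi\polhk ecicka characterization of good quotients, enumerates \emph{all} $87$ maximal open subsets by computer; each of the corresponding fans is then verified to be complete, with all four (projective/non-projective)$\times$(simplicial/non-simplicial) combinations occurring (Proposition~\ref{dim3-rays}). For $n>3$ it proves a structural lemma (Corollary~\ref{maximal-fans-in-product}): if the ray set splits as $R_1\cup R_2$ with $R_1\subset N_\R$, $R_2\subset N'_\R$, then \emph{every} maximal fan on $R_1\cup R_2$ is a product $\Sigma_1\times\Sigma_2$ of maximal fans on $R_1$ and $R_2$. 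Your product step only checks that the four particular product fans are complete; to conclude the theorem you would additionally need this lemma (or something like it) to rule out maximal non-product fans on $R_n$. So the missing ingredients are: (i) a proof, for your chosen $R_3$, that \emph{all} maximal fans on $R_3$ are complete---this is genuinely ray-set dependent and cannot be dispatched by the general argument you gave; and (ii) the product-fan lemma for the inductive step.
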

\begin{theorem}\label{all-maximal-nonprojective-fans-noncomplete}
In every dimension $n \ge 4$, there exists a set $R$ of rays in $\R^n$ such that every fan in $\R^n$ maximal with respect to $R$ is either projective or non-complete, with both possibilities occurring.  In both the projective and non-projective cases, both simplicial and non-simplicial fans occur.
\end{theorem}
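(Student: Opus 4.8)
The plan is to run everything through the Cox realization recalled in the introduction. Fix a spanning set of rays $\rho_1,\dots,\rho_s\in\R^n$, let $0\to M\to\Z^s\to \mathrm{Cl}\to 0$ be the divisor-class sequence they determine, and let $G=\Hom{}{\mathrm{Cl}}{\C^*}$ be the associated diagonalizable group (of rank $s-n$) acting diagonally on $\Aff^s$. Since $\Cone(\rho_1,\dots,\rho_s)=\R^n$, the only $G$-invariant regular functions on $\Aff^s$ are the constants, so every GIT quotient $\Aff^s/\!\!/_\chi G$ is \emph{projective}. Feeding this into the characterization, developed in this paper, of maximal saturated open subsets of a torus representation: the maximal fans on $R$ correspond to the maximal $G$-invariant saturated open subsets $U\subseteq\Aff^s$ admitting a good quotient; those of GIT type $U=(\Aff^s)^{\mathrm{ss}}(\chi)$ give exactly the projective maximal fans; and such a fan is simplicial iff $\chi$ lies in a full-dimensional chamber of the GIT fan in $\mathrm{Cl}_\R\cong\R^{s-n}$, a wall giving a projective non-simplicial fan. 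So it is enough to produce a configuration $R$ for which: (a) the GIT fan has a wall interior to its support, so that both kinds of projective maximal fans occur; (b) some maximal $U$ is not of GIT type, with both a simplicial and a non-simplicial example; and (c) every complete maximal fan on $R$ is projective, so that the non-GIT maximal fans from (b) are precisely the non-complete ones.

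Property (c) I would obtain essentially for free by keeping $s-n$ small, say $s\le n+3$. Any complete fan on $R$ uses a spanning subset $R'\subseteq R$ and therefore has Picard number $|R'|-n\le 3$; by the classification of complete toric varieties of small Picard number (Kleinschmidt, Sturmfels), all of which are projective — or, if one prefers, by a direct check exploiting the short divisor-class sequence — every such fan is the normal fan of a polytope. Thus the whole problem reduces to choosing the $s\le n+3$ rays so that (a) and (b) hold. For (a) it suffices to put the classes of the rays in $\mathrm{Cl}_\R$ in sufficiently general position that the GIT fan acquires several chambers separated by interior walls, which is an open dense condition on the configuration.

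The heart of the matter, and the step I expect to be the main obstacle, is (b): arranging the rays so that the family of maximal fans genuinely includes non-complete ones — both a simplicial and a non-simplicial one. The plan is to build into $R$ a ``rigid gap'': a region of $\R^n$ that some fan on a proper subset of $R$ fails to cover, and such that no cone on the remaining rays of $R$ can be adjoined to it without violating strong convexity or the face conditions, so that the resulting fan is maximal yet non-complete. On the quotient side this is precisely a maximal saturated open set with a good but non-proper quotient that is not of GIT type, and such phenomena are available once $G$ has rank $\ge 2$; by leaving two compatible choices of how to fill part of the gap one produces both a geometric (simplicial) and a non-geometric (non-simplicial) such fan. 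Making this precise requires writing down an explicit six- or seven-ray configuration in $\R^4$ and classifying its maximal fans, checking that the only proper quotients are the GIT ones and that the remaining maximal quotients are non-proper and realized in both flavors; this bookkeeping is feasible exactly because $s-n$ is small. It is also here that the hypothesis $n\ge 4$ enters: in $\R^3$, for spanning configurations with so few rays one cannot create such a rigid gap, so all maximal fans turn out complete and the ``non-complete'' case of the statement disappears.

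Finally, to pass from the base case $n=4$ to every $n\ge 4$, I would suspend: replace $R\subset\R^n$ by $R\times\{0\}\ \cup\ \{e_{n+1},-e_{n+1}\}\subset\R^{n+1}$, which still spans as a cone. A cone of any fan on the suspended configuration uses at most one of $\pm e_{n+1}$ (two of them would break strong convexity), which forces every maximal fan on the suspension to be a product of a maximal fan on $R$ with the fan of $\Pro^1$. Since a product with $\Pro^1$ is projective, complete, or simplicial exactly when the first factor is, properties (a), (b) and (c) propagate from $\R^n$ to $\R^{n+1}$, and hence by induction to all dimensions $\ge 4$.
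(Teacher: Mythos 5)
Your overall architecture --- the Cox/Gale-dual translation, an explicit seed configuration in $\R^4$, and suspension by $\pm e_{n+1}$ to reach all $n\ge 4$ --- is the same as the paper's, and your suspension step is essentially the paper's product-fan argument (every maximal fan on the suspended ray set is the product of a maximal fan on $R$ with the fan of $\Pro^1$, and projectivity, completeness, and simpliciality are each preserved and reflected by such products). The problem is that the step you yourself call ``the heart of the matter'' is never carried out: you do not exhibit a configuration in $\R^4$ and verify that its maximal fans include non-complete ones of both simplicial and non-simplicial type while every complete maximal fan is projective. The paper's proof of the base case \emph{is} exactly this verification: a specific list of seven ray generators in $\R^4$, the associated weight matrix, and a complete enumeration (via the combinatorial criterion of Section \ref{BBS}) of all $112$ non-degenerate maximal open sets, of which $85$ are projective ($36$ simplicial, $49$ not) and $27$ are non-complete ($8$ simplicial, $19$ not). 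Your ``rigid gap'' heuristic is a reasonable design principle, but as written the proposal reduces the theorem to an unproved existence claim.

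Moreover, the shortcut you propose for property (c) is false. You argue that keeping $s\le n+3$ forces every complete fan on $R$ to be projective because its Picard (class group) rank is at most $3$, citing Kleinschmidt--Sturmfels. That theorem applies to \emph{smooth} complete toric varieties; for merely complete --- even simplicial --- fans it fails already at $s=n+3$: Proposition \ref{dim3-rays} of this paper exhibits six rays in $\R^3$ supporting fourteen complete non-projective maximal fans (two of them simplicial), and the paper notes that $d+2$ is the optimal bound of the form $d+i$ (Theorem \ref{rank2-tori-projective} handles rank two, and only rank two). For the seven-ray configuration in $\R^4$ the implication ``complete $\Rightarrow$ projective'' does hold, but only as an outcome of the case-by-case computation, not as a consequence of a general small-rank principle; so (c) must also be checked explicitly for whatever configuration you construct.
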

Thus, while there are special examples where these three concepts coincide, there are, in higher dimensions, examples where the three concepts behave quite differently.  The gap between the dimension bounds in these two theorems suggests that everything behaves as well as one can hope in dimension three. To that end we prove the following theorem:
\begin{theorem}\label{3d-fans-complete}
  Let $\Sigma$ be a maximal fan in $\R^3$.  If $\ol{\R^3\setminus\mathrm{supp}\,\Sigma}$ is
  contained in an open half-space then $\Sigma$ is complete.
\end{theorem}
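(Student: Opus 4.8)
The plan is to argue by contradiction, using maximality to fill in any hole in the support. Suppose $\Sigma$ is not complete, so that $\Omega \definedby \R^3 \setminus \supp\Sigma$ is a nonempty open set; since $\supp\Sigma$ is a union of cones it is stable under multiplication by positive scalars, and hence so is $\Omega$. A nonempty scaling-stable set cannot lie in an affine open half-space $\{x : \langle u,x\rangle < c\}$ unless $c > 0$, and in that case applying the inequality to all positive multiples of a point of the set forces it into the closed \emph{linear} half-space $\{x : \langle u,x\rangle \le 0\}$. Thus the hypothesis $\ol\Omega \subseteq \{\langle u,x\rangle < c\}$ gives $\Omega \subseteq \{\langle u,x\rangle \le 0\}$, and since $\Omega$ is open in fact $\Omega \subseteq \{\langle u,x\rangle < 0\}$; equivalently $\supp\Sigma \supseteq \{\langle u,x\rangle \ge 0\}$. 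In particular $\supp\Sigma$ is full-dimensional and the hole is confined to an open half-space.

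Next I analyze the boundary of the hole and reduce to a two-dimensional problem. Since every cone of $\Sigma$ is a finite intersection of rational linear half-spaces, $\supp\Sigma$ — and hence $\ol\Omega = \ol{\R^3 \setminus \supp\Sigma}$ — is a finite union of rational polyhedral cones, and $\partial(\supp\Sigma) = \partial\Omega = \supp\Sigma \cap \ol\Omega$ is a subcomplex of $\Sigma$ consisting of cones of dimension at most $2$; in particular every ray appearing in it lies in $\Sigma(1)$. Intersecting with the unit sphere $S^2$, the hole becomes a region $\omega$ whose closure is a compact subset of the closed hemisphere $\{\langle u,x\rangle \le 0\} \cap S^2$, bounded by finitely many geodesic arcs meeting at the points of $S^2$ corresponding to the rays of $\Sigma$ in $\partial(\supp\Sigma)$ — that is, $\ol\omega$ is a finite spherical polygonal region, possibly disconnected and possibly with holes. (Working on $S^2$ rather than on an affine hyperplane avoids any difficulty in the degenerate case where $\ol\Omega$ meets the bounding hyperplane $\{\langle u,x\rangle = 0\}$.)

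The crucial step is to triangulate $\ol\omega$ into geodesic triangles using \emph{only} the vertices already present on its boundary, in such a way that the triangulation restricts on $\partial\omega$ to the given subdivision into arcs. This is the classical fact that a planar (here: spherical, inside a hemisphere) polygonal region, possibly disconnected and possibly with holes, admits a triangulation without introducing new vertices; it is exactly here that $\dim = 3$ is essential, since in higher dimensions Schönhardt-type polyhedra show that a region need not be triangulable using its vertices alone. Coning off such a triangulation produces finitely many rational polyhedral cones that subdivide $\ol\Omega$, have all their rays in $\Sigma(1)$, and meet $\supp\Sigma$ only along $\partial(\supp\Sigma)$ in the subcomplex already belonging to $\Sigma$; adjoining these cones and all of their faces to $\Sigma$ therefore yields a fan $\Sigma'$ with $\Sigma'(1) = \Sigma(1)$ strictly containing $\Sigma$ (it has a full-dimensional cone whose relative interior meets $\Omega$), contradicting the maximality of $\Sigma$. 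I expect the only real obstacle to be this triangulation step, together with the routine verification that the new cones meet the old ones in common faces so that $\Sigma'$ is genuinely a fan; the first step and the description of $\partial(\supp\Sigma)$ as a subcomplex are elementary.
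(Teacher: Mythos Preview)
Your approach is essentially the same as the paper's: both slice the complement of the support to a two-dimensional polygonal region (you use the sphere $S^2$, the paper uses the affine hyperplane $H_{v,1}$), triangulate that region using only the vertices already present, and cone back to contradict maximality. The only substantive difference is one of emphasis: the paper spends most of its effort \emph{proving} the triangulation-without-new-vertices result for bounded planar polygonal regions (including a rather delicate argument that inside a simple polygon one can always find an internal diagonal to a vertex), whereas you invoke it as a classical fact --- which is fair, as it is indeed standard.
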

We believe that the condition ``$\ol{\R^3\setminus\mathrm{supp}\,\Sigma}$ is contained in some open half-space'' can be removed, but we have not yet been able to prove this.  Even so, our key example for Theorem \ref{all-maximal-nonprojective-fans-noncomplete} satisfies the condition $\ol{\R^3\setminus\mathrm{supp}\,\Sigma}$, so Theorem \ref{3d-fans-complete} cannot extend to higher dimensions.

It is also reasonable to suspect that things behave better if the number $s$ of rays is close to the dimension $n$ -- i.e., 
the rank of the class group of the associated toric variety is small. In that direction we have the following theorem.
\begin{theorem}\label{rank2-tori-projective}
Any complete toric variety $X$ of dimension $d$ whose fan has $d+2$ rays is projective.
\end{theorem}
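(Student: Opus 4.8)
The plan is to realize $X$ as a GIT quotient of an affine space and exploit that such quotients are automatically projective. Write $X=X_\Sigma$ with $\dim X=d$ and $\Sigma(1)=\{\rho_1,\dots,\rho_s\}$, $s=d+2$. Since the $\rho_i$ span $N_\R$, the class group $\Cl(X)$ has rank $s-d=2$, and Cox's construction presents $X$ as $(\Aff^s\setminus Z_\Sigma)/G$, where $G=\Hom{}{\Cl(X)}{\C^*}$ is diagonalizable of rank $2$, acting on $\Aff^s$ with weights $\beta_i=[D_i]\in\Cl(X)$, and $Z_\Sigma=V(B_\Sigma)$ for the irrelevant ideal $B_\Sigma=\gen{\,\prod_{i\in\hat\sigma}x_i \,:\, \sigma\in\Sigma\,}$, $\hat\sigma:=\{i:\rho_i\notin\sigma\}$. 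Because $\Sigma$ is complete the $\rho_i$ positively span $N_\R$; dualizing the Cox sequence $0\to M\to\Z^s\to\Cl(X)\to 0$, this says that the Cox ring $S=\C[x_1,\dots,x_s]$ has no nonconstant $G$-invariants, equivalently that $\mathrm{Eff}(X)=\Cone(\beta_1,\dots,\beta_s)\subseteq\Cl(X)_\R\cong\R^2$ is a strongly convex planar cone — a ``wedge'' of angle strictly between $0$ and $\pi$. It therefore suffices to produce a rational class $\theta$ with $X_\Sigma\cong\Aff^s/\!\!/_\theta G:=\Proj\bigoplus_{k\ge 0}S_{k\theta}$: such a quotient is projective over $\Spec S^G=\Spec\C$, hence a projective variety.

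Next I would describe the possible quotients $\Aff^s/\!\!/_\theta G$ for $\theta\in\mathrm{Eff}(X)$. Order the $\beta_i$ by their angle inside the wedge $\mathrm{Eff}(X)$; because we are in the plane, the ``GIT walls'' are just the rays $\R_{\ge 0}\beta_i$ and the chambers the open sectors between consecutive ones. A direct computation of the semistable locus shows that for $\theta$ in the open sector between $\R_{\ge 0}\beta_j$ and $\R_{\ge 0}\beta_{j+1}$ one obtains the toric variety whose two primitive collections are the angular segments $\{1,\dots,j\}$ and $\{j+1,\dots,s\}$, while for $\theta$ on the wall $\R_{\ge 0}\beta_k$ one obtains the variety whose primitive collections are $\{i:\angle\beta_i\le\angle\beta_k\}$ and $\{i:\angle\beta_i\ge\angle\beta_k\}$ (overlapping in the ``tie-block'' of those $\beta_i$ parallel to $\beta_k$). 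So the goal reduces to the purely combinatorial assertion that the primitive collections of an arbitrary complete fan on the rays $\rho_i$, with $s=d+2$, form exactly such a pair: two angular segments of the Gale dual $\{\beta_i\}$, one initial and one final, whose union is all of $\{1,\dots,s\}$ and whose overlap, if nonempty, is a single tie-block.

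The easy half of this is that every primitive collection $P$ is an angular segment: by the standard primitive-relation property of complete fans, $\sum_{i\in P}\rho_i$ lies in the relative interior of a cone $\sigma_P\in\Sigma$ with $\sigma_P(1)\cap P=\emptyset$, and the resulting relation $\sum_i c_i\rho_i=0$ — with $c_i>0$ for $i\in P$, $c_i<0$ for $i\in\sigma_P(1)$, and $c_i=0$ otherwise — defines, via the identification of the space of relations among the $\rho_i$ with the dual of $\Cl(X)_\R$, a linear functional $\ell_P$ on $\Cl(X)_\R$ with $P=\{i:\ell_P(\beta_i)>0\}$; since the $\beta_i$ all lie in a wedge of angle $<\pi$, any such set is an initial or a final segment. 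As primitive collections are pairwise incomparable, there is at most one of each kind. The remaining claims — that there are exactly two, that they cover everything, and that any overlap is a single tie-block — are where completeness and $s=d+2$ are genuinely used, via Gale-dual bookkeeping on the planar configuration $\{\beta_i\}$: a ray lying in neither primitive collection would have to belong to every maximal cone of the complete fan, which is impossible; and an overlap spanning two tie-blocks would produce two non-simplicial maximal cones incompatible with $\Sigma$ being a fan. Establishing this structure is the main obstacle. The fully simplicial case — exactly when the two angular segments are disjoint — is already the classical fact that complete simplicial toric varieties of Picard number two are projective, and can be cited. Finally, this is sharp: in codimension $s-d\ge 3$ a complete fan need not be a GIT quotient, which is precisely what makes the non-projective complete fans of Theorems \ref{all-maximal-fans-complete} and \ref{all-maximal-nonprojective-fans-noncomplete} possible.
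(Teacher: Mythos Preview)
Your route is genuinely different from the paper's. The paper never touches the secondary fan or primitive collections; instead it argues by pure convex geometry. It splits into the simplicial case (cited, as you do) and the non-simplicial case, and for the latter observes that a non-simplicial complete fan with $d+2$ rays must contain a cone $\sigma$ with exactly $d+1$ rays. It then slices $\sigma$ by a rational hyperplane to get a lattice polytope $K$, lets $u_\rho$ be the sole remaining ray generator, sets $P=\Conv(K\cup\{u_\rho\})$, and shows $\Sigma$ is the face fan of $P$, hence the normal fan of the polar dual $P^*$. This is elementary and short; the only real inputs are that $0$ lies in the interior of $P$ and that every cone of $\Sigma$ is the cone over a proper face of $P$.

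Your plan is a reasonable strategy, but it has a genuine gap exactly where the new content lives. You yourself flag the ``main obstacle'': proving that the primitive collections of an arbitrary complete fan with $d+2$ rays are precisely an initial and a final angular segment with overlap at most one tie-block. You sketch two reasons (a ray in neither collection would lie in every maximal cone; a two-block overlap would force incompatible non-simplicial cones), but neither is carried out, and the second in particular needs a careful argument. Moreover, your ``easy half'' leans on the primitive-relation identity $\sum_{i\in P}u_i\in\relint\sigma_P$ with $\sigma_P(1)\cap P=\emptyset$ and strictly signed coefficients; this is standard for simplicial fans, but in the non-simplicial case the expression of $\sum_{i\in P}u_i$ in terms of the rays of $\sigma_P$ is not unique, and the disjointness $\sigma_P(1)\cap P=\emptyset$ and strict positivity you need require justification. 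Since the simplicial case is exactly what you (and the paper) cite, the upshot is that your argument is incomplete precisely on the non-simplicial fans that constitute the theorem's new content. The paper's big-cone argument sidesteps all of this: once you know there is a cone with $d+1$ rays, the polytope $P$ and its face fan do all the work, with no combinatorics of primitive collections required.
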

This was already proven  in the case that the fan is simplicial
\cite{kleinschmidt1991smooth,acampo2003orbit}, and we prove the non-simplicial case here.

The smallest example we know of a complete, non-projective toric variety has 6 rays in $\R^3$, so the bound $d + 2$ is the best bound of the form $d + i$ in theorem \ref{rank2-tori-projective}.  However, our example for Theorem \ref{all-maximal-nonprojective-fans-noncomplete} is based on a set of 7 rays in $\R^4$, all of whose maximal complete fans is projective, so there may be less restrictive optimal bounds for $s$ in higher dimensions.

Our method for proving Theorems \ref{all-maximal-fans-complete} and \ref{all-maximal-nonprojective-fans-noncomplete} is to use the Cox construction to reduce to enumerating all the subsets of $\Aff^s$ having a good quotient with respect to the action of a fixed diagonalizable group.  Theorems \ref{3d-fans-complete} and \ref{rank2-tori-projective} rely on various results on convexity.

\section{Background}

\subsection{Quotients in Algebraic Geometry}
In this section we discuss quotients by reductive groups for schemes defined over an algebraically closed field. For simplicity of exposition, we assume that our groups are also {\em linearly reductive}. In characteristic 0, any reductive group (i.e., a group having trivial unipotent radical) is linearly reductive. In characteristic $p$, a connected algebraic group is linearly reductive if and only if it is a torus. Since actions of tori are the main focus of this paper, this is not a restricting assumption.

\begin{definition}\label{def.goodquotient}
If a linearly reductive group $G$ acts on a scheme $X$, then a  $G$-invariant morphism $\pi: X \to Y$ is a {\em good quotient} if 
\begin{enumerate}
    \item $\pi$ is affine, and
    \item The natural map ${\mathcal O}_Y \to \pi_*({\mathcal O_X})^G$ is an isomorphism.
\end{enumerate}
In this case, we write $Y = X/G$.    
\end{definition}
\begin{remark}
    Definition \ref{def.goodquotient} is equivalent to the requirement that $Y$ be the good moduli space of the quotient stack $[X/G]$.
    By \cite[Theorem 6.6 ]{alper2012good}, a good quotient is universal for 
    $G$-invariant maps to algebraic spaces, i.e.,
    whenever $\varphi: X \to Z$ is $G$-invariant, there exists a unique morphism $\wt{\varphi}: Y \to Z$ such that $\varphi = \wt{\varphi}\circ\pi$.  In particular, good quotients are unique up to unique isomorphism whenever they exist.
\end{remark}
\begin{remark}
Following \cite{bialynicki-birula1996open, @bialynickibirula1998recipe} we will require that if $X$ is separated then the good quotient $Y$ is also also separated. In this paper we will be considering quotients of open sets in $\Aff^n$, so we necessarily assume that our good quotients are separated schemes and -- since $\Aff^n$ is integral -- in fact, varieties.  There are simple examples of non-separated good quotients, however: consider the action of $\mathbb{G}_m$ on $\Aff^2$ given by the weights $1, -1$, and let $U = \Aff^2 \setminus \{0\}$.  Then the good quotient of $U$ by $\mathbb{G}_m$ is the affine line with two origins, which is not separated.
\end{remark}

Even when good quotients exist, there need not be a bijective correspondence between orbits and points of the quotients. Accordingly we make the following defition:
\begin{definition}
    A $G$-invariant map $\pi: X \to Y$ is a \textit{geometric quotient} if it is a good quotient and, for each algebraically closed field $K$, the $K$-valued points of $Y$ are in one-to-one correspondence with the $G$ orbits of the $K$-valued points of $X$.
\end{definition}


One celebrated source of good quotients, due to Mumford \cite{GIT}, is geometric invariant theory, which we briefly sketch.  
If $L \to X$ is a line bundle on a scheme $X$ with a $G$-action, a \textit{$G$-linearization} of $L$ is a $G$-action on $L$ such
that the projection $L \to X$ is $G$-equivariant (see \cite[Definition 1.6]{GIT} for an equivalent definition in terms of invertible sheaves).  In particular, a $G$-linearization of $L$ induces an action on the global sections of $L$, so we can speak of $G$-invariant sections of $L$.


If $X$ is equipped with a $G$-linearized line bundle $L$, then we say a point $x \in X$ is
\begin{enumerate}
    \item \textit{Semistable} if there is a $G$-invariant global section $s$ of $L^n$ for some $n$ such that $X_s$ is affine and $x \in X_s$, and
    \item \textit{Stable} if, moreover, the action of $G$ on $X_s$ is closed--that is, the orbits of geometric closed points are closed.
\end{enumerate}
We let $X^{ss}$ and $X^s$ denote the open subsets of semistable (respectively stable) points of $X$ (which depend on both $L$ and the choice of linearization).  Then the main result of geometric invariant theory is that, when $G$ is a reductive group, a good quotient $\pi: X^{ss} \to X^{ss}/G$ exists, $X^{ss}/G$ is quasi-projective, and $\pi|_{X^s}: X^s \to X^s/G$ is a geometric quotient.  One writes $X \sslash G$ for $X^{ss}/G$.  If, furthermore, $X$ is projective and $L$ is ample, then $X\sslash G$ is projective.  However, it is known that not all good quotients are quasiprojective, so, in particular, not all good quotients arise via this construction.

We are, therefore, interested in classifying all the ``essentially distinct'' (separated) good quotients $U/G$ that appear as $U$ ranges over the open subsets of $X$.  Now, supposing $U$ is an open subset of $X$ and that $U$ has a good quotient $U/G$ by $G$, we may produce more open subsets with good quotients by considering open sets of the form $\pi^{-1}(V)$  where $V \subset U/G$ is open.  We wish to restrict attention to the open subsets $U$ that \textit{cannot} be obtained in this trivial manner, and accordingly we recall that, given $G$-invariant open subsets $U \subseteq U' \subseteq X$, $U$ is said to be \textit{saturated} in $U'$ if, for every $x \in U$, the closure of the orbit $Gx$ remains closed in $U'$.  
\begin{definition}\cite[Definition 1.2]{bialynicki-birula1996open}
We call a $G$-invariant open set $U \subseteq X$ 
{\em $G$-maximal} (or {\em maximal} if $G$ is understood)
if a separated good quotient $U/G$ exists and $U$ is maximal among such open subsets with respect to the ``saturated in'' relation. \end{definition} 
It follows from the definition of good quotient that any open set of the form  $\pi^{-1}(V)$ is saturated in $U$ for all $V \subseteq P/G$, so we shall primarily restrict to studying the $G$-maximal open subsets of $X$.
\begin{remark}
If $X$ is irreducible and $U$ is a $G$-invariant open set such that
the quotient $U/G$ is complete then 
$U$ is necessarily $G$-maximal, but as we show in this paper
the converse need not hold. Also note that it is possible to have $G$-maximal open sets
$U$ and $U'$ with $U'$ a proper subset $U$ as we show in 
Example \ref{ex.reichstein}.
\end{remark}
\subsection{Toric Varieties}

The \textit{$n$-dimensional (complex) torus} is the group scheme
$\mathbb{G}_m^n$, where $\mathbb{G}_m = \Spec\;\C[t, t^{-1}]$ is the group of
units in $\C$ under multiplication.  If $T = \mathbb{G}_m^n$ is such a torus,
let $N$ denote the group of one-parameter subgroups of $T$, i.e., the set of all
group homomorphisms $\mathbb{G}_m \to T$ under pointwise multiplication, and let
$M$ denote the group of characters of $T$, i.e., group homomorphisms $T \to
\mathbb{G}_m$.  It is a standard fact that $N$ and $M$ are free abelian of rank
$n$, and that the pairing $N \times M \to \Z$ defined by $\langle \lambda, \chi
\rangle = e$ if $\chi \circ \lambda: z \mapsto z^e$ is a perfect pairing.

A \textit{toric variety} is an integral, normal, separated scheme of finite type
over $\Spec\C$ containing a torus $T$ as a dense, open subset, and equipped
with an action of $T$ that extends the natural action of $T$ on itself by
multiplication.

It is well known that there is a one-to-one correspondence between $n$-dimensional toric varieties and fans in $\R^n$.  One way of constructing the toric variety $X_\Sigma$ of a fan $\Sigma$ is to glue together the affine toric varieties $X_\sigma = \Spec(\C[\sigma^\vee \cap M])$ as $\sigma$ ranges over the cones of $\Sigma$.  An alternative construction, due to Cox, constructs the toric variety as a quotient in the following way.  For simplicity, we will assume that the support of $\Sigma$ is full dimensional; the general case simply involves crossing the result here with a torus of appropriate rank.  For each ray $\rho \in \Sigma(1)$, let $u_\rho$ be a generator of $\rho \cap N$.  We have a natural map $\beta: \Z^{\Sigma(1)} \to N$ given by $e_\rho \mapsto
u_\rho$, where $\{e_\rho\}$ is the set of basis vectors.  Then, since $\Sigma$ is full-dimensional, the $u_\rho$ span $N_\R$ and
$\Coker(\beta)$ is finite.  Dualizing in the category of abelian groups gives a short exact sequence
\begin{gather*}
  0 \to M \to \Z^{\Sigma(1)} \to \mathrm{Cl}(X) \to 0,
\end{gather*}
where $\mathrm{Cl}(X) = \Coker(\beta^*)$ will end up being the divisor class group of the toric variety.  Further applying the functor
$D(\text{---}) \definedby \Hom{}{\text{---}}{\mathbb{G}_m}$
gives a short exact sequence of diagonalizable groups
\begin{gather*}
  1 \to G \to T' \to T \to 1,
\end{gather*}
where $G = D(\mathrm{Cl}(X))$ and $T'$ is the torus of $\Aff^{\Sigma(1)}$, which
has character group $\Z^{\Sigma(1)}$.  Thus, $G$ acts on $\Aff^{\Sigma(1)}$ by
restricting the torus action to $G$.

For each cone $\sigma \in \Sigma$, let $x_{\hat\sigma} = \prod_{\rho \not\le
  \sigma}x_\rho$, and let $Z(\Sigma) = V(x_{\hat\sigma}: \sigma \in \Sigma)
\subseteq \Aff^{\Sigma(1)}$.  Then the action of $G$ on $\Aff^{\Sigma(1)}$ restricts to the open subset $U =
\Aff^{\Sigma(1)} \setminus Z(\Sigma)$, and we have that $X_\Sigma = U/G$.

We can characterize the fan in terms of the intrinsic toric variety
structure as follows. Recalling that $\mathbb{G}_m = \Spec\,\C[t, t^{-1}] \subseteq \Spec\,\C[t] =
\Aff^1$, we say that a one-parameter subgroup $\lambda \in N$ \textit{has a
  limit as $t \to 0$} if it extends to a map $\Aff^1 \to X_\sigma$.  Since
$X_\sigma$ is separated, this extension is unique provided it exists.  We write
$\lim_{t \to 0}\lambda(t) \in X_\sigma$ for the image of the closed point $(t
= 0) \in \Aff^1$ in this case.  Then we have the following: each $X_\sigma$ has a distinguished closed point $\gamma_\sigma$ such that, for every $\lambda
\in N$,
\begin{enumerate}
\item
  $u \in \sigma$ if and only if $\lim_{t \to 0}\lambda(t)$ exists, and

\item
  $u \in \relint(\sigma)$ if and only if $\lim_{t \to 0}\lambda(t) =
  \gamma_\sigma$.
\end{enumerate}
This allows
us to recover the fan $\Sigma$ from $X$ as $\Sigma = \{\sigma_\lambda: \lambda \in N,\; \lim_{t \to 0}\lambda(t) \text{ exists}\}$, where $\sigma_\lambda = \ol{\Cone\{\mu \in N: \lim_{t \to     0}\mu(t) = \lim_{t \to 0}\lambda(t)\}}$

There is another construction of toric varieties -- specifically projective toric varieties -- that we will need to make use of.  Suppose that $P \subseteq M_\R$ is a rational polytope in the character lattice $M$ of a torus $T$.  Then the set of
lattice points $P \cap M$ defines a map from $T \to \Aff^{s+1}$ where $s+1=|P \cap M|$. Moreover, since these are non-vanishing on
the torus there is an induced morphism $\varphi_P \colon T \to \Pro^s$.  It can be shown that for some value of $k$ the map $\varphi_{kP}$ defined by the lattice points in $kP$ is an embedding. We then define the projective toric variety $X_P$ to be the closure of $\varphi(T)$ in $\Pro^s$.  For $k$ sufficiently large this toric variety can be proven to be independent of $k$, and is equal to $X_\Sigma$, where $\Sigma$ is the normal fan of $P$ in $N$.  In particular, $X_P = X_{P + \mathbf{a}}$ for every $\mathbf{a} \in M_\R$.

\subsection{Toric GIT}

Consider the affine space $\Aff^s$, a toric variety with torus $S=\mathbb{G}_m^s$, and let $G$ be a subgroup of $S$.  Then $G$ acts  diagonally on $\Aff^s$, and we can specialize geometric invariant theory to this situation.  Every line bundle over affine space is trivial, and $G$-linearizations of the trivial bundle $\Aff^n\times\Aff^1 \to \Aff^s$ are all of the form $g(x,\lambda) = (gx, \chi(g)\lambda)$ for some character $\chi$ of $G$, so henceforth we will identify characters with $G$-linearizations without comment.

Just as in the Cox construction, the short exact sequence of groups
\begin{gather*}
    0 \to G \to S \to \wt{T} \to 0,
\end{gather*}
where $\wt{T} = S/G$, gives, upon taking characters, a short exact sequence of finitely-generated abelian groups
\begin{gather*}
    0 \to M \overset{\delta}{\to} \Z^s \overset{\gamma}{\to} \wh{G} \to 0,
\end{gather*}
where $\wh{G}$ is the character group of $G$ and $M$ the character lattice of $\wt{T}$.  The action of $G$ on $\Aff^s$ is entirely determined by the characters $\chi_i \definedby \gamma(e_i)$, where $e_i$ are the standard basis vectors in $\Z^s$.  Dualizing this, we get
\begin{gather*}
    0 \to \wh{G}^* \overset{\gamma^*}{\to} \Z^s \overset{\delta^*}{\to} N \to \Ext{1}{\wh{G}}{\Z},
\end{gather*}
which gives us a distinguished set $\nu_i = \delta^*(e_i)$ of element in $N$, the lattice of one-parameter subgroups of the torus $\wt{T}$.  We observe that $\Ext{1}{\wh{G}}{\Z}$ is torsion, so that, tensoring both sequences with $\R$, we obtain
\begin{gather*}
    0 \to M_\R \overset{\delta}{\to} \R^s \overset{\gamma}{\to} \wh{G}_\R \to 0,
\end{gather*}
and
\begin{gather*}
    0 \to \wh{G}^*_\R \overset{\gamma^*}{\to} \R^s \overset{\delta^*}{\to} N_\R \to 0.
\end{gather*}
We write $\beta_i$ for the image of $\chi_i$ in $\wh{G}_\R$ and note that, if $G$ is a torus, the map $\wh{G} \to \wt{G}_\R$ is injective, so we can identify $\chi_i$ with $\beta_i$.  Since $\wt{T}$ is always a torus, we will simply write $\nu_i$ for the image of $\nu_i$ in $N_\R$.  The sets $\{\beta_i\} \subseteq \wh{G}_\R$ and $\{\nu_i\} \subseteq N_\R$ obtained in this way are said to be \textit{Gale dual} to each other.  

Now take any $\chi \in \wh{G}$.  Let $\Z^s_{\ge 0} = \{(a_1,
\dots, a_s) \in \Z^s: a_i \ge 0\;\forall i\}$ be the positive orthant
and $\mathbf{a} = (a_1, \dots, a_s) \in \Z^s$ be any lift of $\chi$,
we have that $(\gamma^{-1}(\chi) \cap \Z^s_{\ge 0})-\mathbf{a}$ is in
the kernel of $\gamma$, hence in the image of $\delta$, so that
$\delta^{-1}((\gamma^{-1}(\chi) \cap \Z^s_{\ge 0})-\mathbf{a})
\subseteq M$ is well-defined.  It can be shown that the GIT quotient
corresponding to the $G$-linearization induced by $\chi$, denoted
$\Aff^s \sslash_\chi G$, is the toric variety associated to the
polyhedron $P_\mathbf{a} \definedby
\Conv(\delta^{-1}((\gamma^{-1}(\chi) \cap \Z^s_{\ge 0})-\mathbf{a}))
\subseteq M_\R$.  Note that, although $P_\mathbf{a}$ depends on the
choice of $\mathbf{a}$, the different $P_\mathbf{a}$ will be
translates of each other and so induce the same toric variety.

In terms of the Gale dual elements $\nu_i$, $P_\mathbf{a}$ may be more simply expressed as $\{m \in M_\R: \sprod{m}{\nu_i} \ge -a_i \;\forall i\}$.  This lets us characterize the $\chi$-semistable points directly in terms of the polytope $P$ as follows: define the \textit{$i$-th virtual facet} $F_i$ of $P$ as $F_i = \{m \in P: \sprod{m}{\nu_i} = -a_i\}$, and define the ideal $B = (\prod_{v \notin F_i}x_i: v \text{ is a vertex of }P)$.  Then $X^{ss} = \Aff^s \setminus Z(B)$.

The \textit{secondary fan} is a fan in $M$ that characterizes how the quotients $\Aff^s\sslash_\chi G$ vary as $\chi$ varies in $M$.  Take the cone $\Cone(\beta_1, \dots, \beta_s)$ and divide it along all the subcones $\Cone(\beta_{i_1}, \dots, \beta{i_t})$, $\{i_1, \dots, i_t\} \subseteq \{1, \dots, s\}$, such that the dimension of $\Cone(\beta_{i_1}, \dots, \beta{i_t})$ is strictly less than the dimension of $\Cone(\beta_1, \dots, \beta_s)$.  Though it is non-trivial to prove, this results in a fan, and we have the following results:
\begin{enumerate}
    \item $\Aff^s\sslash_\chi G$ is constant as $\chi$ varies over the relative interior of each cone of the secondary fan.
    \item $\Aff^s\sslash_\chi G = \emptyset$ for $\chi$ not in the support of the secondary fan.
    \item There are no $\chi$-stable points iff $\chi$ is on the boundary of the secondary fan, and both of these conditions imply that $\Aff^s\sslash_\chi G$ has dimension strictly less than $s - \dim G$. We call the quotient \textit{degenerate} if either of these equivalent conditions holds.
\end{enumerate}

\begin{remark}\label{rays-of-nd-GIT-quotients}
When $\chi$ is in the relative interior of the secondary fan, the corresponding GIT quotient will have torus $\wt{T}$ and have a fan whose rays are generated by some (but not necessarily all) of the $\nu_i$s.
\end{remark}

\subsection{Bia{\l}ynicki-Birula and \'{S}wi\polhk ecicka's Characterization of Good Quotients}\label{BBS}

In order to enumerate all the maximal open subsets with good
quotients, we utilize the results of
Bia{\l}ynicki-Birula and \'{S}wi\polhk ecicka
\cite{bialynicki-birula1996open}.  The relevant parts of
their theory are as follows.

Let $T$ be a torus with character lattice $M$ acting diagonally on $X = \Aff^n$
with weights $\chi_1, \dots, \chi_n$.  The convex polytopes of the form
$\mathrm{conv}(0, \chi_{i_1}, \dots, \chi_{i_s}) \subseteq M_\R$ where $\{i_1,
\dots, i_s\} \subseteq \{1, \dots, n\}$ are called \textit{distinguished
  polytopes}.  To each point $x = (x_1, \dots, x_n) \in \Aff^n$, we can
associate a distinguished polytope $P(x)$ via
\begin{gather*}
  P(x) = \mathrm{conv}(0, \chi_i: x_i \ne 0)
\end{gather*}
Moreover, each distinguished polytope has an associated subset
\begin{gather*}
  X(P) = \{x \in \Aff^n: P(x) = P\},
\end{gather*}
and it follows that $(x_1, \dots, x_n) \in X(P)$ if and only if $x_i \ne 0$ for
every vertex $\chi_i$ of $P$ and $x_j = 0$ for every $\chi_j \notin P$ (note
that there may be characters contained in $P$ that are not vertices of $P$).
Given a subset $Y \subseteq \Aff^n$, we set
\begin{gather*}
  \Pi(Y) = \{P(y): y \in Y\},
\end{gather*}
and conversely, if $\Pi$ is a collection of distinguished polytopes, then we set
\begin{gather*}
  X(\Pi) = \bigcup_{P \in \Pi} X(P).
\end{gather*}
A subset $Y$ of $\Aff^n$ is called \textit{combinatorially closed} if $Y =
X(\Pi(Y))$.  The relevant results of \cite{@bialynickibirula1998recipe} are:

\begin{theorem} \cite[Theorem 1.6]{@bialynickibirula1998recipe}
If $Y$ is a $T$-maximal subset of $\Aff^n$, then $Y$ is combinatorially closed.
\end{theorem}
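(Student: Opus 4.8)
The plan is to reduce the statement to the defining property of $T$-maximality. The first point to record is that \emph{every} set of the form $X(\Pi)$ is automatically combinatorially closed: using the definition $X(P) = \{x : P(x) = P\}$, one has $x \in X(P) \Rightarrow P(x) = P$, so $\Pi(X(\Pi)) = \Pi$ --- every distinguished polytope $P$ is realized, for instance by a point with $x_i \ne 0 \Leftrightarrow \chi_i \in P$ --- and therefore $X(\Pi(X(\Pi))) = X(\Pi)$. Hence the theorem is equivalent to the statement that a $T$-maximal set $Y$ satisfies $Y = X(\Pi(Y))$. The inclusion $Y \subseteq X(\Pi(Y))$ is immediate, since $y \in X(P(y))$ and $P(y) \in \Pi(Y)$. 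For the reverse inclusion the plan is to exhibit $X(\Pi(Y))$ as an admissible competitor in the definition of $T$-maximality: a $T$-invariant \emph{open} set, containing $Y$, carrying a separated good quotient, in which $Y$ is \emph{saturated}. If $Y$ were a proper subset of $X(\Pi(Y))$ this would contradict its $T$-maximality, so $Y = X(\Pi(Y))$.

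First I would check that $X(\Pi(Y))$ is open. Since $Y$ is $T$-invariant and open, $\Aff^n \setminus Y$ is the vanishing locus of a monomial ideal, hence a union of coordinate subspaces, so whether a point lies in $Y$ depends only on its support $\mathrm{supp}(x) = \{i : x_i \ne 0\}$, and the family of supports occurring in $Y$ is closed under enlargement. Since $P(x)$ likewise depends only on $\mathrm{supp}(x)$, the set $X(\Pi(Y)) = \{x : P(x) \in \Pi(Y)\}$ is again a union of coordinate strata, and it remains to see that its family of supports is closed under enlargement. Suppose $x \in X(\Pi(Y))$, say $P(x) = P(y)$ with $y \in Y$, and let $x'$ have support containing $\mathrm{supp}(x)$. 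Choosing $x''$ with $\mathrm{supp}(x'') = \mathrm{supp}(y) \cup \mathrm{supp}(x')$ we have $x'' \in Y$ (its support contains $\mathrm{supp}(y)$), and from $\mathrm{conv}(A \cup B) = \mathrm{conv}(\mathrm{conv}(A) \cup \mathrm{conv}(B))$, $P(x) = P(y)$, and $P(x) \subseteq P(x')$ (because $\mathrm{supp}(x) \subseteq \mathrm{supp}(x')$), we obtain $P(x'') = \mathrm{conv}(P(y) \cup P(x')) = \mathrm{conv}(P(x) \cup P(x')) = P(x')$. Hence $P(x') = P(x'') \in \Pi(Y)$, i.e.\ $x' \in X(\Pi(Y))$, and $X(\Pi(Y))$ is open.

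The heart of the argument is to show $X(\Pi(Y))$ carries a separated good quotient in which $Y$ is saturated. The natural route is to invoke the combinatorial description, due to \cite{bialynicki-birula1996open}, of the $T$-invariant open subsets of $\Aff^n$ that admit (separated) good quotients in terms of their collections of distinguished polytopes: the conditions characterizing the existence of a separated good quotient are conditions on the collection $\Pi(Y)$ alone, and since $\Pi(X(\Pi(Y))) = \Pi(Y)$ by the first paragraph, the combinatorially closed set $X(\Pi(Y))$ satisfies the same conditions and so also carries a separated good quotient. That $Y$ is saturated in $X(\Pi(Y))$ then amounts to checking that for $x \in Y$ whose orbit is closed in $Y$, that orbit is still closed in $X(\Pi(Y))$; this holds because every point of $X(\Pi(Y))$ has distinguished polytope in $\Pi(Y)$, so the orbits added in passing from $Y$ to $X(\Pi(Y))$ lie over points of $X(\Pi(Y))/T$ already in the image of $Y$, and no orbit of $Y$ can acquire a new closed orbit in its closure. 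Granting this, the $T$-maximality of $Y$ forces $Y = X(\Pi(Y))$, completing the proof.

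I expect this last step --- producing the separated good quotient of $X(\Pi(Y))$ and verifying that $Y$ is saturated in it --- to be the main obstacle; the reduction and the openness check are essentially formal. The good-quotient claim genuinely needs the structural theory, as it is precisely the assertion that the combinatorial datum $\Pi(Y)$, rather than the set $Y$ itself, controls whether a separated good quotient exists. If one does not wish to cite that theory wholesale, the alternative is to build the quotient of $X(\Pi(Y))$ directly: cover $X(\Pi(Y))$ by $T$-invariant principal opens $\Aff^n_f$, $f$ a monomial, on each of which an affine good quotient is visible, check that these glue to a scheme (separated, using separatedness of $Y/T$), and then verify the saturation criterion orbit by orbit; the combinatorial bookkeeping --- matching faces of the distinguished polytopes against the specialization order on $T$-orbits --- is where the genuine work is concentrated.
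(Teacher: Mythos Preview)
The paper does not give its own proof of this statement; it is quoted directly from \cite{@bialynickibirula1998recipe} without argument, so there is nothing in-paper to compare against. What follows is an assessment of your sketch on its own terms.

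Your overall strategy---exhibit $X(\Pi(Y))$ as a competitor and invoke maximality---is the right one, but the openness step contains a genuine error. You assert that ``since $Y$ is $T$-invariant and open, $\Aff^n \setminus Y$ is the vanishing locus of a monomial ideal.'' This is false: $T$ is only a \emph{subtorus} of the big torus $\mathbb{G}_m^n$, and $T$-invariant closed sets need not be unions of coordinate subspaces. For instance, with $T = \mathbb{G}_m$ acting on $\Aff^2$ by weights $(1,1)$, the complement of $V(x+y)$ is $T$-invariant and open but not cut out by monomials. Your later step ``$x'' \in Y$ (its support contains $\mathrm{supp}(y)$)'' leans on exactly this false premise. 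Since combinatorial closedness already implies big-torus invariance, assuming that membership in $Y$ is governed by supports comes close to assuming the conclusion. The openness of $X(\Pi(Y))$ \emph{can} be salvaged---one only needs that the set of supports \emph{realized} in $Y$ is upward-closed, which follows from openness alone by perturbation, and then $\Pi(Y)$ is upwardly closed---but your argument as written should be rewired to avoid the monomial-ideal claim.

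On the remaining steps you are candid that the good-quotient and saturation claims require the structural theory of \cite{@bialynickibirula1998recipe}, and that is accurate. One caution: Theorem~\ref{BBS-good-quotient-characterization} as stated in this paper characterizes when a set \emph{of the form} $X(\Pi)$ has a good quotient; deducing that $\Pi(Y)$ satisfies (i)--(ii) from the fact that the possibly non-combinatorially-closed $Y$ has a good quotient is an additional step, not a tautology.
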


\begin{theorem} \cite[Theorem 1.4]{@bialynickibirula1998recipe}\label{BBS-good-quotient-characterization} 
$X(\Pi)$ is open in $\Aff^n$ and has a good quotient if and only if the
  following two conditions hold:
\begin{enumerate}[(i)]
\item
  $\Pi$ is upwardly closed, that is, if $P \in \Pi$ and if $Q \supseteq P$ is
  another distinguished polytope, then $Q \in \Pi$.

\item
  If $P, Q \in \Pi$ and $P \cap Q$ is a face of $P$ or $Q$, then $P \cap Q \in
  \Pi$.
\end{enumerate}
In fact, assuming (i) holds, (ii) is equivalent to ``If $P, Q \in \Pi$ and $P
\cap Q$ is \textit{contained in} a face of $P$ or $Q$, then $P \cap Q \in
\Pi$.''
\end{theorem}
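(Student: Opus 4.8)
The plan is to reduce the statement to the toric dictionary in four steps: openness of $X(\Pi)$ versus condition (i); a combinatorial description of orbit closures; an identification of $X(\Pi)$ with a Cox-type open set under which the existence of a separated good quotient becomes the fan axioms; and the deduction of the equivalence with the ``contained in a face'' variant. \emph{Openness.} The sets $X(P)$, as $P$ runs over all distinguished polytopes, partition $\Aff^n$. If $x'$ is close to $x$ then $\supp(x)\subseteq\supp(x')$, hence $P(x)\subseteq P(x')$; so if $\Pi$ is upwardly closed then $X(\Pi)$ contains a neighbourhood of each of its points and is open. Conversely, if $X(\Pi)$ is open and $P\in\Pi$, pick $x\in X(P)$; for any distinguished $Q\supseteq P$ perturb $x$ to a nearby $x'$ with $P(x')=Q$ by switching on the coordinates indexed by the vertices of $Q$ (those indexed by characters not in $Q$ already vanish in $x$, since such characters lie outside $P\subseteq Q$). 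For $x'$ close enough $x'\in X(\Pi)$, and since $x'\in X(Q)$ while the pieces partition $\Aff^n$, we get $Q\in\Pi$. As a by-product, (i) forces the largest distinguished polytope $\Conv(0,\chi_1,\dots,\chi_n)$ into every nonempty $\Pi$ satisfying (i), so $X(\Pi)$ always contains the dense torus of $\Aff^n$.

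\emph{Orbit closures.} By the Hilbert--Mumford criterion for torus actions, $\overline{Tx}$ is the union of the $T$-orbits of the limits $\lim_{t\to 0}\lambda(t)\cdot x$ over one-parameter subgroups $\lambda$ with $\sprod{\lambda}{\chi_i}\ge 0$ for all $i\in\supp(x)$; each such limit depends only on the face $F$ of $P(x)$ on which $\sprod{\lambda}{\cdot}$ is minimal, and equals the point $x_F$ defined by $(x_F)_i=x_i$ if $\chi_i\in F$ and $(x_F)_i=0$ otherwise, which satisfies $P(x_F)=F$. Hence $\overline{Tx}\cap X(\Pi)=\bigsqcup_F Tx_F$, the union taken over faces $F$ of $P(x)$ with $0\in F$ and $F\in\Pi$; in particular $Tx$ is closed in $X(\Pi)$ exactly when $P(x)$ has no proper face which contains $0$ and lies in $\Pi$, and the closed orbits of $X(\Pi)$ are precisely the $Tx_{P_0}$ for $P_0$ a minimal element of $(\Pi,\subseteq)$.

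\emph{The fan criterion.} Assume (i). To each minimal $P_0\in\Pi$ attach the cone $\sigma_{P_0}\subseteq N_\R$ that the Cox/Gale-duality dictionary associates to the closed orbit $Tx_{P_0}$, and let $\Delta$ be the collection of all these cones together with their faces. Using that $X(\Pi)=\{x:P(x)\in\Pi\}$ is cut out by the vanishing and non-vanishing of coordinates, one checks that $X(\Pi)=\Aff^n\setminus Z(\Delta)$ is the Cox-type open set of $\Delta$, that (i) is equivalent to $\Delta$ being closed under passing to faces, and hence --- invoking Cox's description of toric quotients and the fact (previous step) that $X(\Pi)$ contains the dense torus, so that any separated good quotient of it is a toric variety --- that $X(\Pi)$ admits a separated good quotient if and only if $\Delta$ is a fan, i.e. the intersection of any two of its cones is a common face of both. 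A Gale-duality computation then identifies this intersection axiom with condition (ii): the cones $\sigma_{P_0},\sigma_{P_1}$ meet along a common face exactly when the relevant distinguished-polytope intersections --- which are faces of one of the two polytopes --- remain in $\Pi$. I expect this translation, together with checking that its failure genuinely yields a non-separated quotient (the ``line with two origins'' phenomenon), to be the main obstacle; the remainder is the standard toric dictionary and bookkeeping with the previous step.

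\emph{The variant.} ``$P\cap Q$ is a face of $P$ or $Q$'' trivially implies ``$P\cap Q$ is contained in a face of $P$ or $Q$'', so the variant implies (ii). For the converse, one checks that, under (i), the ``contained in a face'' condition unwinds via the same Gale-dual dictionary to the very same intersection axiom for $\Delta$; thus both (i)$\,\wedge\,$(ii) and (i)$\,\wedge\,$(variant) are equivalent to the existence of a separated good quotient, hence to each other, which is the asserted equivalence of (ii) with its variant whenever (i) holds.
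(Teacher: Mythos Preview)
The paper does not prove this theorem: it is quoted verbatim from Bia{\l}ynicki-Birula and \'{S}wi\polhk ecicka with the citation \cite[Theorem 1.4]{@bialynickibirula1998recipe} and no argument is supplied. So there is no ``paper's own proof'' to compare against; what follows is an assessment of your proposal on its own merits.

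Your treatment of openness is fine, and the description of $T$-orbit closures via faces of $P(x)$ is the right picture. The genuine gap is in the third step. You assert that the existence of a separated good quotient for $X(\Pi)$ is equivalent to a certain collection $\Delta$ of cones being a fan, and that the fan intersection axiom translates, via Gale duality, into condition (ii). But you do not actually carry out either translation: you write ``I expect this translation \ldots\ to be the main obstacle,'' which is an explicit concession that the heart of the argument is missing. In particular, you have not shown that (a) failure of (ii) forces the quotient to be non-separated, nor (b) that (i) and (ii) together suffice for a separated good quotient. The latter requires producing the quotient and checking separatedness; invoking the Cox construction here is delicate because Cox's theorem presupposes a fan as input, whereas you are trying to \emph{deduce} the fan axioms from the quotient property, so you must be careful not to argue circularly. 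Likewise, the variant in the last paragraph is handled by the phrase ``one checks that \ldots\ unwinds via the same Gale-dual dictionary,'' which is not a proof.

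There is also a more specific issue with the shape of condition (ii). As stated, (ii) concerns \emph{arbitrary} $P,Q\in\Pi$, not just minimal ones, and the intersection $P\cap Q$ need not correspond in any direct way to the intersection $\sigma_{P_0}\cap\sigma_{P_1}$ of cones attached to minimal polytopes. Your dictionary attaches cones only to minimal $P_0$, so the passage from ``$\sigma_{P_0}\cap\sigma_{P_1}$ is a common face'' to ``for all $P,Q\in\Pi$, if $P\cap Q$ is a face of one then $P\cap Q\in\Pi$'' is not immediate and needs a real argument. In short, the outline is reasonable, but the two central reductions --- quotient $\Leftrightarrow$ fan, and fan axiom $\Leftrightarrow$ (ii) --- are asserted rather than proved.
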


\begin{theorem}\label{saturation}
Given two collections of distinguished polytopes $\Pi_1 \subseteq \Pi$,
$X(\Pi_1)$ is saturated in $X(\Pi)$ if and only if whenever $P \in \Pi_1$ and
some face $F$ of $P$ is contained in $\Pi$, we have $F \in \Pi_1$.
\end{theorem}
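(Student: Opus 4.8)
The plan is to prove both directions of the "saturated in" characterization by unwinding the definition of saturation in terms of closed orbits and relating orbit closures to the combinatorial structure of distinguished polytopes. Recall that $X(\Pi_1) \subseteq X(\Pi)$ is saturated if, for every $x \in X(\Pi_1)$, the closure $\overline{Tx}$ taken inside $X(\Pi_1)$ remains closed when taken inside $X(\Pi)$; equivalently, $\overline{Tx}^{X(\Pi)} \subseteq X(\Pi_1)$. So the first step is to describe $\overline{Tx}$ combinatorially: for $x \in X(P)$, the orbit closure $\overline{Tx}$ in $\Aff^n$ consists of the points obtained by sending various subsets of the nonzero coordinates to zero along one-parameter subgroups, and the distinguished polytopes that appear are exactly the faces $F$ of $P$ (with the orbit-closure points in $X(F)$ being those reached by degenerating toward the face $F$). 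This is the toric-geometry fact that the torus orbits in $\overline{Tx} \cong X_{P}$ (the affine toric variety / projective toric variety attached to $P$) are indexed by the faces of $P$. I would state this as a short lemma or simply cite the orbit-face correspondence for the polytope $P(x)$.

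**Next**, granting that description, the forward direction is immediate: suppose $X(\Pi_1)$ is saturated in $X(\Pi)$, let $P \in \Pi_1$ and let $F$ be a face of $P$ with $F \in \Pi$. Pick $x \in X(P)$; then $X(F)$ meets $\overline{Tx}^{X(\Pi)}$ (we need $X(F) \neq \emptyset$, which holds since $F$ is a genuine distinguished polytope appearing in $\Pi$, hence in the ambient weight configuration), and by saturation this intersection lies in $X(\Pi_1)$, forcing $F \in \Pi_1$. For the reverse direction, assume the face-closure condition on $\Pi_1$ and take $x \in X(P)$ with $P \in \Pi_1$; any point $y$ in $\overline{Tx}^{X(\Pi)}$ lies in some $X(F)$ for $F$ a face of $P$, and since $y \in X(\Pi)$ we have $F \in \Pi$; the hypothesis then gives $F \in \Pi_1$, so $y \in X(\Pi_1)$. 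Hence $\overline{Tx}^{X(\Pi)} \subseteq X(\Pi_1)$ and $X(\Pi_1)$ is saturated.

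**The main subtlety** — and the step I expect to require the most care — is the precise statement that the distinguished polytopes occurring in the orbit closure $\overline{Tx}$ are exactly the faces of $P(x)$, and in particular that every face $F$ of $P$ actually is realized, i.e. $X(F) \cap \overline{Tx} \neq \emptyset$. One must be careful about whether "face" is taken with respect to the polytope $P$ alone or also records which weights $\chi_j \in P$ are non-vertices: the convention in the excerpt is that $x_j = 0$ for $\chi_j \notin P$ but $x_j$ may be anything (zero or not) for non-vertex $\chi_j \in P$, so when we degenerate $x$ toward a face $F$ we must track those coordinates too. The cleanest route is: a face $F$ of $P$ is cut out by a linear functional $\ell$ minimized on $P$ along $F$; choosing a one-parameter subgroup $\lambda$ pairing with the weights according to $\ell$, the limit $\lim_{t\to 0}\lambda(t)\cdot x$ sends to zero exactly the coordinates $x_i$ with $\chi_i \notin F$ and fixes the rest, landing in $X(F)$ — here we use that $x_i \neq 0$ for all vertices $\chi_i$ of $P$, which are in particular vertices of any face they lie on. This makes the orbit-face dictionary precise and completes both implications.
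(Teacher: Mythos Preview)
The paper does not give its own proof of this statement; it is quoted (along with the two preceding theorems in that section) from the Bia{\l}ynicki-Birula--\'{S}wi\polhk ecicka papers as background, so there is no in-paper argument to compare against. Your approach is correct and is essentially the standard one: saturation unwinds to the containment $\overline{Tx}^{X(\Pi)} \subseteq X(\Pi_1)$, and the key input is that for $x \in X(P)$ the strata $X(Q)$ meeting $\overline{Tx}$ are exactly those with $Q$ a face of $P$ containing $0$. Your one-parameter-subgroup construction correctly shows every such face is realized; the converse (that only faces occur) is the orbit--face correspondence for the affine toric variety $\overline{Tx}$, which you rightly identify as the point requiring either a citation or a short semigroup-algebra argument. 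One cosmetic remark: not every point of $\overline{Tx}$ is literally a one-parameter-subgroup limit of $x$ itself---rather, the closure is a union of $T$-orbits each of which contains such a limit---but since each stratum $X(F)$ is $T$-stable this does not affect your argument.
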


\begin{remark}\label{rays-of-nd-quotients}
Let $S$ be the big torus in $\Aff^n$ so that $T$ is a subgroup of $S$.  By \cite{@bialynickibirula1998recipe}, every \textit{non-degenerate} good quotient of $\Aff^n$ by $T$ -- that is, a quotient on which $S/T$ acts effectively -- is a toric variety all of whose rays are generated by the Gale duals of the characters $\chi_1, \dots, \chi_n$, just as in the GIT case.
\end{remark}

\section{Enumerating Maximal Open Sets}

Let $N$ be a lattice, $\nu_1, \dots, \nu_s \in N$, and $\rho_i$ be the ray generated by $\nu_i$.  By the Cox construction, this gives us an action of a diagonalizable group $G$ on $\Aff^s$ defined by the characters $\chi_1, \dots, \chi_s$ Gale dual to the $\nu_i$. By Remark \ref{rays-of-nd-GIT-quotients} 
every non-degenerate $G$-maximal open subset $U$ of $\Aff^s$ gives a toric variety $U/G$ whose fan is in $N$ and whose rays are a (possibly proper) subset of $\rho_1, \dots, \rho_s$.  We claim that the converse is also true; our argument is a slight generalization of the Cox construction.
\begin{prop} \label{prop.maximal}
Let $\Sigma$ be a maximal fan whose rays are contained in $\rho_1, \ldots , \rho_s$ then $X_\Sigma$ is the quotient 
of a maximal $G$-subset of $\Aff^s$
\end{prop}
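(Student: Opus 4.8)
The plan is to write down, by a mild extension of the Cox construction, an explicit $G$-invariant open $U \subseteq \Aff^s$ with $U/G \cong X_\Sigma$, and then to deduce that $U$ is $G$-maximal by transporting the maximality of the \emph{fan} $\Sigma$ through the combinatorial machinery of Section~\ref{BBS}. As in the Cox construction we may assume $\supp\Sigma$ is full dimensional, so that $\dim X_\Sigma = n = \dim \wt T = s - \dim G$; this equality is what will force the quotients we meet to be non-degenerate. Reindex the rays so that $\rho_1, \dots, \rho_r$ are those occurring in $\Sigma$ and $\rho_{r+1}, \dots, \rho_s$ are the rest.

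For the candidate I would take
\[
U \;=\; \Aff^s \setminus Z(\Sigma), \qquad Z(\Sigma) = V\bigl(x_{\hat\sigma} : \sigma \in \Sigma\bigr), \qquad x_{\hat\sigma} = \prod_{\rho_i \not\le \sigma} x_i,
\]
where the product is now taken over \emph{all} $i \in \{1,\dots,s\}$ with $\rho_i$ not a face of $\sigma$ (in particular over every $i > r$). This $U$ is a union of coordinate strata, hence open and $S$-invariant, so it is combinatorially closed: $U = X(\Pi)$ for $\Pi = \Pi(U)$. Since no $\rho_i$ with $i > r$ lies in any cone of $\Sigma$, every point of $U$ has its last $s-r$ coordinates nonzero, and I would then check that $U/G \cong X_\Sigma$: this is the usual Cox isomorphism together with the fact that adjoining superfluous generators of $N$ to the Cox ring does not change the quotient, the extra factors of $\G_m$ in $\Aff^s$ being cancelled by the extra factors in $G$. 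Alternatively, one computes the fan of the toric variety $U/G$ from the one-parameter-subgroup limit criterion recalled earlier and directly recovers $\Sigma$. I expect this step to be bookkeeping rather than an obstacle; the only mild subtlety is to keep track of the superfluous rays and of the $\nu_i$ that are not primitive.

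For maximality, let $U' \supseteq U$ be a $G$-maximal open set in which $U$ is saturated; such a $U'$ exists because every $G$-invariant open with separated good quotient sits saturatedly inside a $G$-maximal one (a Zorn's lemma argument: a chain of such opens has a colimit along open immersions which is again a separated good quotient in which $U$ stays saturated; cf.\ \cite{bialynicki-birula1996open}). By \cite[Theorem~1.6]{@bialynickibirula1998recipe}, $U'$ is combinatorially closed, hence $S$-invariant, so $\wt T = S/G$ acts on $Y' \definedby U'/G$ and $\pi'\colon U' \to Y'$ is $S$-equivariant. Because $U$ is saturated in $U'$, the restriction of $\pi'$ to $U$ is the good quotient $U \to U/G \cong X_\Sigma$; thus $X_\Sigma$ is an open, and (as $U$ is dense in $U'$) dense, subvariety of $Y'$, and it is $\wt T$-invariant since $U$ is $S$-invariant. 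Since $\wt T$ acts effectively on $X_\Sigma$, it acts effectively on $Y'$, so $Y'$ is non-degenerate; by Remark~\ref{rays-of-nd-quotients} it is therefore a toric variety $X_{\Sigma'}$ for the torus $\wt T$ whose rays lie among $\rho_1, \dots, \rho_s$. Then $X_\Sigma$, being a $\wt T$-invariant open of $X_{\Sigma'}$, corresponds to a subfan $\Sigma \subseteq \Sigma'$; since $\Sigma$ is maximal with respect to $R \supseteq \Sigma'(1)$, we conclude $\Sigma' = \Sigma$. Hence $Y' = X_{\Sigma'} = X_\Sigma = U/G$ and the open immersion $X_\Sigma = \pi'(U) \hookrightarrow Y'$ is surjective, so $U = (\pi')^{-1}(Y') = U'$. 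Therefore $U$ is already $G$-maximal, and $X_\Sigma = U/G$ is the quotient of a $G$-maximal open set, as desired.

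The step I expect to be the heart of the matter is the one where a competing open set $U'$ is forced to be the Cox-type open set of a toric variety whose fan again uses only rays from $R$: this is exactly what allows the fan-theoretic maximality of $\Sigma$ to be applied. The two ingredients that make this go through are the description of $G$-maximal sets as combinatorially closed (so that $U'$ is automatically $S$-invariant and $\wt T$ acts on $U'/G$) and the dimension count forcing non-degeneracy; by contrast, the identification $U/G \cong X_\Sigma$ is routine, and the enlargement of an arbitrary saturated competitor to a $G$-maximal one is a formal Zorn argument.
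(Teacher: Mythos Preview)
Your proof is correct and follows the same approach as the paper: construct a Cox-type open $U\subseteq\Aff^s$ with $U/G\cong X_\Sigma$, then use the maximality of the fan $\Sigma$ to rule out any strictly larger saturated $U'$ by comparing fans. The paper supplies the Cox-type isomorphism you defer to ``bookkeeping'' via an explicit lemma on invariant rings (and in fact records two such opens $\wt U$ and $\wh U$, your $U$ being $\wt U$); conversely, you are more explicit than the paper about first enlarging to a $G$-maximal $U'$ via Zorn---so that $U'$ is combinatorially closed and $\wt T$ acts on $U'/G$---and about verifying non-degeneracy of $U'/G$.
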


The key step in the proof is the following lemma:

\begin{lemma}
  Let $N, N'$ be  lattices and let $\varphi: N' \to N$ be a map with finite cokernel.  Suppose $\sigma'$ is a strictly-convex rational polyhedral cone in $N'_\R$ and $\sigma$ such a cone in $N_\R$ such that $\varphi_\R(\sigma') = \sigma$.  Let $M = N^*$ and $M' = (N')^*$.  Then the ring map $\psi: \C[M \cap \sigma^\vee] \to \C[M' \cap (\sigma')^\vee]$ dual to the induced map of toric varieties $X_{\sigma'} \to X_\sigma$ is isomorphic to the inclusion $\C[M' \cap (\sigma')^\vee]^G \subseteq \C[M' \cap (\sigma')^\vee]$, where $G$ is the diagonalizable group with character group $\Coker{\varphi^*}$.
\end{lemma}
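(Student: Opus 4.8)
The plan is to unwind the definitions on both sides and identify the map $\psi$ with the obvious inclusion of invariants. First I would set up the coordinate description: the map $\varphi \colon N' \to N$ dualizes to $\varphi^* \colon M \to M'$, which is injective (since $\varphi$ has finite cokernel, $\varphi_\R$ is surjective, so $\varphi^*_\R$ is injective, and $M$ is torsion-free). Thus $M$ sits inside $M'$ as a finite-index sublattice, and $G = D(\Coker \varphi^*)$ acts on $\C[M']$ by letting a character of $G$, i.e.\ an element $\bar m' \in \Coker \varphi^* = M'/\varphi^*(M)$, act on the monomial $x^{m'}$ by the scalar it determines; the invariant subring $\C[M']^G$ is precisely the span of monomials $x^{m'}$ with $m' \in \varphi^*(M)$, that is, $\C[\varphi^*(M)] \cong \C[M]$. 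This is the standard fact that $\Spec \C[M'] \to \Spec \C[M]$ is the quotient of tori $T' \to T = T'/G$.

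Next I would restrict this picture to the semigroup rings. The key geometric input is the hypothesis $\varphi_\R(\sigma') = \sigma$. From this I need two things: (a) $\varphi^*$ maps $M \cap \sigma^\vee$ into $M' \cap (\sigma')^\vee$, which is immediate since $\langle \varphi^* m, n'\rangle = \langle m, \varphi n'\rangle \ge 0$ for all $n' \in \sigma'$ when $m \in \sigma^\vee$; and (b) the reverse containment at the level of invariants, namely that every $m' \in M' \cap (\sigma')^\vee$ that also lies in $\varphi^*(M)$ actually comes from $M \cap \sigma^\vee$. For (b), write $m' = \varphi^* m$; then for every $n' \in \sigma'$ we have $\langle m, \varphi n'\rangle = \langle m', n'\rangle \ge 0$, and since $\varphi_\R(\sigma') = \sigma$, every element of $\sigma$ is of the form $\varphi_\R(n')$ with $n' \in \sigma'$, hence $\langle m, n\rangle \ge 0$ for all $n \in \sigma$, i.e.\ $m \in \sigma^\vee$. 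Combining (a) and (b): $\C[M \cap \sigma^\vee] \hookrightarrow \C[M' \cap (\sigma')^\vee]$ via $\varphi^*$, and its image is exactly $\C[M' \cap (\sigma')^\vee] \cap \C[M']^G = \C[M' \cap (\sigma')^\vee]^G$ (the last equality because $G$ acts monomially, so invariants of a monomial subalgebra are spanned by the invariant monomials it contains).

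Finally I would check that this inclusion is the same map as $\psi$, the comorphism of $X_{\sigma'} \to X_\sigma$. The induced map of toric varieties is by definition the one functorially associated to $\varphi$ with $\varphi_\R(\sigma') \subseteq \sigma$ (this is the standard construction, e.g.\ via $\Hom{}{M \cap \sigma^\vee}{\C}$ of semigroups), and on coordinate rings it is precisely the semigroup-algebra map induced by $\varphi^* \colon M \cap \sigma^\vee \to M' \cap (\sigma')^\vee$. So $\psi$ is literally the inclusion we just identified with $\C[M' \cap (\sigma')^\vee]^G \subseteq \C[M' \cap (\sigma')^\vee]$, up to the isomorphism $\C[M \cap \sigma^\vee] \xrightarrow{\sim} \C[\varphi^*(M) \cap (\sigma')^\vee]$.

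The only genuinely nontrivial point is step (b)—that surjectivity of $\varphi_\R$ \emph{onto $\sigma$}, rather than merely $\varphi_\R(\sigma') \subseteq \sigma$, forces the reverse dual containment; the rest is bookkeeping. I expect the main obstacle (such as it is) to be making sure the torsion in $\Coker \varphi^*$ is handled correctly so that $\C[M']^G$ really is $\C[\varphi^*(M)]$ and not something larger—this is where finiteness of $\Coker \varphi$ (equivalently, of $\Coker \varphi^*$) is used, guaranteeing $G$ is a finite diagonalizable group acting with the stated invariants.
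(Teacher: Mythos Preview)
Your proposal is correct and follows essentially the same approach as the paper: identify $\psi$ as $x^m \mapsto x^{\varphi^*(m)}$, use $\varphi_\R(\sigma') = \sigma$ to get the biconditional $m \in \sigma^\vee \iff \varphi^*(m) \in (\sigma')^\vee$, and check that the $G$-invariant monomials are exactly those indexed by $\varphi^*(M)$.

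One small correction to your closing remark: finiteness of $\Coker\varphi$ is \emph{not} equivalent to finiteness of $\Coker\varphi^*$, and $G$ is in general positive-dimensional (in the intended application to the Cox construction with $N' = \Z^s$, $N = \Z^n$, the group $G$ has rank $s-n$). Fortunately nothing in your argument actually uses $G$ being finite---the identification $\C[M']^G = \C[\varphi^*(M)]$ holds for any diagonalizable $G$ acting through its character group---so the proof stands as written once that parenthetical is deleted.
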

\begin{proof}
  As an additive group, $\C[M \cap \sigma^\vee]$ is the direct sum of $\C x^m$ for $m \in M \cap \sigma^\vee$, and $\psi$ sends $x^m$ to $x^{\varphi^*(m)}$.  Notice that, since $\varphi^*$ is an injection, so is $\psi$, and observe that, for $m \in M$,
  \begin{align*}
      &m \in \sigma^\vee \\
      \iff &\sprod{m}{n} \ge 0 \;\forall n \in \sigma \\
      \iff &\sprod{m}{\varphi(n')} \ge 0 \;\forall n' \in \sigma' \\
      \iff &\sprod{\varphi^*(m)}{n'} \ge 0 \;\forall n' \in \sigma' \\
      \iff &\varphi^*(m) \in (\sigma')^\vee
  \end{align*}
  Since the $\wh{G} = \Coker{\varphi^*}$, we get a natural induced action of $G$ on $\C[M' \cap (\sigma')^\vee]$ via
  \begin{gather*}
      g\cdot x^{m'} = \ol{m'}(g)x^{m'},
  \end{gather*}
  where $\ol{m'}$ is the image of $m'$ in $\wh{G}$.  Since this action preserves the natural direct sum decomposition of $\C[M' \cap (\sigma')^\vee]$, it suffices to check the monomials in the image of $\C[M \cap \sigma^\vee]$ are exactly the $G$-invariant monomials of $\C[M' \cap (\sigma')^\vee]$.  But this is true since $g\cdot x^{m'} = x^{m'}$ for all $g \in G$ if and only if $\ol{m'}$ is the trivial character on $G$ if and only if $m' = \varphi^*(m)$ for some $m \in M$, and we have already verified that in this case $m' \in (\sigma')^\vee$ if and only if $m \in \sigma$.
\end{proof}
\begin{proof}[Proof of Proposition \ref{prop.maximal}]
Now suppose that $\Cone(\nu_1, \dots, \nu_s) = N_\R$ and let $\Sigma$ be a fan maximal with respect to $\rho_1, \dots, \rho_s$.  Let $\varphi: \Z^s \to N$ be the map sending $e_i \mapsto \nu_i$, where $e_i$ is a standard basis vector of $\Z^s$.  We can ``lift'' this to a fan on $\Z^s$ in, in general, several different ways, of which we will list the two extreme cases.  For every cone $\sigma \in \Sigma$, define cones
\begin{gather*}
    \wt{\sigma} = \Cone(e_i: \rho_i \text{ is a ray of } \sigma) \\
    \wh{\sigma} = \Cone(e_i: \rho_i \subseteq \sigma)
\end{gather*}
and let $\wt{\Sigma} = \{\wt{\sigma}: \sigma \in \Sigma\}$, $\wh{\Sigma} = \{\tau: \tau \text{ is a face of }\wh{\sigma},\, \sigma \in \Sigma\}$.  If the rays of $\Sigma$ do not include every $\rho_i$, then these two fans will be distinct.  Both $\wt{\Sigma}$ and $\wh{\Sigma}$ are fans in $\Z^s$ with the following properties:
\begin{enumerate}
    \item $\wt{\sigma}$ and its faces are the only cones of $\wt{\Sigma}$ that $\varphi$ maps into $\sigma$, and
    \item $\varphi(\wt{\sigma}) = \sigma$,
\end{enumerate}
and similarly for $\wh{\Sigma}$.  From these it follows that there are induced maps $f: X_{\wt{\Sigma}} \to X_\Sigma$ and $g: X_{\wh{\Sigma}} \to X_\Sigma$ which are affine and satisfy $f^{-1}(X_\sigma) = X_{\wt{\sigma}}$, and similarly for $g$.  Since $\Cone(\nu_1, \dots, \nu_s) = N_\R$, it follows that the cokernel of $\varphi$ is finite, so the lemma applies in this case to show that each restriction $f|_{X_{\wt{\sigma}}}: X_{\wt{\sigma}}: X_\sigma$ is $G$-invariant and satisfies the condition that the natural map $\mathcal{O}_{X_\sigma} \to f_*\mathcal{O}_{X_{\wt{\sigma}}}^G$ is an isomorphism.  Thus, $f$ is a good quotient, and similarly for $g$.  Just as in the Cox construction, $X_{\wt{\sigma}}$ and $X_{\wh{\sigma}}$ are naturally embedded in $\Aff^s$ as open subsets $\wt{U}$ and $\wh{U}$, showing that $X_\Sigma$ indeed arises as the good quotient of at least one $G$-invariant open subset.  If these open subsets were contained as proper saturated open sets in a larger
$G$-invariant open $U \subset \Aff^s$, then we would be able to embed $X_\Sigma$ as an open subset in the toric variety $U/G$.
The toric variety $U/G$ would also have a fan with rays among $\rho_1, \dots, \rho_s$, but would necessarily contain $\Sigma$, contradicting the maximality of $\Sigma$.  Therefore, $\wt{U}$ and $\wh{U}$ must be $G$-maximal. 
\end{proof}

\begin{remark}
Suppose we have an inclusion $U' \subseteq U$ of $G$-maximal subsets of $\Aff^s$ with non-degenerate quotients.  Then both $U'/G$ and $U/G$ are toric varieties with torus $T = S/G$; call the fan of $U'/G$ $\Sigma'$ and the fan of $U/G$ $\Sigma$.  The inclusion $U' \subseteq U$ induces a map $\varphi: U'/G \to U/G$ that is the identity on $T$, and by the characterization of the fans in terms of limit points, it follows that each cone $\sigma'$ of $\Sigma'$ must be entirely contained in a cone $\sigma$ of $\Sigma$.  

If $\Sigma$ and $\Sigma'$ have the same rays and same support, then it follows that each cone $\sigma \in \Sigma$ is the union of possibly several cones in $\Sigma'$.  In other words, the inclusion $U' \subseteq U$ corresponds to subdividing some cones of $\Sigma$ without adding any more rays.  In this case, if $\Sigma$ is simplicial, then $\Sigma = \Sigma'$.

Since any fan can be subdivided to a simplicial fan without adding new rays \cite[Proposition 11.1.7]{CLS}, we always have that any minimal $G$-maximal open set in $\Aff^s$ gives a simplicial fan. However, the converse need not be true, as the following shows.
\end{remark}
\begin{example}\label{ex.reichstein}
Consider the inclusion of open sets $U' = \Aff^7\setminus (Z(c) \cup Z(b, d, f) \cup Z(a, e, g))$ and $U = \Aff^7\setminus (Z(b, d, f) \cup Z(c, d) \cup Z(c, e) \cup Z(a, e, g))$ from the setup of Proposition \ref{dim4-rays} below.  
We have that $U' \subseteq U$, but the induced map on quotients $U'/G \to U/G$ is actually an isomorphism
as both quotients are equal to the simplicial toric variety $X_\Sigma$ where
$\Sigma$ is the fan in $\R^4$ with primitive collection $\{ \{\rho_3\}, \{\rho_2,\rho_4,\rho_6\}, \{\rho_1,\rho_5, \rho_7\}$. Here
the ray $\rho_i$ is generated by the primitive vector $\nu_i$ for $i=1, \ldots , 7$ where $\nu_i$ are given as:
\begin{gather*}
      \begin{array}{lcrrrr}
      \nu_1 &=& (1, & 0, & 0, & 0) \\
      \nu_2 &=& (0, & 1, & 0, & 0) \\
      \nu_3 &=& (0, & 0, & 1, & 0) \\
      \nu_4 &=& (0, & 0, & 0, & 1) \\
      \nu_5 &=& (-2, & 1, & 1, & 1) \\
      \nu_6 &=& (-1, & -1, & 2, & 1) \\
      \nu_7 &=& (2, & -1, & -4, & -3)
      \end{array}
  \end{gather*}

Precisely,
this fan has 9 maximal cones which are each spanned by four rays - two from the set $\{\rho_2, \rho_4, \rho_6\}$ and two from
the set $\{\rho_1, \rho_5, \rho_7\}$. In particular $\Sigma$ is an example of a maximal fan which does not contain the ray $\rho_3$. The reason
is that the ray $\rho_3$ lies in the relative interior of the cone spanned by $\rho_1,\rho_2,\rho_6,\rho_7$. 
Both quotients are GIT quotients. However the
action of $G$ on $U$ has points with positive dimensional stabilizer so $U^{sss} \neq \emptyset$, while
for the action of $G$ on $U'$, $(U')^{s} = (U')^{ss}$.  At the level of stacks, 
the Deligne-Mumford quotient stack $[U'/G]$ is obtained from the stack $[U/G]$ by a divisorial Reichstein transformation \cite[Section 6]{edidin2012partial}
along the Cartier divisor defined by the invariant function $c$.
The induced map of quotients is $U'/G \to U/G$ is a blowup
by \cite[Proposition 3.7]{edidin2021canonical}. Since
the map of quotients is also an isomorphism
it is necessarily the blowup along a Cartier divisor in $U/G$. 
Note that the two open sets $U'$ and $U$ in $\Aff^7$ are exactly the two maximal open sets $\wt{U}$ and $\wh{U}$ associated
to the simplicial toric variety $X_\Sigma$ which were defined in the proof of Proposition \ref{prop.maximal}.
\end{example}

\begin{remark}
Every toric GIT quotient is projective by construction which
implies that the non-empty sets of $\chi$-semistable points are always $G$-maximal. Moreover, in our setup the converse holds as well:
\end{remark}

\begin{prop}\label{Projective-quotients-are-GIT}
If $X = V/G$ for some $G$-maximal open set $V \subseteq \Aff^s$ is projective, then $X = \Aff^s\sslash_LG$ for some $G$-linearized line bundle $L$ on $\Aff^s$.
\end{prop}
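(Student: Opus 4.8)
The plan is to show that the projectivity of $X = V/G$ forces $V$ to coincide with a set of $\chi$-semistable points for a suitable character $\chi$ of $G$. First I would use the correspondence from Remark \ref{rays-of-nd-GIT-quotients} (and Remark \ref{rays-of-nd-quotients}): the toric variety $X$ has torus $\wt{T} = S/G$, and its fan $\Sigma$ lives in $N_\R$ with rays among the Gale duals $\nu_1, \dots, \nu_s$ of the characters $\chi_1, \dots, \chi_s$. Since $X$ is projective, $\Sigma$ is the normal fan of a rational polytope $P \subseteq M_\R$; by rescaling we may take $P$ to have vertices in $M$. Write $P = \{m \in M_\R : \sprod{m}{\nu_i} \ge -a_i \ \forall i\}$ for a suitable integer vector $\mathbf{a} = (a_1, \dots, a_s)$, padding with redundant inequalities for those $\nu_i$ that are not ray generators of $\Sigma$ so that all $s$ indices appear. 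This $\mathbf{a}$ determines a character $\chi = \gamma(\mathbf{a}) \in \wh{G}$, and hence a $G$-linearization $L_\chi$ of the trivial bundle on $\Aff^s$.

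Next I would invoke the description of toric GIT quotients from the ``Toric GIT'' subsection: $\Aff^s \sslash_\chi G$ is exactly the toric variety of the polyhedron $P_{\mathbf{a}} = \{m : \sprod{m}{\nu_i} \ge -a_i\}$, which is $P$, so $\Aff^s \sslash_\chi G \cong X$ as toric varieties with torus $\wt{T}$. It remains to identify the underlying open sets: I must show $V = (\Aff^s)^{ss}_\chi$. Both $V$ and $(\Aff^s)^{ss}_\chi$ are $G$-invariant open subsets of $\Aff^s$ whose good quotients are the same toric variety $X$, and $(\Aff^s)^{ss}_\chi$ is $G$-maximal (every set of semistable points is $G$-maximal, as noted in the remark preceding the proposition, because its quotient is projective hence complete, hence the set is $G$-maximal by the remark in Section \ref{BBS}). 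Since $V$ is also $G$-maximal, and an inclusion of $G$-maximal sets inducing an isomorphism of quotients would have to be an equality of saturated open sets (a saturated open subset whose quotient is all of the ambient quotient is the whole space), I would argue $V = (\Aff^s)^{ss}_\chi$ directly: the identity map on the common quotient $X$ pulls back to a $\wt{T}$-equivariant isomorphism identifying the dense tori, and by the limit-point characterization of the fan both open sets are the preimage of $X$ under the quotient map from the largest $G$-invariant open set on which the quotient exists — more concretely, $X(\Pi(V)) = X(\Pi((\Aff^s)^{ss}_\chi))$ since both collections of distinguished polytopes are determined by the same fan $\Sigma$ via Theorem \ref{BBS-good-quotient-characterization}.

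The main obstacle I anticipate is the last identification $V = (\Aff^s)^{ss}_\chi$ rather than merely $V/G \cong (\Aff^s)^{ss}_\chi / G$. Two distinct $G$-maximal open sets can have isomorphic quotients (indeed Example \ref{ex.reichstein} exhibits $U' \subsetneq U$ with $U'/G \cong U/G$), so isomorphism of quotients is genuinely not enough. The resolution must use that the isomorphism here is \emph{canonical} — it is the identity on $\wt{T}$ and intertwines the two quotient maps to $X$ — so that the two open sets have literally the same image fan $\Sigma$ and hence, by the Białynicki-Birula–Świȩcicka dictionary (Theorems \ref{BBS-good-quotient-characterization} and \ref{saturation}), the same collection $\Pi$ of distinguished polytopes; $G$-maximality of both then upgrades ``same $\Pi$ up to saturation'' to genuine equality. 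I would make this precise by checking that $\Pi(V)$ is recovered from $\Sigma$ as the set of distinguished polytopes $P(x)$ for $x$ with $\lim_{t\to 0}\lambda(t)$ existing in $X$, a description that manifestly depends only on $\Sigma$ and not on which $G$-maximal model we started from, and then noting $\Pi((\Aff^s)^{ss}_\chi)$ is recovered the same way from the same $\Sigma$.
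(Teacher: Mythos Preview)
Your first two paragraphs already prove the proposition, by a route different from the paper's. You go directly through the toric dictionary: projectivity of $X$ gives a lattice polytope $P$ with normal fan $\Sigma$, and writing $P$ in the form $\{m:\sprod{m}{\nu_i}\ge -a_i\}$ (padding with redundant inequalities for the unused $\nu_i$) produces a character $\chi=\gamma(\mathbf{a})$ with $\Aff^s\sslash_\chi G = X_P = X_\Sigma = X$. The paper instead replaces $V$ by the auxiliary open set $U=X_{\wh{\Sigma}}\subset\Aff^s$ from Proposition~\ref{prop.maximal}, whose complement has codimension $\ge 2$; it then applies \cite[Converse~1.12]{GIT} to get a $G$-linearized bundle $L$ on $U$ with $U\sslash_L G = X$, and uses the codimension condition to extend $L$ and its invariant sections to $\Aff^s$. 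Your argument is more hands-on and avoids the appeal to general GIT; the paper's argument is a specialization of a general principle and makes transparent why the ``missing rays'' case causes no trouble.

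Your third paragraph, however, attempts to prove the stronger statement $V=(\Aff^s)^{ss}_\chi$, which the proposition does not assert, and your argument for it is wrong. You claim that $\Pi(V)$ is determined by the fan $\Sigma$ of $V/G$, but Example~\ref{ex.reichstein} refutes exactly this: there $U'\subsetneq U$ are both $G$-maximal, the induced map $U'/G\to U/G$ is the identity on the dense torus and an isomorphism of toric varieties, yet $\Pi(U')\ne\Pi(U)$. So the collection of distinguished polytopes is \emph{not} recoverable from $\Sigma$, even among $G$-maximal open sets with a canonical identification of quotients. You should simply stop once you have $\Aff^s\sslash_\chi G\cong X$; that is all the proposition asks.
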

\begin{proof}
In the case that the fan of the quotient uses all the rays, this follows from \cite[Proposition 14.1.9]{CLS}; in the general case, we proceed as follows: by the proof of Proposition \ref{prop.maximal}, the quotient $X$ is also equal to $U/G$, where $U$ is the open subset determined by the fan $\wh{\Sigma}$.  Since $\wh{\Sigma}$ uses all the rays generated by the standard basis vectors, the complement of $U$ has codimension at least two in $\Aff^s$.  By \cite[Converse 1.12]{GIT}, there is a $G$-linearized line bundle $L$ \textit{on $U$} such that $U/G = U\sslash_L G$.  Since $\Aff^s$ is smooth and the codimension of $\Aff^s\setminus U$ is at least two, $L$ extends to a $G$-linearized line bundle on $\Aff^s$ which we will still denote by $L$.  The inclusion $U \subseteq \Aff^s$ induces a restriction map $\bigoplus_{i \ge 0}\Gamma(\Aff^s, L^{\otimes i})^G \to \bigoplus_{i \ge 0}\Gamma(U, L^{\otimes i})^G$, which is an isomorphism since the complement of $U$ has codimension at least two.  Since $U\sslash_L G = \Proj(\bigoplus_{i \ge 0}\Gamma(U, L^{\otimes i})^G)$ and $\Aff^s\sslash_L G = \Proj(\bigoplus_{i \ge 0}\Gamma(\Aff^s, L^{\otimes i})^G)$, this shows that $X_\Sigma$ is a GIT quotient of $\Aff^s$ by $G$.
\end{proof}

\section{Proofs of Theorems \ref{all-maximal-fans-complete} and \ref{all-maximal-nonprojective-fans-noncomplete}}

To prove Theorems \ref{all-maximal-fans-complete} and
\ref{all-maximal-nonprojective-fans-noncomplete}, we first exhibit
explicit examples in dimensions 3 and 4 respectively, using these to
bootstrap the result to higher dimensions.  At the level of quotients,
we take the product with a number of copies of $\Pro^1$, proving that
all the notions involved behave well with respect to taking products.
The key technical result we need is that maximal fans on the rays
corresponding to products are all products of maximal fans on the
separate factors.

\subsection{Dimensions 3 and 4}

\begin{prop}\label{dim3-rays}
There exists a set $R$ of six rays in $\R^3$ such that every fan in $\R^3$ maximal with respect to $R$ is complete, with both projective and non-projective fans occurring.  In both the projective and non-projective cases, both simplicial and non-simplicial fans occur.
\end{prop}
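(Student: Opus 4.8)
The plan is to exhibit an explicit set $R=\{\rho_1,\dots,\rho_6\}$ of rays in $\R^3$ with $\Cone(\nu_1,\dots,\nu_6)=\R^3$ and then to classify \emph{every} maximal fan on $R$. The useful preliminary observation is that a complete fan cannot be properly enlarged, so every complete fan whose rays lie in $R$ is already maximal with respect to $R$; consequently, once one shows that every maximal fan on $R$ is complete, the maximal fans on $R$ are exactly the complete fans on $R$ --- equivalently, intersecting cones with the unit sphere, the polyhedral subdivisions of $S^2$ with vertex set the given rays. Under this dictionary, \emph{simplicial} corresponds to \emph{triangulation}, and, by Proposition~\ref{Projective-quotients-are-GIT} together with the fact that toric GIT quotients are projective, \emph{projective} corresponds to \emph{regular (coherent) subdivision} --- i.e.\ a normal fan, i.e.\ a full-dimensional chamber of the secondary fan of the Gale-dual vectors $\{\beta_i\}\subseteq\wh{G}_\R\cong\R^3$. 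Thus the four assertions of the proposition reduce to producing, for a well-chosen $R$, a regular triangulation, a regular subdivision with a $4$-ray cell, a non-regular triangulation, and a non-regular subdivision with a $4$-ray cell. I would take the six rays to be an explicit configuration (for instance a suitable perturbation of $\pm e_1,\pm e_2,\pm e_3$), chosen so that a quadrilateral cell can appear and so that a non-regular completion exists; the non-regular, hence non-projective, examples are supplied by the complete non-projective toric variety with $6$ rays in $\R^3$ mentioned in the introduction.

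The substantive point --- and the main obstacle --- is to prove that $R$ admits no ``punctured'' maximal fan, i.e.\ that every maximal fan on $R$ is complete. By Proposition~\ref{prop.maximal} and the combinatorial theory recalled in Section~\ref{BBS}, maximal fans on $R$ correspond to $G$-maximal open subsets of $\Aff^6$, hence to the upwardly-closed, face-closed collections $\Pi$ of distinguished polytopes $\Conv(0,\chi_i:i\in S)$ that are maximal for the saturation relation of Theorem~\ref{saturation}. Since there are only finitely many subsets $S\subseteq\{1,\dots,6\}$ and only a handful of distinct distinguished polytopes, this is a finite enumeration, which I would carry out, checking in each case that the resulting fan covers $\R^3$. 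The conceptual reason to expect this is a convexity fact about the chosen rays: if $\Sigma$ were maximal with $\ol{\R^3\setminus\supp\Sigma}$ nonempty, the boundary of this closed region lies in $\supp\Sigma$, and if the $\nu_i$ are spread out enough then the region is a union of strongly convex cones generated by subsets of $\{\nu_i\}$ that meet $\Sigma$ along common faces --- so gluing them on would properly enlarge $\Sigma$, contradicting maximality. Making this airtight for the explicit configuration --- in particular ruling out that the ``hole'' cannot be subdivided compatibly using only the six available rays --- is where the real work lies, and in the worst case it is settled by going through the finite list above.

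To assemble the proof I would: (1) record the vectors $\nu_1,\dots,\nu_6$ and compute the Gale-dual characters $\chi_i$ and vectors $\beta_i$; (2) compute the secondary fan of $\{\beta_i\}$ and read off the projective maximal fans as its full-dimensional chambers; (3) enumerate the remaining $G$-maximal open subsets of $\Aff^6$ using Theorems~\ref{BBS-good-quotient-characterization} and~\ref{saturation}, obtaining the non-projective maximal fans; (4) check that every fan appearing in (2) or (3) is complete, which proves the first assertion; and (5) point to a regular triangulation and a regular subdivision with a $4$-ray maximal cone among the chambers, and to a non-regular triangulation and a non-regular subdivision with a $4$-ray maximal cone among the rest, which gives the remaining assertions. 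Steps (1)--(3) and (5) are bookkeeping once the configuration in (1) is chosen with the right coarse combinatorics; step (4) is the delicate one, for the reasons above, and it is essentially why the proposition is proved by explicit example rather than by a general principle.
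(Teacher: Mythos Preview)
Your plan is essentially the paper's own approach: fix explicit generators $\nu_1,\dots,\nu_6$, pass to the Gale-dual characters to get a diagonal $\G_m^3$-action on $\Aff^6$, enumerate the $G$-maximal open subsets via the Bia{\l}ynicki-Birula--\'{S}wi\polhk ecicka criteria (Theorems~\ref{BBS-good-quotient-characterization} and~\ref{saturation}), and then read off completeness, projectivity, and simpliciality case by case; the paper does exactly this with a computer-assisted enumeration (tabulated in the appendix), finding $87$ non-degenerate quotients, all complete, with $73$ projective and $14$ not, and both simplicial and non-simplicial fans in each class.

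One cautionary remark: your suggested placeholder ``a suitable perturbation of $\pm e_1,\pm e_2,\pm e_3$'' is riskier than it sounds. For the octahedral configuration every complete fan is projective, and small perturbations need not create a non-regular triangulation; the paper's six vectors are a rather specific, asymmetric choice (with a particular Gale dual) engineered so that non-projective complete fans actually appear. So in carrying out your step~(1) you should commit to an explicit configuration known to support a complete non-projective example (e.g.\ the one in the paper or the classical $6$-ray example you allude to), rather than hoping a generic perturbation works. Apart from this, your outline matches the paper's proof.
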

\begin{proof}
  Consider the following six vectors in $\R^3$ from which the rays in $R$ will be generated:
  \begin{gather*}
    \begin{array}{rrr}
    (1, & 0, & 0) \\
    (0, & 1, & 0) \\
    (2, & 2, & 3) \\
    (-1, & -2, & -2) \\
    (-2, & -1, & -2) \\
    (0, & 0, & 1)
    \end{array}
\end{gather*}
We will take the Gale dual of these vectors to transform the problem of categorizing the fans on these rays into the problem of categorizing the non-degenerate good quotients of an action on $\Aff^6$.  These rays define a linear map $\R^6 \to \R^3$ with matrix
\begin{gather*}
    \begin{bmatrix}
    1 & 0 & 2 & -1 & -2 & 0 \\
    0 & 1 & 2 & -2 & -1 & 0 \\
    0 & 0 & 3 & -2 & -2 & 1
    \end{bmatrix}
\end{gather*}
One then determines the kernel of this matrix; it can be verified that a basis of the kernel is given by $(-2, -2, 1, 0, 0, -3)$, $(1, 2, 0, 1, 0, 2)$, and $(2, 1, 0, 0, 1, 2)$, i.e., that the kernel is the image of the injection $\R^3 \to \R^6$ with matrix
\begin{gather*}
    \begin{bmatrix}
    -2 & 1 & 2 \\
    -2 & 2 & 1 \\
    1 & 0 & 0 \\
    0 & 1 & 0 \\
    0 & 0 & 1 \\
    -3 & 2 & 2
    \end{bmatrix}
\end{gather*}
Taking the transpose of this matrix gives
\begin{gather*}
    \begin{bmatrix}
    -2 & -2 & 1 & 0 & 0 & -3 \\
    1 & 2 & 0 & 1 & 0 & 2 \\
    2 & 1 & 0 & 0 & 1 & 2
    \end{bmatrix}
\end{gather*}
the columns of which are the vectors Gale dual to the six ray generators.  These six dual vectors can be interpreted as six characters of the torus $T = \mathrm{G}_m^3$, which in turn specifies a diagonal action of $T$ on $\Aff^6$.  This matrix is called the \textit{weight matrix} of this action.

The maximal fans with respect to $R$ are then naturally in one-to-one correspondence with the maximal non-degenerate good quotients of $\Aff^6$ with respect to this action.  Using the results summarized in section \ref{BBS}, we implemented an algorithm in Python + SAGE to enumerate these, the full results being tabulated in the appendix.  In summary, out of 87 non-degenerate quotients, 73 are projective (54 simplicial, 19 not) and 14 are complete, non-projective (2 simplicial, 12 not).
\end{proof}

\begin{prop}\label{dim4-rays}
There exists a set $R$ of seven rays in $\R^4$ such that every fan in $\R^4$ maximal with respect to $R$ is either projective or non-complete, with both possibilities occurring.  In both the projective and non-projective cases, both simplicial and non-simplicial fans occur.
\end{prop}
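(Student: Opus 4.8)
The plan is to repeat, one dimension higher, the strategy of the proof of Proposition~\ref{dim3-rays}. We take $R$ to be the set of seven rays $\rho_1, \dots, \rho_7$ generated by the primitive vectors $\nu_1, \dots, \nu_7 \in \Z^4$ displayed in Example~\ref{ex.reichstein}. First I would write down the $4 \times 7$ matrix whose columns are the $\nu_i$; since the $\nu_i$ span $\R^4$ its kernel is $3$-dimensional. Computing an integral basis of this kernel and transposing yields a $3 \times 7$ \emph{weight matrix} whose columns $\chi_1, \dots, \chi_7$ are the characters Gale dual to the $\nu_i$, and these define a diagonal action of $G \cong \mathbb{G}_m^3$ on $\Aff^7$.

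By Proposition~\ref{prop.maximal} and Remark~\ref{rays-of-nd-quotients}, the fans in $\R^4$ maximal with respect to $R$ correspond bijectively to the non-degenerate $G$-maximal open subsets of $\Aff^7$. I would enumerate these using the machinery of Section~\ref{BBS}: run over all collections $\Pi$ of distinguished polytopes of the $\chi_i$ satisfying conditions (i) and (ii) of Theorem~\ref{BBS-good-quotient-characterization}, discard the degenerate ones, and retain those maximal with respect to the ``saturated in'' relation characterized in Theorem~\ref{saturation}. This is a finite search, carried out by the same Python + SAGE implementation used for Proposition~\ref{dim3-rays}.

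For each resulting maximal fan $\Sigma$ I would record three things: whether $\Sigma$ is simplicial (read off directly from the maximal cones), whether $\Sigma$ is complete (i.e.\ whether $\supp\Sigma = \R^4$), and whether $\Sigma$ is projective. For the last, rather than searching for a strictly convex support function I would invoke Proposition~\ref{Projective-quotients-are-GIT}: a maximal quotient is projective if and only if it is a GIT quotient $\Aff^7 \sslash_\chi G$ for some character $\chi$, so it suffices to compute the secondary fan of $\chi_1, \dots, \chi_7$, list the chamber quotients, and match them against the enumerated maximal open sets. Reading off the resulting table, the proposition follows: one checks that every maximal fan that occurs is either projective or non-complete, that both possibilities are realized, and that among the projective fans and among the non-projective (hence non-complete) fans there occur both simplicial and non-simplicial examples.

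The main difficulty is not a single hard step but the correctness and completeness of the enumeration, together with one genuinely delicate point: certifying that \emph{no} maximal fan on these rays is complete yet non-projective. The clean way to do this is to verify that every complete maximal quotient in fact arises as a chamber quotient of the secondary fan, so that Proposition~\ref{Projective-quotients-are-GIT} forces it to be projective; exhibiting explicit strictly convex piecewise-linear support functions would be an alternative but far more laborious certificate. Dually, one must confirm that the non-complete maximal fans really are maximal --- that is, that the associated open sets are not properly saturated inside any larger $G$-invariant open subset of $\Aff^7$ --- which is precisely what the saturation test of Theorem~\ref{saturation} checks once the enumeration itself is trusted.
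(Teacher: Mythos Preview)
Your proposal is correct and follows essentially the same approach as the paper: the same seven rays, Gale dualization to a $\mathbb{G}_m^3$-action on $\Aff^7$, enumeration of maximal open sets via the Bia{\l}ynicki-Birula--\'{S}wi\polhk ecicka machinery, and comparison with the GIT quotients coming from the secondary fan. The only practical detail you leave vague is how to certify that the non-GIT maximal fans are genuinely non-complete; the paper does this by probabilistic sampling---exhibiting explicit points outside the support of each non-projective fan---since the deterministic completeness check used in dimension~$3$ was computationally infeasible here.
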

\begin{proof}
  The proof is similar to the previous.  In this case, the seven rays are generated by
  \begin{gather*}
      \begin{array}{rrrr}
      (1, & 0, & 0, & 0) \\
      (0, & 1, & 0, & 0) \\
      (0, & 0, & 1, & 0) \\
      (0, & 0, & 0, & 1) \\
      (-2, & 1, & 1, & 1) \\
      (-1, & -1, & 2, & 1) \\
      (2, & -1, & -4, & -3)
      \end{array}
  \end{gather*}
  and the corresponding action has weight matrix
  \begin{gather*}
  \begin{bmatrix}
    3 & 0 & -3 & -2 & 1 & 1 & 0 \\
    0 & 3 & -3 & -1 & -1 & 2 & 0 \\
    1 & 1 & 1 & 1 & 1 & 1 & 1
  \end{bmatrix}
\end{gather*}
Of the resulting 112 non-degenerate quotients, 85 are projective (36 simplicial, 49 not), and the remaining 27 are non-complete (8 simplicial, 19 not).
\end{proof}
\subsection{Product fans}
To extend these results to higher dimensions, we need to analyze how fans behave with respect to direct sums of lattices.  Let $N$ and $N'$ be two lattices.  If $\Delta$ is a fan in $N_\R$ and $\Delta'$ is a fan in $N'_\R$ then the \textit{product fan $\Delta \times \Delta'$} is the fan $\{\sigma \times \sigma': \sigma \in \Delta, \sigma' \in \Delta'\}$.  In terms of the corresponding toric varieties, we have that $X_{\Delta \times \Delta'} = X_\Delta \times X_{\Delta'}$.

Now suppose we are given $\nu_1, \dots, \nu_s \in N$ and $\nu'_1, \dots, \nu'_t \in N'$.  Let $\wt{\nu_i}$, $\wt{\nu'_j}$ denote the images of these in $N \oplus N'$ under the two inclusion maps.  It is easy to see that these are the rays of all the product fans $\Delta \times \Delta'$ if $\Delta$ is a fan on $\nu_1, \dots, \nu_s$ and $\Delta'$ is a fan on $\nu'_1, \dots, \nu'_t$.  Our goal in this section is to provide a partial converse; that is, suppose $\Sigma$ is a fan in $N \times N'$ all of whose rays are generated by the $\wt{\nu_i}$ and $\wt{\nu'_j}$.  We will show that, while $\Sigma$ may not be a product fan itself, it can always be obtained from a product fan by removing cones.

\begin{lemma}
Every cone $\sigma \in \Sigma$ is of the form $\tau \times \tau'$, where $\tau$ is a cone in $N$ with rays among the $\nu_i$ and $\tau'$ is a cone in $N'$ with rays among the $\nu'_j$.
\end{lemma}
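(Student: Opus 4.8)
The plan is to show that any cone $\sigma \in \Sigma$ splits as a product by analyzing which rays it can contain and exploiting the direct-sum structure of the ambient lattice. First I would note that every ray of $\Sigma$ is, by hypothesis, generated by one of the $\wt{\nu_i}$ (lying in the subspace $N_\R \oplus 0$) or one of the $\wt{\nu'_j}$ (lying in $0 \oplus N'_\R$). So for a cone $\sigma \in \Sigma$, partition its ray set into those of the first type, spanning a cone $\tau \subseteq N_\R \oplus 0$, and those of the second type, spanning a cone $\tau' \subseteq 0 \oplus N'_\R$; identify these with cones in $N_\R$ and $N'_\R$ respectively. Clearly $\tau \times \tau' \supseteq \sigma$ since $\sigma = \Cone$ of its rays and every ray lies in one factor, and conversely $\tau, \tau' \subseteq \sigma$. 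The content of the lemma is the reverse inclusion $\tau \times \tau' \subseteq \sigma$, i.e. that $\sigma$ is not some lower-dimensional ``diagonal'' subcone of the product of its two projections.

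The key step is to use the intrinsic characterization of the fan $\Sigma$ recalled in the Background in terms of limits of one-parameter subgroups: a cone $\sigma_\lambda$ lies in $\Sigma$ precisely when $\lim_{t\to 0}\lambda(t)$ exists in $X_\Sigma$, and $u \in \sigma$ iff the corresponding limit exists. Since $\Sigma$ is a maximal fan on the combined ray set, by Proposition \ref{prop.maximal} its toric variety is $W/G$ for a $G$-maximal open $W \subseteq \Aff^{s+t}$, and the characters defining the $G$-action are the Gale duals of the $\wt{\nu_i}, \wt{\nu'_j}$. Here the decomposition $N \oplus N'$ forces a corresponding block structure: the weight matrix is Gale dual to a ray configuration in which the first $s$ vectors have zero last $\dim N'$ coordinates and the last $t$ have zero first $\dim N$ coordinates. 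One shows that if $\lambda_1 \in N$ has a limit (so $\ol{\Cone}$ of such $\lambda_1$'s is $\tau \in \Sigma$) and $\lambda_2 \in N'$ has a limit (so the analogous cone is $\tau' \in \Sigma$), then the concatenated one-parameter subgroup $(\lambda_1, \lambda_2) \in N \oplus N'$ also has a limit: using the affine-chart description $X_{\wh\sigma} \hookrightarrow \Aff^{s+t}$ and the combinatorial-closedness/upward-closedness of $\Pi(W)$ from Theorem \ref{BBS-good-quotient-characterization}, the distinguished polytope controlling the limit of $(\lambda_1,\lambda_2)$ is related to those of $\lambda_1$ and $\lambda_2$, and the existence of both individual limits places $(\lambda_1,\lambda_2)$ in a cone of $\Sigma$. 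Hence any point of $\relint(\tau) \times \relint(\tau')$, being of the form $(\lambda_1,\lambda_2)$ with $\lambda_1 \in \relint\tau$, $\lambda_2\in\relint\tau'$, lies in some cone of $\Sigma$ whose closure must then contain all of $\tau\times\tau'$; combined with $\tau\times\tau' \supseteq \sigma \supseteq \tau \cup \tau'$ and the fact that $\Sigma$ is a fan (so cones meet in faces), one pins down that this cone is $\sigma$ itself and $\sigma = \tau\times\tau'$.

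An alternative, more elementary route avoids one-parameter subgroups entirely: work directly with the dual cones and the affine pieces $X_\sigma = \Spec \C[\sigma^\vee \cap M]$. If $\sigma \subsetneq \tau \times \tau'$ with $\sigma, \tau\times\tau'$ having the same rays, then $\sigma^\vee \supsetneq (\tau\times\tau')^\vee = \tau^\vee \times (\tau')^\vee$ (dualizing in $M_\R = M_{N,\R} \oplus M_{N',\R}$), so there is a lattice point $m \in \sigma^\vee \cap M$ not in $\tau^\vee \times (\tau')^\vee$; write $m = (m_1, m_2)$. One then argues that the affine toric variety $X_\sigma$ for such a ``thin'' $\sigma$ glues into $X_\Sigma$ in a way that could be refined: the cone $\tau\times\tau'$ itself, or at least a proper subdivision of it compatible with $\Sigma$, could be added to $\Sigma$ without introducing new rays, contradicting maximality. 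Making ``could be added without new rays'' precise is the delicate point, since one must check the fan axioms (intersections are faces) for the enlarged collection against all the other cones of $\Sigma$; the cleanest way is again via the quotient picture, embedding $X_\Sigma$ as a proper open subvariety of a larger toric variety on the same rays.

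The main obstacle I expect is precisely this last step: translating ``$\sigma$ is strictly smaller than $\tau\times\tau'$'' into an honest violation of maximality, i.e.\ producing a genuinely larger fan (or larger $G$-stable open set with separated good quotient) on the same ray set. The one-parameter subgroup argument handles the ``local'' question (does $(\lambda_1,\lambda_2)$ have a limit?) but one must be careful that the resulting cone really belongs to $\Sigma$ rather than merely sitting inside the support; this is where upward-closedness of $\Pi(W)$ (Theorem \ref{BBS-good-quotient-characterization}(i)) does the real work, since it guarantees that enlarging $W$ by the relevant distinguished polytopes keeps a good quotient, forcing those polytopes to already be present by $G$-maximality. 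I would therefore carry out the proof in the quotient language, reducing the lemma to a combinatorial statement about distinguished polytopes of the block-structured character configuration.
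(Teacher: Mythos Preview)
You have dramatically overcomplicated an elementary statement, and in doing so you import a hypothesis (maximality of $\Sigma$) that the lemma does not assume. Look again at your own first paragraph: you partition the rays of $\sigma$ into those of the form $\wt{\nu_i}=(\nu_i,0)$ and those of the form $\wt{\nu'_j}=(0,\nu'_j)$, and you correctly recall that a strongly convex polyhedral cone is the nonnegative span of its rays. That already finishes the proof. Writing out the nonnegative span,
\[
\sigma=\Bigl\{\sum_{i\in I}\lambda_i(\nu_i,0)+\sum_{j\in J}\lambda'_j(0,\nu'_j):\lambda_i,\lambda'_j\ge 0\Bigr\}
=\Bigl\{\Bigl(\sum_{i\in I}\lambda_i\nu_i,\ \sum_{j\in J}\lambda'_j\nu'_j\Bigr)\Bigr\}
=\tau\times\tau',
\]
and this is exactly the paper's argument. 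There is no ``diagonal subcone'' phenomenon to worry about: that could only occur if $\sigma$ had a ray with both coordinates nonzero, which is excluded by hypothesis, or if $\sigma$ were not generated by its extremal rays, which is impossible for cones in a fan.

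Everything from your second paragraph onward is therefore unnecessary, and in fact problematic. Both your one-parameter-subgroup argument and your ``alternative route'' rely on $\Sigma$ being maximal (via Proposition~\ref{prop.maximal} or via the contradiction of maximality), but the lemma is stated and proved for \emph{any} fan whose rays lie among the $\wt{\nu_i},\wt{\nu'_j}$; maximality only enters later, in Corollary~\ref{maximal-fans-in-product}. So your proposal, as written, would establish only a special case of what is needed, and would do so with far heavier machinery than the one-line computation the situation actually calls for.
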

\begin{proof}
Let $\wt{\nu_i}$, $i \in I$ and $\wt{\nu'_j}$, $j \in J$ be the rays of $\sigma$ and let $\tau = \Cone(\nu_i: i \in I)$, $\tau' = \Cone(\nu'_j: j \in J)$.  Then the elements of $\sigma$ are exactly those of the form
\begin{gather*}
    \sum_{i \in I} \lambda_i\wt{\nu_i}
    + \sum_{j \in J} \lambda'_j\wt{\nu'_j}
    = \qty(\sum_{i \in I} \lambda_i\nu_i,\;
    \sum_{j \in J} \lambda'_j\nu'_j),
\end{gather*}
with $\lambda_i$, $\lambda'_j \ge 0$, which are exactly the elements of $\tau\times\tau'$.
\end{proof}

\begin{lemma}
A subset $F$ of $\tau \times \tau'$ is a face of $\tau \times \tau'$ if and only if $F = \mu \times \mu'$, where $\mu$ is a face of $\tau$ and $\mu'$ is a face of $\tau'$.
\end{lemma}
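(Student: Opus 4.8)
The plan is to use the standard description of faces of a rational polyhedral cone via supporting hyperplanes, together with the fact that the dual of a product cone is the product of the duals. Recall that the faces of a cone $\sigma$ in a lattice $N$ are exactly the sets of the form $H_m \cap \sigma$, where $m \in \sigma^\vee$ and $H_m = \{x : \sprod{m}{x} = 0\}$ is the hyperplane determined by $m$. So I would begin by recording the identification $(N \oplus N')^* = M \oplus M'$ together with the pairing $\sprod{(m,m')}{(x,x')} = \sprod{m}{x} + \sprod{m'}{x'}$, and observing that under this identification $(\tau \times \tau')^\vee = \tau^\vee \times (\tau')^\vee$: indeed $(m,m')$ is nonnegative on all of $\tau \times \tau'$ if and only if $m$ is nonnegative on $\tau$ (test against $(x,0)$) and $m'$ is nonnegative on $\tau'$ (test against $(0,x')$).

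For the forward direction, suppose $F$ is a face of $\tau \times \tau'$, so $F = H_{(m,m')} \cap (\tau \times \tau')$ for some $(m,m') \in \tau^\vee \times (\tau')^\vee$. For $(x,x') \in \tau \times \tau'$ we have $\sprod{m}{x} \ge 0$ and $\sprod{m'}{x'} \ge 0$, so $\sprod{(m,m')}{(x,x')} = 0$ forces $\sprod{m}{x} = 0$ and $\sprod{m'}{x'} = 0$. Hence $F = (H_m \cap \tau) \times (H_{m'} \cap \tau') = \mu \times \mu'$, where $\mu$ is the face of $\tau$ cut out by $m$ and $\mu'$ is the face of $\tau'$ cut out by $m'$.

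Conversely, if $\mu = H_m \cap \tau$ is a face of $\tau$ (with $m \in \tau^\vee$) and $\mu' = H_{m'} \cap \tau'$ is a face of $\tau'$ (with $m' \in (\tau')^\vee$), then $(m,m') \in (\tau \times \tau')^\vee$ by the identification above, and the same sign argument in reverse shows $H_{(m,m')} \cap (\tau \times \tau') = \mu \times \mu'$; so $\mu \times \mu'$ is a face of $\tau \times \tau'$.

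I do not expect a genuine obstacle here: the only point requiring a little care is the identification of the dual of the product cone, and possibly the remark that all cones in sight are rational polyhedral (so that the supporting-hyperplane description of faces applies and faces are again rational polyhedral), but both are routine. The sign argument — that a sum of two nonnegative numbers vanishes iff both vanish — is what makes the product structure of faces drop out immediately in both directions.
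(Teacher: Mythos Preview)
Your proof is correct and takes a genuinely different route from the paper. The paper works directly with the line-segment definition of face (a subset $F$ such that whenever a point of $F$ lies in the relative interior of a segment in the cone, both endpoints lie in $F$). For the forward direction, the paper leans on the ambient fan $\Sigma$: since $\tau\times\tau'$ is a cone of $\Sigma$, its faces are again cones of $\Sigma$, hence by the preceding lemma are products $\mu\times\mu'$; it then checks by hand, via the segment criterion, that $\mu$ is a face of $\tau$ (and symmetrically for $\mu'$). The converse is done the same way, testing segments coordinatewise.

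Your argument instead uses the supporting-hyperplane description of faces together with the identification $(\tau\times\tau')^\vee = \tau^\vee\times(\tau')^\vee$, and the observation that a sum of two nonnegative quantities vanishes iff both do. This has two advantages: it is entirely self-contained (it does not invoke the surrounding fan $\Sigma$ or the previous lemma), and it gives the product decomposition of $F$ and the face property on each factor in one stroke. The paper's approach, by contrast, avoids appealing to duality and stays within the elementary convex definition, at the cost of relying on context that the cone sits inside a fan. Either way the result is routine; your version would stand as a proof of the lemma in isolation, whereas the paper's version is tailored to the running setup.
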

\begin{proof}
Suppose first that $F$ is a face of $\tau\times\tau'$.  By the previous, every face of $\tau \times \tau'$, being a cone of $\Sigma$ by the fan condition, is of the form $\mu\times\mu'$, so it remains to show that $\mu$ is a face of $\tau$, for the rest will follow by symmetry.  If $\mu\times\mu'$ is empty, one can take $\mu = \mu' = \emptyset$, so suppose $\mu$ and $\mu'$ are both non-empty and suppose that $x, y \in \tau$ are such that $tx + (1-t)y \in \mu$ for some $t \in (0,1)$.  Letting $x'$ be any point of $\mu'$, we have that $t(x,x') + (1-t)(y,x') \in \mu\times\mu'$, so, since this is a face of $\tau\times\tau'$, we have that $(x,x'), (y,x') \in \mu\times\mu'$, implying that $x,y \in \mu$, and $\mu$ is a face of $\tau$.

Conversely, suppose that $\mu$ is a face of $\tau$ and $\mu'$ is a face of $\tau'$.  Let $t(x,x') + (1-t)(y,y') \in \mu\times\mu'$ for some $(x,x'),(y,y') \in \tau\times\tau'$ and some $t \in (0,1)$.  Then $tx + (1-t)y \in \mu$, and thus $x,y \in \mu$.  Similarly we get $y,y' \in \mu'$, so $(x,x'), (y,y') \in \mu\times\mu'$ and $\mu\times\mu'$ is a face of $\tau\times\tau'$.
\end{proof}

\begin{corollary}
  A product cone $\tau \times \tau'$ is simplicial if and only if both $\tau$ and $\tau'$ are.
\end{corollary}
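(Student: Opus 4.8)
\emph{Proof proposal.} The plan is to reduce the statement to a ray count using the face characterization just proved. First I would record the elementary observation that $\dim(\tau\times\tau') = \dim\tau + \dim\tau'$: the linear span of $\tau\times\tau'$ inside $(N\oplus N')_\R$ is exactly the product of the linear span of $\tau$ with the linear span of $\tau'$, so the dimensions add.

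Next I would count the rays of $\tau\times\tau'$ by invoking the preceding lemma. Every face of $\tau\times\tau'$ has the form $\mu\times\mu'$ with $\mu$ a face of $\tau$ and $\mu'$ a face of $\tau'$, and by the dimension-additivity above $\dim(\mu\times\mu') = \dim\mu+\dim\mu'$. A ray is a one-dimensional face, so it must be either $\rho\times\{0\}$ with $\rho$ a ray of $\tau$, or $\{0\}\times\rho'$ with $\rho'$ a ray of $\tau'$; conversely each such product is a one-dimensional face, hence a ray, of $\tau\times\tau'$. Therefore the number of rays of $\tau\times\tau'$ equals the number of rays of $\tau$ plus the number of rays of $\tau'$.

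Finally I would combine these counts with the standard fact that a strongly convex rational polyhedral cone $\sigma$ is generated by its rays, so that it has at least $\dim\sigma$ rays, with equality exactly when $\sigma$ is simplicial. Applying this to $\tau$, $\tau'$ and $\tau\times\tau'$, and using the two additivity facts, $\tau\times\tau'$ is simplicial if and only if
\[
\bigl(\#\{\text{rays of }\tau\}-\dim\tau\bigr)+\bigl(\#\{\text{rays of }\tau'\}-\dim\tau'\bigr)=0,
\]
and since each summand is a nonnegative integer this forces both to vanish, i.e.\ both $\tau$ and $\tau'$ are simplicial; the converse direction is the same equation read backwards. There is no substantial obstacle here: the only points needing a sentence of care are the bookkeeping of the zero cone $\{0\}$ as the unique minimal face (so that the ``dimension $0$'' entries in the ray count are handled correctly) and the quotation of the fact that a strongly convex cone requires at least $\dim\sigma$ generating rays.
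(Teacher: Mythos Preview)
Your proposal is correct and follows essentially the same approach as the paper: use the face characterization to show that the rays of $\tau\times\tau'$ are exactly $\rho\times\{0\}$ and $\{0\}\times\rho'$, so the ray count is additive, then combine with additivity of dimension. The paper's proof is just a terser version of what you wrote, omitting the explicit ``nonnegative summands sum to zero'' bookkeeping.
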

\begin{proof}
  By the previous, the rays of $\tau \times \tau'$ must be of the form $\rho \times \{0\}$, $\rho$ a ray of $\tau$, or $\{0\} \times \rho'$, $\rho'$ a ray of $\tau'$.  Thus, the number of rays of $\tau \times \tau'$ is the sum of the number of rays of $\tau$ and $\tau'$.  Since the dimension of $\tau\times\tau'$ is likewise the sum of the dimensions of $\tau$ and $\tau'$, the result follows.
\end{proof}

\begin{theorem}
The fan $\Sigma$ is a subfan of a product fan.
\end{theorem}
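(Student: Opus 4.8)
The plan is to exhibit a product fan containing $\Sigma$ directly. I would define the ``slices''
\[
  \Delta \definedby \{\tau : \tau \times \{0\} \in \Sigma\}
  \qquad\text{and}\qquad
  \Delta' \definedby \{\tau' : \{0\} \times \tau' \in \Sigma\},
\]
and claim that $\Delta$ is a fan on the rays $\nu_1,\dots,\nu_s$, that $\Delta'$ is a fan on $\nu'_1,\dots,\nu'_t$, and that $\Sigma \subseteq \Delta\times\Delta'$. Since $\Sigma$ is already a fan, this last containment exhibits it as a subfan of the product fan $\Delta\times\Delta'$.

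The containment is almost immediate from the two preceding lemmas. Every cone of $\Sigma$ has the form $\tau\times\tau'$, and the faces of $\tau\times\tau'$ are precisely the products $\mu\times\mu'$ with $\mu$ a face of $\tau$ and $\mu'$ a face of $\tau'$. Taking $\mu' = \{0\}$ (respectively $\mu = \{0\}$) and using that a fan is closed under passage to faces, we get that $\tau\times\{0\}$ and $\{0\}\times\tau'$ lie in $\Sigma$ whenever $\tau\times\tau'$ does; hence $\tau\in\Delta$ and $\tau'\in\Delta'$, so $\tau\times\tau'\in\Delta\times\Delta'$. (Note $\{0\} = \{0\}\times\{0\}$ is a face of any ray of $\Sigma$, so $\Delta$ and $\Delta'$ are nonempty.)

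It remains to check that $\Delta$ is a fan with rays among the $\nu_i$; the statement for $\Delta'$ is symmetric. Closure under faces: if $\tau\in\Delta$, the faces of $\tau\times\{0\}$ are the $\mu\times\{0\}$ with $\mu$ a face of $\tau$, and these lie in $\Sigma$, so $\mu\in\Delta$. Closure under intersection: if $\tau_1,\tau_2\in\Delta$, then $\tau_1\times\{0\}$ and $\tau_2\times\{0\}$ are cones of the fan $\Sigma$, so $(\tau_1\cap\tau_2)\times\{0\} = (\tau_1\times\{0\})\cap(\tau_2\times\{0\})$ is a common face of both; by the face lemma, $\tau_1\cap\tau_2$ is then a common face of $\tau_1$ and $\tau_2$. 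Finally, a ray of $\Delta$ is a one-dimensional $\tau$ with $\tau\times\{0\}$ a ray of $\Sigma$, hence generated by some $\wt{\nu_i}$ or some $\wt{\nu'_j}$; the latter is impossible, since $\wt{\nu'_j}\in\{0\}\times N'_\R$ would force $\tau = \{0\}$, so $\tau = \Cone(\nu_i)$. I do not anticipate a real obstacle here: the two lemmas carry the structural content, and what is left is the routine verification that the slices reassemble into fans — the only point requiring a moment's attention being the ray computation that rules out a ray of $\Delta$ secretly coming from the $N'$ factor.
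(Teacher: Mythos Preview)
Your proof is correct and follows essentially the same strategy as the paper: build the two component fans $\Delta,\Delta'$ from $\Sigma$ using the two preceding lemmas, verify the fan axioms, and observe that every cone of $\Sigma$ lies in $\Delta\times\Delta'$. The only cosmetic difference is that the paper defines $\Delta$ as $\{\pi_N(\sigma):\sigma\in\Sigma\}$ while you define it as $\{\tau:\tau\times\{0\}\in\Sigma\}$; since $\tau\times\{0\}$ is a face of $\tau\times\tau'$ these are the same set, and your slice formulation actually makes the intersection check a touch cleaner (you avoid the paper's nonemptiness remark for $\tau_1'\cap\tau_2'$).
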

\begin{proof}
Let $\Delta$ be the collection of cones $\pi_N(\sigma)$ and $\Delta'$ be the collection of $\pi_{N'}(\sigma)$ for $\sigma \in \Sigma$.  We have established that every cone of $\Sigma$ is of the form $\tau\times\tau'$ where $\tau \in \Delta$ and $\tau' \in \Delta'$, so it remains to verify that $\Delta$ and $\Delta'$ are actually fans.  Given $\tau \in \Delta$, let $\sigma \in \Sigma$ be such that $\tau = \pi_N(\sigma)$, so that $\sigma = \tau\times\tau'$.  Then, for any face $\mu$ of $\tau$, the previous shows that $\mu\times\tau'$ is a face of $\tau\times\tau'$, so that $\mu\times\tau' \in \Sigma$ and $\mu \in \Delta$.

Now suppose $\tau_1, \tau_2 \in \Delta$ and let $\tau_1', \tau_2'$ be such that $\tau_1\times\tau_1', \tau_2\times\tau_2' \in \Sigma$.  We have that $\tau_1\times\tau_1' \cap \tau_2\times\tau_2' = (\tau_1\cap\tau_2)\times(\tau_1'\times\tau_2')$ is a face of both $\tau_1\times\tau_1'$ and $\tau_2\times\tau_2'$.  Now, since every cone of $\Sigma$ contains the origin, we have that $\tau_1\cap\tau_2$ and $\tau_1'\cap\tau_2'$ are non-empty, and, thus, the previous lemma shows that $\tau_1\cap\tau_2$ is a face of $\tau_1$ and $\tau_2$.  Therefore, $\Delta$ is a fan, and by symmetry so is $\Delta'$.
\end{proof}

\begin{corollary}\label{maximal-fans-in-product}
If $\Sigma$ is maximal, then it is a product of a maximal fan in $N$ and a maximal fan in $N'$.
\end{corollary}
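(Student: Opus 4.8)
The plan is to bootstrap from the theorem just proved. Recall that it produced two collections of cones, $\Delta = \{\pi_N(\sigma) : \sigma \in \Sigma\}$ and $\Delta' = \{\pi_{N'}(\sigma) : \sigma \in \Sigma\}$, showed both are fans, and showed $\Sigma \subseteq \Delta \times \Delta'$. The goal is to upgrade this inclusion to an equality and to check that $\Delta$ and $\Delta'$ are maximal with respect to $\{\nu_1,\dots,\nu_s\}$ and $\{\nu'_1,\dots,\nu'_t\}$ respectively; then $\Sigma = \Delta \times \Delta'$ is the desired decomposition.

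First I would note that the rays of $\Delta$ lie among the $\nu_i$ and those of $\Delta'$ among the $\nu'_j$: by the first lemma of this subsection every $\sigma \in \Sigma$ equals $\tau \times \tau'$ with $\tau$ spanned by a subset of the $\nu_i$ and $\tau'$ by a subset of the $\nu'_j$, and $\pi_N(\tau\times\tau') = \tau$, $\pi_{N'}(\tau\times\tau') = \tau'$. Hence the product fan $\Delta \times \Delta'$, whose rays are precisely the $\rho\times\{0\}$ for $\rho$ a ray of $\Delta$ together with the $\{0\}\times\rho'$ for $\rho'$ a ray of $\Delta'$, has all of its rays in $R = \{\wt{\nu_i}\}\cup\{\wt{\nu'_j}\}$. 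Since $\Sigma \subseteq \Delta\times\Delta'$ and $\Sigma$ is maximal with respect to $R$, I conclude $\Sigma = \Delta\times\Delta'$.

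Finally I would show $\Delta$ is maximal with respect to $\{\nu_1,\dots,\nu_s\}$, the statement for $\Delta'$ following by symmetry. Arguing by contradiction, suppose $\Delta \subsetneq \Delta_1$ for some fan $\Delta_1$ with $\Delta_1(1) \subseteq \{\nu_1,\dots,\nu_s\}$, and choose $\tau \in \Delta_1 \setminus \Delta$. Then $\Delta_1\times\Delta'$ is a fan with all rays in $R$, and it contains $\tau\times\{0\}$ since the zero cone is a face of every cone of $\Delta'$ and so belongs to $\Delta'$ (note $\Delta'$ is nonempty, as $\Sigma$ is). On the other hand $\tau\times\{0\} \notin \Delta\times\Delta' = \Sigma$, because writing $\tau\times\{0\} = \tau_0\times\tau_0'$ with $\tau_0\in\Delta$ and applying $\pi_N$ would give $\tau = \tau_0 \in \Delta$. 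Hence $\Sigma \subsetneq \Delta_1\times\Delta'$, contradicting the maximality of $\Sigma$. I expect no serious obstacle here; the only things to watch are that every auxiliary product fan keeps its rays inside $R$ and that enlarging one factor really does enlarge the product, which is exactly where the zero cone and nonemptiness of the fans are used.
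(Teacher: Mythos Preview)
Your argument is correct and is precisely the natural unpacking of the corollary that the paper leaves implicit: use the theorem to get $\Sigma \subseteq \Delta \times \Delta'$, invoke maximality of $\Sigma$ with respect to $R$ to force equality, and then observe that any strict enlargement of a factor would yield a strict enlargement of the product, again contradicting maximality. The paper gives no separate proof, so there is nothing to compare beyond noting that you have supplied exactly the intended details.
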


\subsection{Products of GIT quotients}
To show that the property of being projective plays well with taking products of quotients, we collect here a few facts about semistable points of products.

Suppose that $G$ acts on $\Aff^m$ and $G'$ acts on $\Aff^n$, and let $\chi$ be a character of $G$, $\chi'$ a character of $G'$.  Then $G\times G'$ acts naturally on $\Aff^{m+n}$, and the characters $\chi$ and $\chi'$ give characters of $G\times G'$ as well via pulling back along the projections onto $G,\;G'$.  Let $\chi''$ denote the product of these induced characters.  

We claim that in the case where $G$ and $G'$ are both tori, $(\Aff^{m+n})^{ss}_{\chi''} = (\Aff^m)^{ss}_\chi \times (\Aff^n)^{ss}_{\chi'}$.  To see this, observe that invariant sections of the $G$-linearized trivial bundle given by $\chi$ are exactly polynomial functions $f$ on $\Aff^m$ satisfying $gf = \chi(g)f$ for all $g \in G$; such functions are called \textit{$(G,\chi)$-invariant}.  

Suppose $(a,b) \in (\Aff^m)^{ss}_\chi \times (\Aff^n)^{ss}_{\chi'}$.  Then there exists a $(G,\chi)$-invariant polynomial $f(x)$ and a $(G',\chi')$-invariant polynomial $f'(y)$ such that $f(a) \ne 0$ and $f'(b) \ne 0$.  Then $ff'$ is $(G\times G', \chi'')$-invariant and $ff'(a,b) \ne 0$, so $(\Aff^{m+n})^{ss}_{\chi''} \subseteq (\Aff^m)^{ss}_\chi \times (\Aff^n)^{ss}_{\chi'}$.

Conversely, suppose $(a,b) \in (\Aff^{m+n})^{ss}_{\chi''}$.  Then there exists a $(G\times G', \chi'')$-invariant polynomial $f''(x,y)$ such that $f''(a,b) \ne 0$.  Write
\begin{gather*}
    f''(x,y)
    = \sum_{i,j}c_{i,j}x^iy^j
    = \sum_j\qty(\sum_i c_{i,j}x^i)y^j.
\end{gather*}
Then, for any $g \in G$, we have that
\begin{gather*}
    \sum_j\qty[g\qty(\sum_i c_{i,j}x^i)]y^j
    = (g,1)f''(x,y)
    = \chi''(g,1)f''(x,y)
    = \sum_j\qty[\chi(g)\qty(\sum_i c_{i,j}x^i)]y^j,
\end{gather*}
from which we have that $g\qty(\sum_i c_{i,j}x^i) = \chi(g)\qty(\sum_i c_{i,j}x^i)$ for all $j$.  But because $G$ is a torus, this implies that $gc_{i,j}x^i = \chi(g)c_{i,j}x^i$ for all $i,j$, i.e., $gx^i = \chi(g)x^i$ for all the $i,j$ for which $c_{i,j} \ne 0$.  By symmetry, we have $g'y^j = \chi'(g')y^j$ for all the $i,j$ for which $c_{i,j} \ne 0$.  Since $f''(a,b) \ne 0$, we must have $c_{i,j}a^ib^j \ne 0$ for some $i,j$.  This implies that $c_{i,j} \ne 0$, so that $x^i$ and $y^j$ are $(G,\chi)$- and $(G', \chi')$-invariant respectively.  We also have that $a^i \ne 0$ and $b^j \ne 0$, so by definition $(a,b) \in (\Aff^m)^{ss}_\chi \times (\Aff^n)^{ss}_{\chi'}$, and the claim is proved.

Since, for any $G$-stable $U \subseteq \Aff^m$ and $G'$-stable $V \subseteq \Aff^n$, we have that $(U \times V)/(G \times G') \cong U/G\,\times\,V/G'$, this completes the proof of
\begin{prop}
If $U/G$ and $V/G'$ are both GIT quotients, then $(U \times V)/(G \times G') \cong U/G\,\times\,V/G'$ is a GIT quotient as well.
\end{prop}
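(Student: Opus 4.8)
The plan is to realize $U/G\times V/G'$ literally as a GIT quotient $\Aff^{m+n}\sslash_{\chi''}(G\times G')$ for a suitable linearization, by computing the semistable locus of the product action. Write $G$ for a torus acting diagonally on $\Aff^m$ and $\chi$ for a character with $U=(\Aff^m)^{ss}_{\chi}$, and similarly $G'$, $\Aff^n$, $\chi'$ with $V=(\Aff^n)^{ss}_{\chi'}$; by definition the hypothesis that $U/G$ and $V/G'$ are GIT quotients is exactly the existence of such presentations. Let $G\times G'$ act on $\Aff^{m+n}=\Aff^m\times\Aff^n$ by the product action, pull $\chi$ and $\chi'$ back along the two projections, and let $\chi''$ be the product of the resulting characters of $G\times G'$. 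The proof rests on the identity
\[
    (\Aff^{m+n})^{ss}_{\chi''}=(\Aff^m)^{ss}_{\chi}\times(\Aff^n)^{ss}_{\chi'}=U\times V.
\]

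For the inclusion $\supseteq$ I would take $(a,b)$ with $a\in U$, $b\in V$, choose a $(G,\chi)$-semi-invariant $f$ with $f(a)\neq 0$ and a $(G',\chi')$-semi-invariant $f'$ with $f'(b)\neq 0$, and observe that $f\otimes f'$ is $(G\times G',\chi'')$-semi-invariant, is nonzero at $(a,b)$, and has nonvanishing locus $\{f\neq 0\}\times\{f'\neq 0\}$, a product of affine opens and hence affine; so $(a,b)$ is $\chi''$-semistable. For the inclusion $\subseteq$ I would start from a $(G\times G',\chi'')$-semi-invariant $f''$ with $f''(a,b)\neq 0$, expand $f''=\sum_{i,j}c_{ij}x^iy^j$ in monomials, and use invariance under $G\times\{1\}$ and $\{1\}\times G'$: because $G$ is a torus it acts on each monomial $x^i$ by a character, so comparing coefficients (the $y^j$, and then the $x^i$, being linearly independent) forces every $x^i$ with $c_{ij}\neq 0$ to be $(G,\chi)$-invariant, and symmetrically every such $y^j$ to be $(G',\chi')$-invariant. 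Picking $(i,j)$ with $c_{ij}a^ib^j\neq 0$ then exhibits $x^i$ as a $(G,\chi)$-invariant nonvanishing at $a$ with affine nonvanishing locus, so $a\in U$, and likewise $b\in V$.

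Granting the identity, $\Aff^{m+n}\sslash_{\chi''}(G\times G')=(U\times V)/(G\times G')$ is a GIT quotient by construction, and it remains to identify this with $U/G\times V/G'$, i.e.\ to check that good quotients commute with products of actions by linearly reductive groups: the map $U\times V\to U/G\times V/G'$ is affine as a product of affine morphisms, and on global functions $\Gamma(U\times V,\mathcal{O})^{G\times G'}=\bigl(\Gamma(U,\mathcal{O})\otimes\Gamma(V,\mathcal{O})\bigr)^{G\times G'}=\Gamma(U,\mathcal{O})^{G}\otimes\Gamma(V,\mathcal{O})^{G'}$ by linear reductivity, so both conditions of Definition \ref{def.goodquotient} hold and uniqueness of good quotients finishes it. Hence $U/G\times V/G'\cong\Aff^{m+n}\sslash_{\chi''}(G\times G')$ is itself a GIT quotient.

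The main obstacle is the inclusion $\subseteq$ of the semistable-locus identity: it is precisely here that the hypothesis that $G$ and $G'$ are tori is indispensable, since the decomposition of a semi-invariant on a product into pieces individually semi-invariant on each factor is a weight-space statement that fails for general reductive groups. A minor point to keep in mind, automatic over affine space, is that the invariant functions used to witness semistability have affine nonvanishing loci; over $\Aff^{m+n}$ one can always take them to be monomials or products of invariants from the factors, whose nonvanishing loci are complements of hypersurfaces and hence affine.
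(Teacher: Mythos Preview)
Your proof is correct and follows essentially the same approach as the paper: the paper establishes the same identity $(\Aff^{m+n})^{ss}_{\chi''}=(\Aff^m)^{ss}_{\chi}\times(\Aff^n)^{ss}_{\chi'}$ via the same two inclusions, using the monomial expansion and the torus hypothesis for $\subseteq$, and then invokes the product-of-good-quotients identity $(U\times V)/(G\times G')\cong U/G\times V/G'$. You have in fact supplied a bit more detail than the paper does, namely the affineness check for the nonvanishing loci and an explicit verification of the good-quotient conditions for the product map.
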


\subsection{Extending to higher dimensions}

\begin{proof}[Proof of Theorem \ref{all-maximal-fans-complete}]
Fix any $n \ge 3$.  Let $N_1 = \Z^3$, $N_2 = \Z^{n-3}$, let $R_1$ be the set of rays in $(N_1)_\R$ given by Proposition \ref{dim3-rays}, and let $R_2$ be the collection of rays in $(N_2)_\R$ generated by $e_1, -e_1, \dots, e_{n-3}, -e_{n-3}$, where $e_1, \dots, e_{n-3}$ is a basis of $N_2$.  We claim that the collection $R$ of rays in $(N_1 \oplus N_2)_\R \cong \R^n$ given by the images of $R_1$ and $R_2$ satisfies the conclusions of the theorem.

Suppose that $\Sigma$ is a maximal fan with respect to $R$.  Then, by Corollary \ref{maximal-fans-in-product} and induction, $\Sigma = \Sigma_1 \times \Sigma_{2,1} \times \cdots \times \Sigma_{2,n-3}$, where $\Sigma_1$ is a maximal fan with respect to $R_1$ and each $\Sigma_{2,i}$ is maximal with respect to $\{\Cone -e_i, \Cone e_i\}$.  Therefore, the corresponding toric variety $X_\Sigma$ is isomorphic to $X_{\Sigma_1} \times (\Pro^1)^{n-3}$.  From this it follows that $X_\Sigma$ is complete because every possible $X_{\Sigma_1}$ is complete and $\Pro^1$ is as well.  Since products of projective varieties are projective, whenever $X_{\Sigma_1}$ is projective, so is $X_\Sigma$.  The converse holds as well since $X_{\Sigma_1} \cong X_{\Sigma_1} \times (*)^{n-3}$ (where $*$ is any closed point of $\Pro^1$) is a closed subset of $X_{\Sigma_1} \times (\Pro^1)^{n-3} = X_\Sigma$, and hence is projective if $X_\Sigma$ is.  Therefore, both projective and non-projective $X_\Sigma$ occur.  Finally, since a product cone $\sigma \times \sigma'$ is simplicial if and only if both $\sigma$ and $\sigma'$ are simplicial, it follows that, in both the projective and non-projective cases, both simplicial and non-simplicial fans occur.
\end{proof}

\begin{proof}[Proof of Theorem \ref{all-maximal-nonprojective-fans-noncomplete}]
The proof proceeds in the same way, except that $N_1 \cong \Z^4$, $N_2 \cong \Z^{n-4}$, and $R_1$ is as in Proposition \ref{dim4-rays}.  Using the same notation as the previous proof, $X_\Sigma$ is complete (respectively projective) if and only if $X_{\Sigma_1}$ is, so we get that $X_\Sigma$ is either projective or non-complete, with both possibilities occurring.  Just as in the previous proof, in both cases, both simplicial and non-simplicial fans occur.
\end{proof}

\section{Examples}







\subsection{A maximal non-complete fan}


Consider the following points in $\R^4$:

\begin{align*}
  u_1 &= (-2, -1, 0, 1) \\
  u_2 &= (2, -1, 0, 1) \\
  u_3 &= (0, -2, -1, 1) \\
  u_4 &= (0, 2, -1, 1) \\
  u_5 &= (-1, 0, -2, 1) \\
  u_6 &= (-1, 0, 2, 1) \\
  u_7 &= (0, 0, 0, -1)
\end{align*}

Let $\rho_i$ be the ray generated by $u_i$, and consider the set $S$ of fans
whose rays are a subset of $\{\rho_i\}_{i = 1}^7$.  Then the fan $\Sigma$ with
maximal cones $(1237)$, $(1267)$, $(3427)$, $(3457)$, $(5617)$, $(5647)$,
$(1357)$, and $(2467)$ (where we have listed only the indices $i$ corresponding
to rays $\rho_i$ included in each cone) is non-complete, and yet is maximal in
$S$.

To see that $\Sigma$ is non-complete, it suffices to verify that $(0, 0, 0, 1)$
is not contained in any maximal cone of $\Sigma$.  To show that $\Sigma$ is
maximal in $S$, consider the following points in $\R^3$:

\begin{align*}
  v_1 &= (-2, -1, 0) \\
  v_2 &= (2, -1, 0) \\
  v_3 &= (0, -2, -1) \\
  v_4 &= (0, 2, -1) \\
  v_5 &= (-1, 0, -2) \\
  v_6 &= (-1, 0, 2)
\end{align*}

Let $V$ be the region of $\R^3$ enclosed inside the surface triangulated with
triangles $(123)$, $(126)$, $(342)$, $(345)$, $(561)$, and $(564)$ (where we
have listed only the indices $i$ corresponding to the vertices $v_i$ of each
triangle).

Then the region in $\R^4$ not in the support of $\Sigma$ is the cone over $V$,
where we identify $\R^3$ with the hyperplane $w = 1$ in $\R^4$ with coordinates
$(x,y,z,w)$.  The question of whether a cone can be added to $\Sigma$ while
remaining within $S$ and maintaining the fan condition is then equivalent to
whether there exists a convex polytope with vertices among $\{v_i\}_{i = 1}^6$
whose relative interior is completely contained within $V$.  However, the points
$v_i$ consist of three distinguished pairs, $\{v_1, v_4\}$, $\{v_2, v_5\}$, and
$\{v_3, v_6\}$, with the property that the relative interior of the line segment
across each pair lies entirely outside $V$.  Any full-dimensional convex
polytope $P$ with vertices among $\{v_i\}_{i = 1}^6$ must include at least four
of the $v_i$, and therefore must contain both vertices belonging to one of these
distinguished pairs.  Thus, the relative interior of $P$ escapes $V$.  This
shows that no member of $S$ containing $\Sigma$ can be complete, and it can also
be verified that all lower-dimensional cones compatible with $\Sigma$ are
already included in $\Sigma$.

It is of particular note that, by raising $V$ higher in the $w$ direction, the corresponding fan can be made to have the complement of its support arbitrarily small while still remaining maximal.

\section{Background on Convexity}

Let $N$ be a free abelian group of rank $n$ and let $M$ be the dual group $\Hom{\Z}{N}{\Z}$.  We denote the pairing between $n \in N$ and $m \in M$ by $\sprod{n}{m}$.  We define $N_\R \definedby N \otimes_\Z \R$, and similarly for $M_\R$ and write the natural extension of the duality pairing without modification.  We will commonly assume some fixed basis of $N$ has been chosen and equip $M$ with the dual basis to identify $N$ and $M$ with $\Z^n$ and $N_\R$ and $M_\R$ with $\R^n$.

If $x, y \in \R^n$, we denote by $[x,y]$ the closed line segment between $x$ and $y$.  Recall that a subset $K \subseteq \R^n$ is called \textit{convex} if, for all $x, y \in K$, $[x,y] \subseteq K$, and that $K$ is called a \textit{cone} if $\lambda x \in K$ for all $x \in K$ and $\lambda \in \R_{\ge 0}$.  By $\Conv X$ we denote the smallest convex set containing $X$, and by $\Cone X$ we denote the smallest \textit{convex} cone containing $X$.

A convex set $K \subseteq \R^n$ is a \textit{convex polytope} if $K = \Conv \{x_1, \dots, x_s\}$ for some $x_1, \dots, x_s \in \R^n$, and is, moreover, called a \textit{lattice polytope} if one can take the $x_i$ to be in $\Z^n$ or a \textit{rational polytope} if one can take the $x_i$ to be in $\Q^n$.  Similarly, a convex cone $\sigma \subseteq \R^n$ is called \textit{polyhedral} if $\sigma = \Cone \{x_1, \dots, x_s\}$ for some $x_1, \dots, x_s \in \R^n$, and is called \textit{rational polyhedral} if, moreover, the $x_i$ can be chosen from $\Z^n$.

An important subclass of convex sets are the \textit{affine} sets, that is, those subsets $A \subseteq \R^n$ such that, whenever $x \neq y \in A$, the entire line through $x$ and $y$ is contained in $A$.  We let $\Affine X$ denote the smallest affine set containing $X$.

If $K$ is convex, then it is naturally a subset of $\Affine K$, and we define the \textit{relative interior}, \textit{relative closure}, and \textit{relative boundary} of $K$ to be the topological interior, closure, and boundary of $K$ as a subset of $\Affine K$.  In addition, we define the dimension $d$ of $K$ to be the dimension of $\Affine K \cong \R^d$.

If $K$ is a closed convex set, then a hyperplane $H$ is said to \textit{support} $K$ if $K$ is contained in one of the closed halfspaces of $H$ and $K \cap H \ne \emptyset$.

If $K$ is any convex set, then a \textit{face} of $K$ is a subset $F \subseteq K$ with the property that, whenever $a,b \in K$ are such that $F \cap \relint [a,b] \ne \emptyset$, then in fact $a, b \in F$.  $F$ is an \textit{exposed face} of $K$ if $F = K \cap H$ for some supporting hyperplane $H$ of $K$.  It is easy to see that every exposed face of $K$ is a face of $K$, and, moreover, if $K$ is a convex polytope or a polyhedral cone, then the converse holds as well.  We will write $F \le K$ to mean that $F$ is a face of $K$.

We say a convex cone $\sigma$ is \textit{strictly convex} if $\{0\}$ is a face of $\sigma$.

If $v \in \R^n$ and $a \in \R$, we define the hyperplane $H_{v,a}$ and the
half-spaces $H_{v,a}^+$ and $H_{v,a}^-$ by
\begin{align*}
  H_{v,a} &= \{x \in \R^n: \sprod{x}{v} = a\} \\
  H_{v,a}^+ &= \{x \in \R^n: \sprod{x}{v} \ge a\} \\
  H_{v,a}^- &= \{x \in \R^n: \sprod{x}{v} \le a\}.
\end{align*}
We abbreviate $H_{v,0}$ by $H_v$ and similarly for $H^+$ and $H^-$.

Given a closed convex set $K \subseteq \R^n$, its \textit{support function} $h_K: \R^n
\to \R$ is defined by
\begin{gather*}
  h_K(v) = \mathrm{sup}_{x \in K}\sprod{x}{v},
\end{gather*}
or, equivalently,
\begin{gather*}
  h_K(v) = \mathrm{inf}\{a \in \R: K \subseteq H_{v,a}^-\}.
\end{gather*}
Then $h_K$ is a convex function \cite[Section 1.7]{Schneider}, and hence is
continuous \cite[Theorem 1.5.1]{Schneider}.

\section{Proof of Theorem \ref{rank2-tori-projective}}

Since the support function of a convex set is continuous, we have the following:

\begin{lemma}\label{zero-in-interior}
  Let $X \subseteq \R^n$ be such that $\mathrm{cone}(X) = \R^n$.  Then the
  origin is an interior point of $\mathrm{conv}(X)$.
\end{lemma}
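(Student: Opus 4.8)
The plan is to argue by contradiction using a separating hyperplane. Suppose the origin is not an interior point of $\Conv(X)$. Since $\Conv(X)$ is a convex set, by the supporting hyperplane theorem there is a nonzero $v \in \R^n$ such that $\sprod{x}{v} \le 0$ for all $x \in \Conv(X)$ — that is, $\Conv(X) \subseteq H_v^-$, and in particular $X \subseteq H_v^-$. (If the origin were outside $\Conv(X)$ entirely we could even get strict separation, but the weaker statement that some supporting hyperplane through $0$ exists is all that is needed, and it holds as soon as $0$ is not in the relative interior of a full-dimensional $\Conv(X)$; the degenerate case where $\Conv(X)$ is not full-dimensional is handled the same way since then $X$ lies in a proper linear subspace.)

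Next I would observe that $H_v^-$ is a convex cone containing $X$, hence it contains $\Cone(X)$ by the minimality in the definition of $\Cone(X)$. But $H_v^- = \{x : \sprod{x}{v} \le 0\}$ is a proper subset of $\R^n$ (for instance $v \notin H_v^-$, or any point with $\sprod{x}{v} > 0$), so $\Cone(X) \subseteq H_v^- \subsetneq \R^n$, contradicting the hypothesis $\Cone(X) = \R^n$. This contradiction shows the origin must be an interior point of $\Conv(X)$.

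The only genuinely delicate point — and the one I would take care to phrase correctly — is the passage from ``$0$ is not an interior point of $\Conv(X)$'' to ``there is a supporting hyperplane of $\Conv(X)$ through $0$.'' This requires knowing that $\Conv(X)$ is full-dimensional, which is itself a consequence of $\Cone(X) = \R^n$ (a set whose conical hull is all of $\R^n$ cannot lie in a proper affine subspace, let alone a proper linear one). Once full-dimensionality is established, a boundary point of a full-dimensional convex set always admits a supporting hyperplane, and $0$ being a non-interior point of a full-dimensional convex set means it is either a boundary point or an exterior point — in both cases a supporting (indeed separating) hyperplane exists. The reference to continuity of the support function flagged in the surrounding text suggests the authors may instead phrase this via $h_{\Conv(X)}$: the origin is interior to $\Conv(X)$ iff $h_{\Conv(X)}(v) > 0$ for every $v \ne 0$, and if this failed for some $v$ then $\sup_{x \in X}\sprod{x}{v} \le 0$, forcing $\Cone(X) \subseteq H_v^-$ and again contradicting $\Cone(X) = \R^n$. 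Either formulation completes the proof; I expect the support-function version is the intended one given the preceding lemma's setup.
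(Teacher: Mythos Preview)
Your argument is correct and essentially matches the paper's. The paper phrases it via the support function, as you anticipated: it sets $r = \min_{u \in S^{n-1}} h_K(u)$ (using continuity of $h_K$ and compactness of $S^{n-1}$), shows $r > 0$ by exactly your half-space argument, and then concludes $B_r(0) \subseteq K$ from the representation $K = \bigcap_{u} H_{u,h_K(u)}^-$.
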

\begin{proof}
  Let $K = \mathrm{conv}(X)$.  Since the interiors of $K$ and $\ol{K}$ coincide
  \cite[Theorem 1.1.14 (a)]{Schneider}, we may assume without loss of generality that
  $K$ is closed.  Let $r = \mathrm{min}\{h_K(u): u \in S^{n-1}\}$ (the minimum
  is attained because $h_K$ is continuous and $S^{n-1}$ is compact) and let
  $u_0$ be such that $r = h_K(u_0)$.  We claim $r > 0$.  If not, then $K
  \subseteq H^-_{u_0,r} \subseteq H_{u_0}^-$ and, since $H_{u_0}^-$ is a convex
  cone, we have that $\mathrm{cone}(K) = \mathrm{cone}(X) \subseteq H_{u_0}^-$,
  contrary to our assumptions on $X$.  Finally, since $K = \cap_{u \in
    S^{n-1}}H_{u, h_K(u)}^-$ \cite[Corollary 1.3.5]{Schneider}, we have that $B_r(0)
  \subseteq K$, so $0 \in \mathrm{int}K$.
\end{proof}

\begin{lemma}\label{rational-separating-plane}
  Let $\sigma$ be a strictly-convex polyhedral cone in $\R^n$.  Then there
  exists $v \in \Q^n$ such that $\sigma \subseteq H_v^+$ and $\sigma \cap H_v =
  \{0\}$.
\end{lemma}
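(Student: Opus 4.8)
The statement says: given a strictly convex polyhedral cone $\sigma$ in $\R^n$, there is a \emph{rational} vector $v \in \Q^n$ with $\sigma \subseteq H_v^+$ and $\sigma \cap H_v = \{0\}$. The natural approach is to first produce such a $v$ over $\R$ and then perturb it to a rational vector while preserving both properties. Since $\sigma$ is strictly convex, $\{0\}$ is a face of $\sigma$, hence an exposed face (the paper notes faces and exposed faces coincide for polyhedral cones), so there is a supporting hyperplane $H_{v_0, a_0}$ with $\sigma \cap H_{v_0,a_0} = \{0\}$; because $\sigma$ is a cone containing $0$, one has $a_0 = 0$ and $\sigma \subseteq H_{v_0}^+$. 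The point is then to replace the real vector $v_0$ by a nearby rational one.

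First I would set up the perturbation concretely. Write $\sigma = \Cone(w_1, \dots, w_k)$ with $w_i$ the (finitely many) ray generators, so $\sigma \cap H_{v_0} = \{0\}$ is equivalent to $\sprod{w_i}{v_0} > 0$ for every $i$ (strict, since none of the rays lie in $H_{v_0}$, as that would put a nonzero point of $\sigma$ in $H_{v_0}$). Now the set $\{v \in \R^n : \sprod{w_i}{v} > 0 \text{ for all } i\}$ is open and nonempty (it contains $v_0$), hence contains a rational point $v$ by density of $\Q^n$ in $\R^n$. For this $v$ we still have $\sprod{w_i}{v} > 0$ for all $i$, so every nonzero element $\sum \lambda_i w_i$ of $\sigma$ (with $\lambda_i \ge 0$, not all zero) satisfies $\sprod{\sum \lambda_i w_i}{v} = \sum \lambda_i \sprod{w_i}{v} > 0$; thus $\sigma \subseteq H_v^+$ and $\sigma \cap H_v = \{0\}$, as required.

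The one genuine subtlety — and the step I'd flag as the main obstacle, though it is minor — is justifying that $\{0\}$ being a face of $\sigma$ really yields a supporting hyperplane through $0$ that meets $\sigma$ only at $0$, i.e. the existence of the real vector $v_0$ with $\sprod{w_i}{v_0} > 0$ for all $i$. One clean way to get this without invoking the exposed-face machinery is to apply Lemma~\ref{zero-in-interior} in the dual: the dual cone $\sigma^\vee = \{v : \sprod{w_i}{v} \ge 0 \ \forall i\}$ has $\Cone(\sigma^\vee)$ equal to... actually more directly, strict convexity of $\sigma$ is equivalent to $\sigma^\vee$ being full-dimensional, so $\relint(\sigma^\vee)$ is nonempty and open, and any $v_0 \in \relint(\sigma^\vee)$ satisfies $\sprod{w_i}{v_0} > 0$ for all $i$. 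Then density of $\Q^n$ finishes it, since $\relint(\sigma^\vee)$ is open. I would present it this way: recall strict convexity $\iff$ $\dim \sigma^\vee = n$, pick an interior point, note the defining inequalities are strict there, intersect with the (open, nonempty) region of strict inequalities and take a rational point. Everything else is the routine verification above that strict positivity on the generators transfers to the whole cone.
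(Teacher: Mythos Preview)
Your argument is correct and is essentially identical to the paper's proof: both write $\sigma = \Cone(w_1,\dots,w_k)$, observe that the set $U = \{v \in \R^n : \sprod{w_i}{v} > 0 \text{ for all } i\}$ is open and (by strict convexity) nonempty, and then pick a rational point by density. You spell out the nonemptiness of $U$ more carefully than the paper does (via the dual cone being full-dimensional), but the structure of the argument is the same.
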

\begin{proof}
  Since $\sigma$ is polyhedral, $\sigma = \mathrm{cone}(x_1, \dots, x_s)$ for
  some $x_1, \dots, x_s \in \R^n$.  Let
  \begin{gather*}
    U = \{v \in \R^n: \sigma \subseteq H_v^+
    \text{ and } \sigma \cap H_v = \{0\}\}.
  \end{gather*}
  It is easy to check that also
  \begin{gather*}
    U = \{v \in \R^n: \sprod{v_i}{v} > 0 \;\forall i = 1, \dots, s\},
  \end{gather*}
  and thus $U$ is open.  Since $\sigma$ is strictly convex, $U$ is non-empty, so
  it contains a rational point.
\end{proof}

\begin{lemma}\label{rational-slice}
  Let $\sigma$ be a strictly-convex rational polyhedral cone in $\R^n$.  Then
  there exists $v \in \Q^n$ such that $\sigma \cap H_{v,1}$ is a lattice
  polytope.
\end{lemma}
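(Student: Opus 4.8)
The plan is to start from the rational separating functional provided by Lemma~\ref{rational-separating-plane} and then rescale it so that the resulting affine slice is not merely a rational polytope but actually a lattice polytope. First I would apply Lemma~\ref{rational-separating-plane} to obtain $w \in \Q^n$ with $\sigma \subseteq H_w^+$ and $\sigma \cap H_w = \{0\}$. Writing $\sigma = \Cone(u_1, \dots, u_s)$ where each $u_i \in \Z^n$ is a primitive ray generator (possible since $\sigma$ is rational polyhedral), the condition $\sigma \cap H_w = \{0\}$ together with $\sigma \subseteq H_w^+$ forces $\sprod{u_i}{w} > 0$ for all $i$; since $w$ and the $u_i$ are rational, each $\sprod{u_i}{w}$ is a positive rational number. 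The slice $\sigma \cap H_{w,1}$ is then the polytope $\Conv\{u_i / \sprod{u_i}{w} : i = 1, \dots, s\}$, which is rational but whose vertices need not be lattice points.

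The key step is to replace $w$ by a scalar multiple $v = \lambda w$ for a suitable positive integer $\lambda$ so that each rescaled ray generator lands on the lattice. Concretely, I would choose $\lambda$ to be a positive integer such that $\lambda \sprod{u_i}{w} \in \Z$ for every $i$ (for instance, a common multiple of the denominators of the finitely many rationals $\sprod{u_i}{w}$) — and in fact I can demand more: choose $\lambda$ so that $\sprod{u_i}{v} = \lambda\sprod{u_i}{w} = 1$ is too strong in general, so instead the honest statement is that $\sigma \cap H_{v,1}$ contains the lattice point $u_i / \sprod{u_i}{w}$ only when $\sprod{u_i}{w}$ divides... Let me reconsider: the cleanest route is to note that $\sigma \cap H_{v,1} = \Conv\{u_i/\sprod{u_i}{v} : i\}$, and I want each $u_i/\sprod{u_i}{v}$ to lie in $\Z^n$. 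Since I only get to scale $v$ globally, this will not hold simultaneously for all $i$ unless the $\sprod{u_i}{v}$ happen to all equal $1$. The correct fix is therefore to rescale the ray \emph{generators} rather than insist on primitivity: for each $i$, let $m_i = \sprod{u_i}{v}$ (a positive integer after clearing denominators) and observe that $m_i u_i \in \Z^n$ pairs with $v$ to give $m_i\sprod{u_i}{v}$... this still is not $1$.

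The genuinely correct approach, which I would carry out, is this: after clearing denominators assume $v \in \Z^n$ and $\sprod{u_i}{v} = c_i \in \Z_{>0}$; then $\sigma \cap H_{v,1}$ is the convex hull of the points $u_i/c_i$. These are rational but the polytope $\sigma \cap H_{v, k}$ for $k$ a common multiple of the $c_i$ is the convex hull of $(k/c_i)u_i$, each of which \emph{is} a lattice point, and $\sigma \cap H_{v,k} = \sigma \cap H_{(1/k)v, 1}$; but $(1/k)v$ need not be integral, which is fine since the Lemma only asks for $v \in \Q^n$, not $v \in \Z^n$. So I would set $v' = v/k$ where $k = \mathrm{lcm}(c_1, \dots, c_s)$; then $\sprod{u_i}{v'} = c_i/k$, so the vertex of the slice corresponding to ray $i$ is $u_i/(c_i/k) = (k/c_i)u_i \in \Z^n$. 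Hence $\sigma \cap H_{v',1} = \Conv\{(k/c_i)u_i : i\}$ is a lattice polytope, and $v' \in \Q^n$ as required. The main obstacle — really the only subtlety — is recognizing that one must \emph{divide} the rational functional (enlarging the slice away from the apex) rather than hunting for a functional on which every primitive generator evaluates to $1$, which would be impossible in general; once that is seen, everything reduces to clearing and balancing denominators, and I would also remark that $\sigma \cap H_{v',1}$ is indeed a bounded polytope precisely because $\sigma \cap H_{v'} = \{0\}$, so no ray of $\sigma$ is parallel to the slicing hyperplane.
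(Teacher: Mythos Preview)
Your final argument is correct and follows essentially the same route as the paper: invoke Lemma~\ref{rational-separating-plane} to obtain a rational functional, observe that the slice has vertices among the rational points $u_i/\sprod{u_i}{v}$, and then rescale the functional by the reciprocal of a common multiple to force those vertices into the lattice. The paper does exactly this but more tersely, simply writing ``rescale $v$ to clear denominators,'' whereas you make the $\mathrm{lcm}$ construction explicit; your middle paragraphs of false starts should of course be excised, but they do not affect the validity of the eventual argument.
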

\begin{proof}
  Since $\sigma$ is a rational polyhedral cone, $\sigma = \mathrm{cone}(x_1,
  \dots, x_s)$ for some $x_1, \dots, x_s \in \Q^n$.  By lemma
  \ref{rational-separating-plane}, there exists $v \in \Q^n$ such that
  $\sprod{x_i}{v} > 0$ for all $i = 1, \dots, s$.  Thus, $H_{v,1} \cap
  \mathrm{cone}(x_i) = \{y_i\}$, where $y_i = x_i/\sprod{x_i}{v}$.  It follows
  that $\sigma \cap H_{v,1} = \mathrm{conv}(y_1, \dots, y_s)$, and therefore
  that the vertices of the convex polytope $K \definedby \sigma \cap H_{v,1}$
  are among $y_1, \dots, y_s$, which all have rational coordinates.  We can thus
  rescale $v$ to clear denominators so that $K$ has integral vertices.
\end{proof}
\begin{theorem}\label{fans-with-big-cones-are-projective}
  Let $\Sigma$ be a fan in $\R^n$ with $t$ rays such that contains a cone
  $\sigma$ with $t-1$ rays and such that $\Sigma$ is maximal among fans with the
  same rays.  Then $\Sigma$ is the normal fan of a lattice polytope, hence
  $X_\Sigma$ is projective.
\end{theorem}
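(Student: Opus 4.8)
The plan is to produce an explicit piecewise-linear function on $\R^n$ whose domains of linearity are exactly the maximal cones of $\Sigma$ and which is strictly convex across each wall, so that $\Sigma$ is its normal fan. Let $\rho_1, \ldots, \rho_t$ be the rays of $\Sigma$, and let $\sigma$ be the distinguished cone with rays, say, $\rho_1, \ldots, \rho_{t-1}$ (omitting only $\rho_t$). First I would observe that, because $\Sigma$ is maximal and $\mathrm{cone}(\rho_1,\ldots,\rho_t) = \R^n$, the cone $\sigma$ must be full-dimensional and strictly convex: it has $t-1$ rays and $\Sigma$ has $t$, and maximality forces as much of space as possible to be covered. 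By Lemma \ref{rational-separating-plane} there is a rational $v$ with $\sigma \subseteq H_v^+$ and $\sigma \cap H_v = \{0\}$, and after rescaling (Lemma \ref{rational-slice}) the slice $P \definedby \sigma \cap H_{v,1}$ is a full-dimensional lattice polytope in the hyperplane $H_{v,1}$.

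The key structural claim is that $\Sigma$ consists of $\sigma$ itself together with one maximal cone for each facet $F$ of $\sigma$ that is ``visible'' — specifically, the cones $\tau_F \definedby \mathrm{cone}(F \cup \{u_t\})$, where $u_t$ is a primitive generator of $\rho_t$, taken over those facets $F$ of $\sigma$ for which $\tau_F$ is strictly convex and meets $\sigma$ exactly in $F$. The point is that, given only $t$ rays and a full-dimensional cone using $t-1$ of them, the only way to attach further cones compatibly is to glue $\rho_t$ onto faces of $\sigma$; maximality then says we attach every such cone that the fan axioms permit, and these are pairwise compatible precisely because any two share a face of $\sigma$ through the common ray $\rho_t$. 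I would prove this by a direct combinatorial argument: any cone of $\Sigma$ other than faces of $\sigma$ must contain $\rho_t$ (else its rays lie among $\rho_1,\ldots,\rho_{t-1}$ and, $\sigma$ being full-dimensional and their cone, it would be a face of $\sigma$), and then intersect-with-$\sigma$-is-a-face analysis pins it down to the form $\tau_F$.

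With the fan thus described, I would build the polytope whose normal fan is $\Sigma$ as follows. Work in the hyperplane $H_{v,1} \cong \R^{n-1}$ containing $P$. Each added maximal cone $\tau_F$ corresponds, after slicing, to the pyramid over the facet $F\cap H_{v,1}$ of $P$ with apex the point where $\rho_t$ meets $H_{v,1}$ — but $\rho_t$ may not meet $H_{v,1}$ in the positive direction, which is the crux. The cleaner route is dual: I construct the support function $h\colon \R^n \to \R$ directly. Set $h$ to agree with a fixed linear functional $\ell_\sigma$ on $\sigma$, and on each $\tau_F$ let $h$ be the unique linear functional agreeing with $\ell_\sigma$ on $F$ and taking a value on $u_t$ strictly below (or above, with consistent sign) the ``flat'' extension of $\ell_\sigma$; rationality of everything in sight lets these slopes be chosen rationally and, by finiteness of the set of facets, simultaneously. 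Then $h$ is well-defined (the pieces agree on the shared facets $F$), piecewise-linear with respect to $\Sigma$, and strictly convex across every wall by construction, so $P_h \definedby \{m : \langle m, u\rangle \le h(u)\ \forall u\}$ is a rational polytope with normal fan $\Sigma$; clearing denominators makes it a lattice polytope, and $X_\Sigma = X_{P_h}$ is projective.

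The main obstacle I anticipate is the second step — rigorously showing that every maximal cone of $\Sigma$ other than $\sigma$ has the form $\tau_F$ for a facet $F$ of $\sigma$, and that \emph{all} such admissible $\tau_F$ are forced into $\Sigma$ by maximality. The forward direction needs care when $\rho_t$ lies in $\mathrm{aff}(F)$ for some facet, or when $u_t$ is a conical combination of rays of $\sigma$ (in which case $\rho_t$ is a ``non-ray'' and $\Sigma = \{$faces of $\sigma\}$, still projective trivially); and the claim that the admissible $\tau_F$ are pairwise face-to-face — hence their union with $\sigma$ really is a fan, so maximality applies — requires checking $\tau_{F} \cap \tau_{F'} = \mathrm{cone}((F\cap F')\cup\{u_t\})$ is a common face, which I would do using strict convexity of $\sigma$ together with Lemma \ref{rational-separating-plane} applied to separate $u_t$ from $\sigma$.
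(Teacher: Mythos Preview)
Your approach is workable, but the paper takes a more direct geometric route that avoids the support-function bookkeeping. Rather than first characterizing the maximal cones of $\Sigma$ and then assembling a strictly convex piecewise-linear function, the paper forms the polytope $P = \Conv(K \cup \{u_\rho\}) \subset \R^n$, where $K = \sigma \cap H_{v,1}$ is the lattice slice from Lemma~\ref{rational-slice} and $u_\rho$ generates the omitted ray. The standing assumption $\Cone(\rho_1,\ldots,\rho_t) = \R^n$ forces $u_\rho$ to lie strictly on the opposite side of $H_v$ from $K$, so $0 \in \mathrm{int}\,P$ by Lemma~\ref{zero-in-interior}; the face fan of $P$ is then a complete fan on exactly the rays of $\Sigma$, it contains $\Sigma$ by an easy check, and maximality gives equality. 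Polar duality then exhibits $\Sigma$ as the normal fan of $P^*$, which rescales to a lattice polytope. This dissolves precisely the obstacle you flag---``$\rho_t$ may not meet $H_{v,1}$''---by taking $u_\rho$ itself as the apex of an $n$-dimensional pyramid rather than working inside the slice hyperplane. Your route would also go through, but needs more care than you indicate: since every $\tau_F$ contains $u_t$, the value $h(u_t)$ is a single number rather than one choice per facet; strict convexity across the walls $\tau_F \cap \tau_{F'}$ is not ``by construction'' from your bend at the walls $\sigma \cap \tau_F$ and must be checked separately; and completeness of $\Sigma$---which the paper gets for free from $0 \in \mathrm{int}\,P$---has to be established before you know $P_h$ is bounded and that its normal fan really is $\Sigma$.
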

\begin{proof}
  By Lemma \ref{rational-slice}, there is some $v \in \Q^n$ such that $K
  \definedby \sigma \cap H_{v,1}$ is a lattice polytope.  Let $\rho$ be the sole
  ray of $\Sigma$ not in $\sigma(1)$.  Then $\rho \cap H_v^+ = 0$, since
  otherwise the cone on the rays of $\Sigma$ could not be all of $\R^n$ - our standing assumption on the fans we consider; hence $\rho \cap H_{v,1} =
  \emptyset$.  Let $u_\rho$ be the generator of $\rho$ and let $P =
  \mathrm{conv}(K \cup \{u_\rho\})$.  Then $\mathrm{cone}(P) = \R^n$ since
  $\Sigma$ is complete, hence $0 \in \mathrm{int}P$.

  We will show that $\Sigma$ is the fan over the proper faces of $P$.  Given this,
  by \cite[Lemma 2.2.3]{Schneider}, each cone of $\Sigma$ is the normal cone of a face
  of the dual polytope $P^*$, making $\Sigma$ the normal fan of $P^*$.  While
  this need not be a lattice polytope, it must have rational vertices since $P$
  does; hence, a suitable multiple $kP^*$ will be a lattice polytope, and
  $\alpha P^*$ has the same normal fan as $P^*$ for all $\alpha > 0$.

  It is well-known that the faces of $P$ have two types: the faces of $K$ and
  the faces $\mathrm{conv}(F \cup \{u_\rho\})$, where $F$ is a face of $K$.
  Clearly $\sigma = \mathrm{cone}(K)$ is the cone over a proper face of $P$, so
  let $\tau$ be any other cone of $\Sigma$ besides $\sigma$.  Then $\tau$ either
  contains $\rho$ or not.  In either case, $\tau \cap \sigma$ must be a face
  $\nu$ of $\sigma$ by the fan condition, so $F = \nu \cap H_{v,1}$ must be a
  face of $K$.  Thus, either $\tau = \mathrm{cone}(F)$ or $\tau =
  \mathrm{cone}(F \cup \{u_\rho\})$, and $\tau$ is the cone over a proper face
  of $P$.

  On the other hand, by \cite[Lemma 1.1.8]{Schneider}, it follows that each ray based at 0
  intersects the boundary of $P$ in exactly one point, so that there is a
  bijection between boundary points of $P$ and rays from 0.  Since the proper
  faces of $P$ satisfy conditions analogous to the fan conditions, the cones
  over the proper faces of $P$ form a fan which contains $\Sigma$ as was just
  shown.  By maximality, these fans are equal.
\end{proof}

\begin{corollary}
  The conclusion of Theorem \ref{fans-with-big-cones-are-projective} with the maximality condition replaced by the condition that
  $\Sigma$ is
  complete.
\end{corollary}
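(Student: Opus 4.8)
The plan is to re-run the proof of Theorem~\ref{fans-with-big-cones-are-projective} essentially verbatim and to replace only its final step — the appeal to maximality — by an argument using completeness. First I would observe that if $\Sigma$ is complete then in particular $\mathrm{cone}(\rho_1,\dots,\rho_t) = \R^n$, so the standing hypothesis used throughout that proof holds; moreover completeness is exactly what is invoked when the proof asserts $\mathrm{cone}(P) = \R^n$ and hence $0 \in \mathrm{int}\,P$. Carrying out that argument produces, exactly as before, the polytope $P = \mathrm{conv}(K \cup \{u_\rho\})$ with the origin in its interior, the conclusion that every cone of $\Sigma$ is the cone over a proper face of $P$, and the fact that the collection $\Sigma_P$ of cones over the proper faces of $P$ is itself a fan containing $\Sigma$. (One should double-check that maximality is not used anywhere else in that proof; inspection shows it is invoked only in the last sentence.)

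Next I would note that $\Sigma_P$ is itself complete: since $0 \in \mathrm{int}\,P$, by \cite[Lemma 1.1.8]{Schneider} every ray from the origin meets $\partial P$ in a unique point, so the cones over the proper faces of $P$ cover $\R^n$. We are thus reduced to the elementary claim that two complete fans $\Sigma \subseteq \Sigma_P$ in $\R^n$ must coincide. For this, recall that in a complete fan every maximal cone is full-dimensional — a point in the relative interior of a maximal cone $\omega$ lies in no other cone of the fan (if it lay in $\omega'$, then $\omega \cap \omega'$ would be a face of $\omega'$ meeting $\relint\omega$, forcing $\omega \subseteq \omega'$ and hence $\omega = \omega'$), so a whole neighborhood of it is covered by $\omega$ alone, whence $\dim\omega = n$. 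Now given a full-dimensional $\tau \in \Sigma_P$, choose $p \in \relint\tau$ avoiding the finitely many proper faces of cones of $\Sigma$ (possible since $\relint\tau$ is open in $\R^n$ and those faces have dimension $<n$); then $p$ lies in the relative interior of some $\omega \in \Sigma \subseteq \Sigma_P$, and $\tau \cap \omega$, being a face of $\tau$ meeting $\relint\tau$ and a face of $\omega$ meeting $\relint\omega$, equals both, so $\tau = \omega \in \Sigma$. Hence $\Sigma$ and $\Sigma_P$ have the same full-dimensional cones; since each complete fan is the set of faces of its maximal (= full-dimensional) cones, $\Sigma = \Sigma_P$.

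Finally, exactly as in the proof of Theorem~\ref{fans-with-big-cones-are-projective}, the fan over the proper faces of $P$ is the normal fan of the polar dual $P^*$ by \cite[Lemma 2.2.3]{Schneider}, and $P^*$ has rational vertices because $P$ is a lattice polytope with $0$ in its interior; rescaling by a suitable integer $k>0$ makes $kP^*$ a lattice polytope with the same normal fan. Therefore $\Sigma = \Sigma_P$ is the normal fan of a lattice polytope, so $X_\Sigma$ is projective. I expect the only point requiring genuine care to be the claim that nested complete fans coincide — in particular the fact that maximal cones of a complete fan are full-dimensional together with the standard ``common face meeting both relative interiors implies equality'' observation for cones in a fan; the remainder is a direct transcription of the already-established theorem.
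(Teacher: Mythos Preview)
Your argument is correct. The paper states this corollary without proof; the intended one-line reduction is presumably that a complete fan is automatically maximal among fans with the same rays---for if $\Sigma \subseteq \Sigma'$ with $\Sigma'(1) \subseteq \Sigma(1)$ and $\tau' \in \Sigma'$, then any point of $\relint\tau'$ lies (by completeness of $\Sigma$) in some $\omega \in \Sigma \subseteq \Sigma'$, and the common face $\tau' \cap \omega$ meeting $\relint\tau'$ forces $\tau'$ to be a face of $\omega \in \Sigma$---so Theorem~\ref{fans-with-big-cones-are-projective} applies directly. Your route, re-running that proof and replacing only its final step by the ``nested complete fans coincide'' claim, is perfectly valid and amounts to the same observation applied specifically to the inclusion $\Sigma \subseteq \Sigma_P$; it is somewhat longer but more self-contained, and your careful justification that maximal cones of a complete fan are full-dimensional (via the complement of the remaining cones being open) is a nice touch that the paper leaves implicit.
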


\begin{corollary}\label{quotients-by-2-tori-are-projective}
  Any complete toric variety $X$ of dimension $d$ whose fan has $d+2$ rays is
  projective.
\end{corollary}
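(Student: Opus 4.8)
The plan is to reduce the corollary to Theorem~\ref{fans-with-big-cones-are-projective} (used in its completeness form, the corollary stated immediately after it) together with the already-known simplicial case. Let $\Sigma$ be the fan of $X$, so $\Sigma$ is a complete fan in $\R^d$ with exactly $d+2$ rays, with primitive generators $\nu_1,\dots,\nu_{d+2}$. The first step is to determine how many rays a full-dimensional cone of $\Sigma$ can have. Since $\Sigma$ is complete, every cone of $\Sigma$ is a face of some full-dimensional cone, so it is enough to understand the full-dimensional ones, and each of these has at least $d$ rays. None of them can use all $d+2$ rays: the support of a complete fan is $\R^d$, so $\Cone(\nu_1,\dots,\nu_{d+2})=\R^d$, which is not strictly convex, whereas every cone of a fan is strictly convex. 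Hence each full-dimensional cone of $\Sigma$ has exactly $d$ or exactly $d+1$ rays.

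Now split into two cases. If every full-dimensional cone of $\Sigma$ has exactly $d$ rays, then each such cone is $d$-dimensional and generated by $d$ rays, hence simplicial; since each cone of $\Sigma$ is a face of a full-dimensional one and a face of a simplicial cone is simplicial, $\Sigma$ is a simplicial fan, and in this case $X$ is projective by the previously known simplicial result \cite{kleinschmidt1991smooth,acampo2003orbit}. Otherwise, some full-dimensional cone $\sigma\in\Sigma$ has exactly $d+1$ rays. Since $\Sigma$ has $d+2$ rays in total, $\sigma$ is a cone with one fewer ray than the total number of rays of $\Sigma$, so the completeness form of Theorem~\ref{fans-with-big-cones-are-projective} applies verbatim to $\Sigma$ and $\sigma$ and shows that $\Sigma$ is the normal fan of a lattice polytope; in particular $X=X_\Sigma$ is projective.

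I do not expect any real obstacle: all the genuine content sits inside Theorem~\ref{fans-with-big-cones-are-projective}, which is already proved, and what is left is the elementary dichotomy that a complete fan on $d+2$ rays either is simplicial or contains a full-dimensional cone on $d+1$ rays. The only facts invoked beyond that theorem are standard structural properties of complete fans (every cone is a face of a full-dimensional cone, cones are strictly convex, and being simplicial passes to faces), which can be cited from \cite{CLS}.
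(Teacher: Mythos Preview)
Your proof is correct and follows essentially the same route as the paper: split into the simplicial case (handled by \cite{kleinschmidt1991smooth,acampo2003orbit}) and the non-simplicial case, where some cone must have exactly $d+1$ rays and Theorem~\ref{fans-with-big-cones-are-projective} (in its completeness form) applies. Your version is slightly more explicit about why no cone can use all $d+2$ rays and why the simplicial condition propagates to faces, but the structure and content are the same.
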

\begin{proof}
  If $X$ is simplicial, then this has already been shown by \cite{acampo2003orbit}.  Otherwise, the fan of $X$ has at least one cone with at least $d+1$
  rays.  Since $X$ would be affine, and hence non-complete, if all rays were in
  the same cone, one cone has exactly $d+1$ rays and the previous theorem
  applies.
\end{proof}

\section{Proof of Theorem \ref{3d-fans-complete}}

\begin{theorem}\label{triangulate-simple-closed-curve-interior}
Let $C \subseteq \R^2$ be a simple closed curve consisting of a finite number of line segments.  Then the region enclosed by $C$ can be triangulated in such a way that all the triangle vertices are among the nodes of $C$.
\end{theorem}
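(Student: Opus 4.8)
The plan is to prove this by induction on the number $n$ of nodes of $C$, using the classical mechanism behind polygon triangulation: a simple polygon with at least four vertices has an interior diagonal, and cutting along it reduces the vertex count. Write $P$ for the closed region enclosed by $C$; that $P$ is a well-defined compact set with boundary exactly $C$ and a well-defined interior $\mathrm{int}(P)$ is the polygonal case of the Jordan curve theorem. List the nodes of $C$ cyclically as $v_1, \dots, v_n$. If three consecutive nodes are collinear with the middle one between the other two, delete it: this changes neither $C$ nor $P$, and any triangulation whose vertices lie among the reduced node set a fortiori has its vertices among the original nodes. So we may assume no three consecutive nodes of $C$ are collinear. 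The base case $n = 3$ is immediate, since then $P$ is a single triangle.

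For the inductive step with $n \ge 4$, call a segment $[v_i, v_j]$ between two non-adjacent nodes a \emph{diagonal} of $P$ if its relative interior is contained in $\mathrm{int}(P)$. Suppose we have produced a diagonal $[v_i, v_j]$. Then it separates $P$ into two closed regions $P_1$, $P_2$, each enclosed by a simple closed polygonal curve whose node set consists of $v_i$, $v_j$ and one of the two arcs of $C$ between them; since $v_i, v_j$ are non-adjacent, each arc contains a node, so each $P_k$ has strictly fewer than $n$ nodes. By the inductive hypothesis each $P_k$ admits a triangulation with vertices among its own nodes, hence among the nodes of $C$; since both triangulations have $[v_i, v_j]$ as an edge with no vertex in its relative interior, their union is a triangulation of $P = P_1 \cup P_2$. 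So it suffices to exhibit a diagonal.

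To find one, pick a node $v$ of $C$ at which the interior angle of $P$ is less than $\pi$ — for instance any vertex of the convex hull of $\{v_1, \dots, v_n\}$, since $P$ lies in that hull, so its interior cone at $v$ is contained in the hull's interior cone at $v$, whose angle is $< \pi$. Let $u$, $w$ be the neighbors of $v$ along $C$ and set $T = \Conv(u, v, w)$. If $T$ contains no node of $C$ besides $u$, $v$, $w$, then $\mathrm{int}(T)$ is disjoint from $C$: an edge of $C$ meeting $\mathrm{int}(T)$ would have to cross $\partial T$, but it cannot cross the edges $[v,u]$ or $[v,w]$ of $C$ transversally (two distinct edges of a simple polygon meet only at a shared node), so it would enter and exit through $[u,w]$, forcing its chord inside $T$ to lie on the line through $u$ and $w$ — a contradiction. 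Since $\mathrm{int}(T)$ contains points of $\mathrm{int}(P)$ arbitrarily near $v$ and is connected and disjoint from $C$, it lies in $\mathrm{int}(P)$; in particular $\relint[u,w] \subseteq \mathrm{int}(P)$ and $[u,w]$ is a diagonal. Otherwise, among the nodes in $T \setminus \{u,v,w\}$ pick $p$ maximizing the distance to the line through $u$ and $w$; the sub-triangle $R \subseteq T$ with apex $v$ cut off by the line through $p$ parallel to $uw$ then contains no node in its interior, so the same crossing argument gives $\mathrm{int}(R) \cap C = \emptyset$, hence $\mathrm{int}(R) \subseteq \mathrm{int}(P)$, and since $\relint[v,p] \subseteq \mathrm{int}(R)$ we conclude $[v,p]$ is a diagonal.

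The convex-geometry bookkeeping just sketched — existence of a convex vertex, the behaviour of chords of a triangle, the choice of $p$ — is routine. The step I expect to be the real obstacle, and which needs care rather than cleverness, is the topological input underlying it: justifying that a connected subset of $\R^2$ disjoint from $C$ that meets $\mathrm{int}(P)$ is contained in $\mathrm{int}(P)$, and that cutting $P$ along a diagonal yields two regions each again enclosed by a simple closed polygonal curve. Both are consequences of the polygonal Jordan curve theorem — $\R^2 \setminus C$ has exactly two components, the bounded one being $\mathrm{int}(P)$, each with boundary $C$ — together with the fact that a segment meeting $C$ only at its endpoints lies entirely in one component of $\R^2 \setminus C$. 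Assembling these facts cleanly is where the proof must be careful.
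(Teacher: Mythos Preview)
Your proof is correct, and the overall architecture matches the paper's: induct on the number of nodes, locate a convex vertex $v$ with neighbours $u,w$, and either clip the ear $\Conv(u,v,w)$ or, when that fails, produce a diagonal from $v$ to some other node and recurse on the two pieces.

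The genuine difference is in how the diagonal is found when the ear is obstructed. You use the classical Meisters-type argument: among nodes in $T=\Conv(u,v,w)$, take the one farthest from the line $uw$, and a short crossing argument shows $[v,p]$ is a diagonal. The paper instead isolates this step as a separate Claim and proves it analytically: it studies the squared-distance function $f(\lambda)$ from $v$ to the nearest boundary point along the ray of slope $\lambda$, proves lemmas about how such functions behave for single line segments, and argues that a point of non-smoothness of $f$ must occur at a visible node. Your argument is considerably shorter and more elementary for this theorem. The paper's longer analytic route is chosen because the lemmas are phrased to require only that each boundary segment is disjoint from the relative interiors of the others, not that $C$ is a single simple closed curve; this lets them be reused verbatim in the next proposition, where the boundary may have several components and dangling segments. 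If you only need the present theorem, your approach is preferable; if you need to carry the argument to the non-simple setting, you would have to check that the farthest-node argument still goes through there.
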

\begin{proof}
  For simplicity, orient $C$ positively with respect to the region $D$ it
  encloses.  Since the sum of the exterior angles of this curve must be $2\pi$,
  at least one node $v_2$ must have positive external angle.  Let $v_1, v_3$ be
  the nodes adjacient to $v_2$.  Then either the interior of the line segment
  $[v_1, v_3]$ is entirely contained in $D$ or it is not.  If it is, divide off
  the triangle $\mathrm{conv}(v_1, v_2, v_3)$.  The remaining portion of the
  region is of the same type as $D$, but its boundary contains one fewer line
  segment.  Since the theorem is clear when $C$ consists of three line segments
  (and $C$ cannot be simple with fewer line segments), the theorem is true by
  induction.  So suppose instead that $[v_1, v_3]$ intersects $C$ at some other
  point $p$.  
  \begin{claim} \label{claim.hard}
There is a node $w$ on $C$ such that the interior of
  $[v_2, w]$ lies entirely within $D$. 
  \end{claim}
  The proof of this claim is quite subtle and we defer its proof to the end of this section.
Given the claim  we can divide $D$ into two
  regions along $[v_2, w]$ and again apply induction to both pieces, since they
  necessarily have fewer line segments bounding them than $D$ does.  
 \end{proof}

\begin{prop}\label{triangulate-bounded-region}
  If an open region $D \subseteq \R^2$ is bounded and $\partial D$ consists of a
  finite number of line segments, then $D$ can be triangulated without adding
  any more nodes to the boundary.
\end{prop}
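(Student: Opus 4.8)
The plan is to reduce to Theorem~\ref{triangulate-simple-closed-curve-interior} by cutting $\overline D$ along finitely many line segments into pieces, each bounded by a \emph{simple} closed polygonal curve whose vertices are nodes of $\partial D$, and then triangulating the pieces one at a time.

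Call a segment $[p,q]$ with $p,q$ nodes of $\partial D$ and $\relint[p,q]\subseteq D$ a \emph{chord}. Since $D$ is open and disjoint from $\partial D$, a chord meets $\partial D$ only at its endpoints and has no node of $\partial D$ in its relative interior; hence, in any triangulation of a polygonal region that uses only nodes of $\partial D$ as vertices, a chord appearing on the boundary occurs as a single, unsubdivided edge. It follows that if we cut $\overline D$ repeatedly along chords and then triangulate each of the finitely many resulting pieces using only nodes of $\partial D$, the triangulations agree along every cut and glue to a triangulation of $D$ of the required kind (in particular no boundary node is added). So it suffices to find a finite sequence of chord-cuts after which every remaining piece is bounded by a simple closed polygonal curve, at which point Theorem~\ref{triangulate-simple-closed-curve-interior} applies to each.

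I would produce such a sequence by induction on a complexity measure of a polygonal region --- for instance the number of connected components of its boundary together with the number of bounding segments, ordered lexicographically. If the boundary is a single simple closed polygonal curve, there is nothing to do. Otherwise it has a defect of one of three kinds: it is disconnected; it pinches at a node where more than two boundary segments meet; or it contains a slit, a chain of segments with $D$ locally on both sides ending at a node $q$. In the first case a chord joining two boundary components strictly lowers the complexity; in the second, a chord issuing from the pinch node into one of the local sectors of $D$ at that node; in the third, a chord from the slit tip $q$ across $D$ to another node. Cutting along such a chord replaces the region by one or two polygonal regions of strictly smaller complexity, and the induction closes.

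The main obstacle is showing, in each of these three situations, that a chord with the prescribed endpoints and relative interior inside $D$ actually exists --- a visibility statement of precisely the delicate flavor of Claim~\ref{claim.hard}. For a disconnected boundary this is the classical fact that a polygon with holes has a diagonal running from one boundary component to another: taking, say, a leftmost node $p$ of one component and inspecting the segments from $p$ to the nodes it ``sees'' produces such a chord. The pinch and slit cases are localized versions of the same reasoning, carried out inside a small sector of $D$ at the offending node and invoking the same kind of visibility argument --- with degeneracies broken exactly as in the proof of Theorem~\ref{triangulate-simple-closed-curve-interior}. Granting these existence statements, checking that the complexity strictly drops at every step and that only nodes of $\partial D$ ever serve as vertices is routine bookkeeping.
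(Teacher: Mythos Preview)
Your strategy---cut $\overline D$ along chords until every piece is bounded by a simple closed polygonal curve, then invoke Theorem~\ref{triangulate-simple-closed-curve-interior} on each piece---is a legitimate and genuinely different route from the paper's. The paper does not first reduce to simple polygons; instead it reruns the ear-clipping procedure of Theorem~\ref{triangulate-simple-closed-curve-interior} \emph{directly} on the non-simple boundary, after doubling the segments of $C\setminus\partial\overline D$ (the slits) so that the outer component becomes a closed walk. The visibility argument needed at each step is then literally the same one already established (Lemma~\ref{nonsmooth-only-at-node}, hence Claim~\ref{claim.hard}), because that lemma only uses that distinct boundary segments have disjoint relative interiors. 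Disconnected boundary components are handled at the very end: any leftover component sits inside one of the triangles produced so far, and the same claim supplies a segment from a triangle vertex to a node of that component.

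The weakness in your write-up is that you defer exactly the part that carries all the content. You say ``granting these existence statements''---but the existence of a chord in each of your three defect cases is the whole difficulty, the precise analog of Claim~\ref{claim.hard}, and you have not supplied it. Moreover, you would need \emph{three} separate visibility lemmas (one per defect type), each tailored to the local geometry at a hole, a pinch, or a slit tip, whereas the paper's approach recycles a single lemma. Your induction also needs a bit more care than ``routine bookkeeping'': in the disconnected case the cut decreases the component count but increases the edge count, so the lexicographic order is essential; in the pinch and slit cases you should verify that the cut actually disconnects the region and that \emph{each} resulting piece has strictly fewer edges than the original. None of this is false, but as written the proposal is a plan rather than a proof.
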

\begin{proof}
  We may assume that $D$ is connected since otherwise we can work on one
  connected component at a time.  Although the boundary $C \definedby \partial
  D$ may not be a simple closed curve as show in Figure \ref{fig.nonclosed},
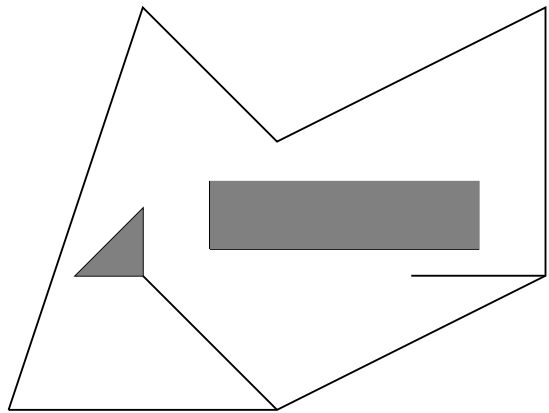
\begin{figure}[h!]\centering
\resizebox{3in}{!}{
\begin{tikzpicture}
\draw (0,0) -- (2,0) -- (4,1) -- (4,3) -- (2,2) -- (1,3) -- (0,0);
\draw (2,0) -- (1,1);
\draw (1,1) -- (1,1.5) -- (0.5,1) -- (1,1);
\fill[gray] (1,1) -- (1,1.5) -- (0.5,1) -- (1,1);
\draw (4,1) -- (3,1);
\draw (1.5,1.2) -- (3.5,1.2) -- (3.5,1.7) -- (1.5,1.7) -- (1.5,1.2);
\fill[gray] (1.5,1.2) -- (3.5,1.2) -- (3.5,1.7) -- (1.5,1.7) -- (1.5,1.2);
\end{tikzpicture}
}
\caption{Example of a boundary curve which is not simple.} \label{fig.nonclosed}
\end{figure}
  we will modify the proof of Theorem
  \ref{triangulate-simple-closed-curve-interior} in the following way.  When
  selecting $v_2$ we will double the line segments of $C\setminus\partial\ol{D}$
  and consider only those components of $C$ adjacent to the unbounded component
  of $\R^2\setminus D$ as indicated in the Figure \ref{fig.doubled}.
\begin{figure}[h!]\centering
\resizebox{3in}{!}{
\begin{tikzpicture}
\draw (0,0) -- (2,0) -- (1,1) -- (0.5,1) -- (1.05,1.5) -- (1.05,1.05) -- (2.1,0) -- (4,1) -- (3,1) -- (3,1.05) -- (4, 1.05) -- (4,3) -- (2,2) -- (1,3) -- (0,0);
\draw (1.5,1.2) -- (3.5,1.2) -- (3.5,1.7) -- (1.5,1.7) -- (1.5,1.2);
\end{tikzpicture}
}
\caption{Doubling the line segments in $C \setminus \partial \overline{D}$.} \label{fig.doubled} 
\end{figure}
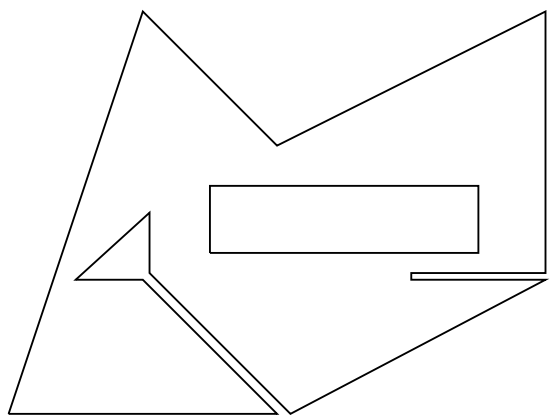

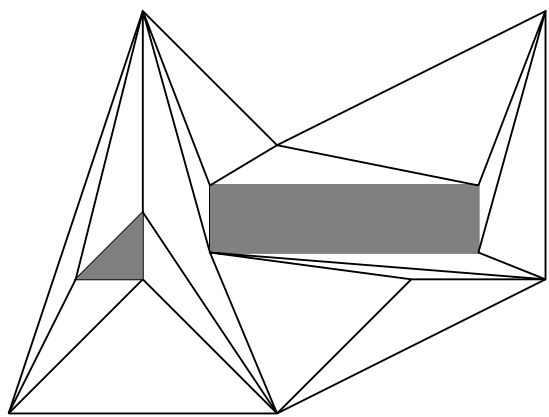
\begin{figure}[h!]\centering
\resizebox{3in}{!}{
\begin{tikzpicture}
\draw (0,0) -- (2,0) -- (4,1) -- (4,3) -- (2,2) -- (1,3) -- (0,0);
\draw (2,0) -- (1,1);
\draw (1,1) -- (1,1.5) -- (0.5,1) -- (1,1);
\fill[gray] (1,1) -- (1,1.5) -- (0.5,1) -- (1,1);
\draw (4,1) -- (3,1);
\draw (1.5,1.2) -- (3.5,1.2) -- (3.5,1.7) -- (1.5,1.7) -- (1.5,1.2);
\fill[gray] (1.5,1.2) -- (3.5,1.2) -- (3.5,1.7) -- (1.5,1.7) -- (1.5,1.2);
\draw (0,0) -- (1,1) (1,1.5) -- (2,0) (1,3) -- (1.5,1.7) (3,1) -- (2,0) (4,3) -- (3.5,1.2) (4,3) -- (3.5,1.7) (3.5,1.7) -- (2,2) (2,2) -- (1.5,1.7) (1,3) -- (0.5,1) (0.5,1) -- (0,0) (4,1) -- (1.5,1.2) (1,1.5) -- (1,3) (1.5,1.2) -- (1,3) (2,0) -- (1.5,1.2) (1.5,1.2) -- (3,1) (4,1) -- (3.5,1.2);
\end{tikzpicture}
}
\caption{Illustration of a triangulation of the region $D$ obtained using the method of proof of Proposition \ref{triangulate-bounded-region}.} \label{fig.triangulated}
\end{figure}

For the remainder of the proof, we
  will take the entire boundary $C$ into account.  Observe that Lemma
  \ref{nonsmooth-only-at-node} doesn't rely on the full strength of $C$ being a
  simple closed curve, only that each segment is disjoint from the relative
  interior of the others, which still holds in this modified case.  The only way
  that the result can fail to be a triangulation of $D$ is if $C$ remains
  disconnected at the end of the process (note that in the last case considered
  by the theorem, a new segment can be added connecting different components of
  $C$).  In this case, each component of $C$ not adjoining the unbounded
  component of $\R^2\setminus D$ must be contained in a triangle.  By letting
  $v_1, v_2, v_3$ be the vertices of this triangle, the last part of the proof
  of Theorem \ref{triangulate-simple-closed-curve-interior} shows that we can
  add a line segment connecting $v_2$ to one of the other components of $C$.
  Then apply the theorem again to the portion of $D$ inside this triangle.
  Since $C$ can have only finitely many components, this process will eventually
  terminate, showing that $D$ can be triangulated as illustrated in Figure
  \ref{fig.triangulated}.
\end{proof}
\begin{theorem}
  Let $\Sigma$ be a fan in $\R^3$ maximal with respect to having the same rays
  as $\Sigma$.  Then either $\ol{\R^3\setminus\mathrm{supp}\,\Sigma}$ is not
  contained in any open half-space or $\Sigma$ is complete.
\end{theorem}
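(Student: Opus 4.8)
The plan is to prove the contrapositive: assuming $W\definedby\ol{\R^3\setminus\supp\Sigma}$ lies in an open half-space, I will show $\supp\Sigma=\R^3$. Suppose not. Then $\R^3\setminus\supp\Sigma$ is nonempty, and since $\supp\Sigma$ is a union of cones it is stable under positive scaling; hence $0\in W$ and $W\ne\{0\}$. Chopping $\R^3$ by the finitely many hyperplanes spanned by the two‑dimensional faces of the cones of $\Sigma$ exhibits $W$ as a finite union of polyhedral cones. A cone through the origin can lie in an open half-space only if there is $v\ne 0$ with $W\setminus\{0\}\subseteq\{x:\sprod{x}{v}>0\}$, equivalently $H_v^-\subseteq\supp\Sigma$ and $W\cap H_v=\{0\}$; since $W\cap S^2$ is compact, $\sprod{\cdot}{v}$ attains a positive minimum $c$ there.

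First I would reduce to a two-dimensional problem by slicing with $H\definedby H_{v,1}\cong\R^2$. Put $D\definedby(\R^3\setminus\supp\Sigma)\cap H$. Because $\R^3\setminus\supp\Sigma$ is scaling-stable and contained in $\{\sprod{x}{v}>0\}$, every point of $W\setminus\{0\}$ scales uniquely into $H$, from which one checks that $D$ is nonempty and open in $H$ with $\ol D=W\cap H$; the bound $c$ forces $W\cap H\subseteq B_{1/c}(0)$, so $\ol D$ is compact. Moreover $\supp\Sigma\cap H=\bigcup_{\sigma\in\Sigma}(\sigma\cap H)$ is a finite union of convex polyhedra, so its topological boundary in $H$, which is $\partial D$, is a finite union of line segments, and each node of $\partial D$ is a vertex of some $\sigma\cap H$, hence of the form $\rho\cap H$ for a ray $\rho\in\Sigma(1)$. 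Thus $D$ is exactly a region of the type handled by Proposition \ref{triangulate-bounded-region}.

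Next I would triangulate and cone off. By Proposition \ref{triangulate-bounded-region}, triangulate $D$ without adding boundary nodes; let $\mathcal T$ be the resulting geometric simplicial complex and let $\Sigma''$ consist of $\{0\}$ together with $\Cone(F)$ for every face $F$ of $\mathcal T$. Then $\Sigma''$ is a fan with $\supp\Sigma''=\Cone(\ol D)=W$, and $\Sigma''(1)\subseteq\Sigma(1)$ since the vertices of $\mathcal T$ are nodes of $\partial D$. Because no boundary node was added, each edge of $\mathcal T$ lying in $\partial D$ is an entire maximal segment of $\partial D$, hence equals $\tau\cap H$ for a two-dimensional face $\tau$ of $\Sigma$; consequently every cone of $\Sigma''$ lying in the common boundary $B\definedby\supp\Sigma\cap W=\partial(\supp\Sigma)$ is already a cone of $\Sigma$, while every remaining cone of $\Sigma''$ has relative interior inside $D\subseteq\R^3\setminus\supp\Sigma$ (and so, by scaling-stability, is disjoint from $\supp\Sigma$ in its relative interior), hence is not a cone of $\Sigma$ and meets $\supp\Sigma$ only along proper faces lying in $B$. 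In particular the cones of $\Sigma''$ contained in $B$ form a common subfan $\Sigma_0=\Sigma\cap\Sigma''$ with $\supp\Sigma_0=B$.

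Finally I would glue: the claim is that $\Sigma\cup\Sigma''$ is again a fan. It is clearly closed under passing to faces, and for $\sigma\in\Sigma$ and $\sigma''\in\Sigma''$ the intersection $\sigma\cap\sigma''$ is contained in $\supp\Sigma\cap\supp\Sigma''=B=\supp\Sigma_0$; writing $\sigma''\cap\supp\Sigma$ as the union of those proper faces of $\sigma''$ which belong to $\Sigma_0$ and intersecting each such face with $\sigma$ inside the fan $\Sigma$, one concludes that $\sigma\cap\sigma''$ is a common face of $\sigma$ and $\sigma''$. Since $D\ne\emptyset$, $\mathcal T$ contains a triangle, so $\Sigma\cup\Sigma''$ is a fan strictly containing $\Sigma$ that uses only rays in $\Sigma(1)$, contradicting the maximality of $\Sigma$ in the sense of Definition \ref{definition-maximal}; therefore $\Sigma$ is complete. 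I expect this last gluing step — checking that $\sigma\cap\sigma''$ is a common face — to be the real obstacle, since it is precisely where the compatibility of the triangulation of $D$ with the two-skeleton of $\Sigma$ along $\partial D$ is used, and there are mild degenerate cases (for instance a ray of $\Sigma$ whose slice is an isolated point of $\partial D$) to dispose of. It is also the step with no higher-dimensional analogue: in dimension $\ge 4$ the slice $D$ is a bounded region of dimension $\ge 3$, for which Proposition \ref{triangulate-bounded-region} fails (the Sch\"onhardt polyhedron admits no triangulation without new vertices), which is exactly why the four-dimensional maximal non-complete example constructed earlier in the paper can exist.
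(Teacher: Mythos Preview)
Your proposal is correct and follows essentially the same approach as the paper: slice by $H_{v,1}$, apply Proposition~\ref{triangulate-bounded-region} to triangulate the bounded planar region without new nodes, cone off, and contradict maximality. If anything, you are more careful than the paper, which dispatches the gluing step in a single sentence; your explicit identification of the compatibility along $\partial D$ as the crux, and your remark on the Sch\"onhardt obstruction in higher dimensions, are both accurate and worth keeping.
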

\begin{proof}
  Suppose $\ol{\R^3\setminus\mathrm{supp}\,\Sigma}$ is contained in the
  half-space $\mathrm{int}\,H^+_v$.  Then $(\R^3\setminus\mathrm{supp}\,\Sigma)
  \cap H_{v,1}$ is a bounded open region in $H_{v,1} \cong \R^2$ whose boundary
  consists of a finite number of line segments.  Applying the corollary shows
  that this region can be triangulated without adding any nodes.  By taking the
  cones over these triangles, we have shown $\R^3\setminus\mathrm{supp}\,\Sigma$
  can be triangulated without adding any more rays.  But then the maximality of
  $\Sigma$ shows that $\Sigma$ must already have been complete.
\end{proof}
\subsection{Proof of Claim \ref{claim.hard}}
  If we knew that $C$ enclosed a convex region the claim would be immediate because any ray from $v_1$ to any another node $w$ of $C$ could only hit $C$ in
  a single point. The dificulty is that in our case such a ray could hit $C$ in other points, not necessarily nodes. 
  
  \begin{lemma}\label{distance-to-line-behavior}
    If $L = \{(x,y): (a,b)\cdot(x,y) = c\}$ is a line not passing through the
    origin and $L_\lambda$ is the line through the origin with slope $\lambda$,
    then
    \begin{gather*}
      f_L(\lambda)
      \definedby\mathrm{dist}^2(0, L \cap L_\lambda)
      = \frac{c^2(1+\lambda^2)}{(a + b\lambda)^2}.
    \end{gather*}
    Moreover, if $M$ is another line such that all the derivatives of $f_L$ and
    $f_M$ agree at some point $\lambda_0$, then $L = \pm M$.
  \end{lemma}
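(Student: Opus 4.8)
The plan is to prove both assertions by direct computation, with the only real care going into the passage from ``all derivatives agree at a point'' to ``the functions agree''.

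First I would establish the formula. The line $L_\lambda$ through the origin of slope $\lambda$ is $\{(t,\lambda t) : t \in \R\}$, so the point $(t,\lambda t)$ lies on $L = \{(x,y) : ax + by = c\}$ exactly when $t(a + b\lambda) = c$. Since $L$ avoids the origin we have $c \ne 0$, and since $L \ne L_\lambda$ the two lines meet in at most one point; when $a + b\lambda \ne 0$ they meet in the single point $\left(\tfrac{c}{a+b\lambda},\tfrac{\lambda c}{a+b\lambda}\right)$, whose squared norm is $\tfrac{c^2(1+\lambda^2)}{(a+b\lambda)^2}$, as claimed. (When $a + b\lambda = 0$ the lines are parallel, $L \cap L_\lambda = \emptyset$, and $f_L(\lambda)$ is undefined, consistently with the right-hand side having a pole at that $\lambda$.) Note the right-hand side is invariant under rescaling $(a,b,c)$, reflecting that $f_L$ depends only on the line $L$.

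For the rigidity statement, write $M = \{(x,y) : a'x + b'y = c'\}$ with $c' \ne 0$, and let $-M \definedby \{(x,y) : a'x + b'y = -c'\}$ denote the reflection of $M$ through the origin; then $f_{-M} = f_M$ because $-M \cap L_\lambda = -(M \cap L_\lambda)$ and negation is an isometry fixing $0$, and ``$L = \pm M$'' means $L \in \{M, -M\}$ (these are distinct, as $L = -L$ would force $c = 0$). Now $f_L$ and $f_M$ are rational functions of $\lambda$, hence so is their difference $g \definedby f_L - f_M$; a nonzero rational function vanishes to only finite order at any point of its domain, so if all derivatives of $g$ vanish at $\lambda_0$ (which lies in the domain of both $f_L$ and $f_M$, implicitly, for the hypothesis to be meaningful), then $g \equiv 0$, i.e. $f_L = f_M$ identically.

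It remains to recover $L$ from $f_L$. From the identity $\tfrac{c^2(1+\lambda^2)}{(a+b\lambda)^2} = \tfrac{c'^2(1+\lambda^2)}{(a'+b'\lambda)^2}$ I would cancel the nonzero polynomial $1 + \lambda^2$ and cross-multiply to get $\bigl(c(a'+b'\lambda)\bigr)^2 = \bigl(c'(a+b\lambda)\bigr)^2$ in $\R[\lambda]$; by unique factorization this gives $c(a'+b'\lambda) = \varepsilon\,c'(a+b\lambda)$ for some $\varepsilon \in \{+1,-1\}$, i.e. $(ca',cb') = \varepsilon c'(a,b)$. Multiplying the defining equation of $M$ by $c$ and substituting then shows $M = \{(x,y) : ax + by = \varepsilon c\}$, which is $L$ when $\varepsilon = 1$ and $-L$ when $\varepsilon = -1$; the same computation also covers the degenerate case $b = 0$, where $f_L$ is a polynomial and the argument forces $b' = 0$ too. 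The whole proof is elementary; the one genuinely delicate point is the reduction in the previous paragraph, which is exactly where the finiteness of the order of vanishing of a rational function is needed, and the sign $\varepsilon$ is precisely the $\pm$ in the conclusion — $f_L$ remembers $L$ only up to reflection through the origin.
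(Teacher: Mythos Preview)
Your proof is correct and follows essentially the same line as the paper's: compute the formula directly, use that rational functions agreeing to all orders at a point must agree identically, then solve $f_L=f_M$ algebraically to conclude $L=\pm M$. The only cosmetic difference is that where the paper expands $c^2(a'+b'\lambda)^2 = c'^2(a+b\lambda)^2$ into a system of three coefficient equations and solves, you invoke $p^2=q^2\Rightarrow p=\pm q$ in the integral domain $\R[\lambda]$, which is a little cleaner but not a different idea.
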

  \begin{proof}
    The first assertion is easily verified.  For the second, since $f_L$ and
    $f_M$ are both rational functions of $\lambda$, they are both analytic on
    their domains, so if all their derivatives agree at $\lambda_0$, then $f_L =
    f_M$ on some neighborhood of $\lambda_0$.  But since these functions are
    rational, this implies they are everywhere equal.

    Writing $M = \{(x,y): (a',b')\cdot(x,y) = c'\}$, we see that
    \begin{gather*}
      \frac{c^2(1+\lambda^2)}{(a + b\lambda)^2}
      = \frac{c'^2(1+\lambda^2)}{(a' + b'\lambda)^2}
    \end{gather*}
    is equivalent to the system
    \begin{align*}
      c^2a'^2 &= c'^2a^2 \\
      c^2a'b' &= c'^2ab \\
      c^2b' &= c'^2b.
    \end{align*}
    By the assumption that $L$ does not pass through the origin, we have that $c \ne 0$.  Letting $\mu = c'/c$, we see that the solutions of this system are
    \begin{gather*}
      a' = \mu a,\; b' = \mu b
      \qqtext{and}
      a' = -\mu a,\; b' = -\mu b,
    \end{gather*}
    and these are equivalent to $M = L$ and $M = -L$ respectively.
  \end{proof}

  \begin{lemma}\label{nonsmooth-only-at-node}
    With the notation of the previous lemma, let $L$ and $M$ be lines not
    passing through the origin and let $a,b,c,d \in \R$ be such that $a \le b
    \le c \le d$, $a < c$, $b < d$, and such that the line segments $S_L
    \definedby \{L \cap L_\lambda: a \le \lambda \le c\}$ and $S_M \definedby
    \{M \cap L_\lambda: b \le \lambda \le d\}$ are closed, lie in the open
    half-plane $x > 0$, and that each is disjoint from the relative interior of the
    other.  Consider the function
    \begin{gather*}
      g(\lambda) = \begin{cases}
        f_L(\lambda) & a \le \lambda < b \\
        \min\{f_L(\lambda), f_M(\lambda)\} & b \le \lambda \le c \\
        f_M(\lambda) & c < \lambda \le d.
      \end{cases}
    \end{gather*}
    Then we claim that 
    \begin{enumerate}
    \item
      If $g$ is smooth, then $L = M$ and $b = c$.

    \item
      The only points where $g$ can fail to be smooth are $b$ and $c$.  If $g$
      is not smooth at $b$, then the point of $L_b \cap (L \cup M)$ closest to
      the origin is an endpoint of $S_M$, and, similarly, if $g$ is not smooth
      at $c$, then the point of $L_c \cap (L \cup M)$ closest to the origin is
      an endpoint of $S_L$.
    \end{enumerate}
  \end{lemma}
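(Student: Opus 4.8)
The plan is to reduce the lemma to two facts about the rational functions $f_L$ and $f_M$: that each is real-analytic on a neighbourhood of the parameter interval where it is used, and that they never take the same value on the open overlap $(b,c)$. For the first, I would note from the formula in Lemma~\ref{distance-to-line-behavior} that $f_L$ is a rational function of $\lambda$ whose denominator vanishes precisely at the value of $\lambda$ for which $L_\lambda$ is parallel to $L$; since $S_L$ lies in $\{x>0\}$, for every $\lambda\in[a,c]$ the line $L_\lambda$ meets $L$ at a point with positive first coordinate and so is not parallel to $L$, whence $f_L$ is analytic on an open neighbourhood of $[a,c]$, and likewise $f_M$ is analytic near $[b,d]$. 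In particular $g$ agrees with the analytic function $f_L$ on $(a,b)$ and with $f_M$ on $(c,d)$, so it is smooth on both intervals.

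The heart of the argument is that $f_L(\lambda)\neq f_M(\lambda)$ for every $\lambda\in(b,c)$. The assignment $\mu\mapsto L\cap L_\mu$ is a homeomorphism of $[a,c]$ onto $S_L$ — continuous since the denominator does not vanish there, injective since distinct lines through $0$ meet $L$ (which avoids $0$) in distinct points — so for $\lambda\in(b,c)\subseteq(a,c)$ the point $L\cap L_\lambda$ lies in $\relint S_L$, and symmetrically $M\cap L_\lambda\in\relint S_M$; the disjointness hypothesis forces these two points to be distinct. Both lie on the half-line $L_\lambda\cap\{x>0\}$, on which the distance to $0$ is a strictly monotone function of the first coordinate, so they are at different distances from $0$, i.e. $f_L(\lambda)\neq f_M(\lambda)$. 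Consequently $\{f_L<f_M\}$ and $\{f_M<f_L\}$ are disjoint, relatively open, and cover the connected interval $(b,c)$, so one of them is all of $(b,c)$; hence $g$ restricted to $(b,c)$ equals $f_L$ throughout or $f_M$ throughout, and is in particular smooth. Together with the previous paragraph this proves the first assertion of~(2): $g$ can fail to be smooth only at $b$ or $c$.

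It remains to examine the breakpoints; by symmetry I treat $b$, taking the principal case $a<b<c<d$. Write $f_\star$ for the one of $f_L,f_M$ that $g$ equals on $(b,c)$. If $f_\star=f_L$, then $g=f_L$ on $(a,b)\cup(b,c)$ while $g(b)=\min\{f_L(b),f_M(b)\}$: when $f_L(b)\le f_M(b)$, $g$ equals $f_L$ near $b$ and is smooth there; when $f_M(b)<f_L(b)$, $g$ is not smooth at $b$, and the point of $L_b\cap(L\cup M)$ closest to the origin is $M\cap L_b$, which is an endpoint of $S_M$ (the image of the endpoint $b$ of $[b,d]$). If $f_\star=f_M$, then $f_M(b)\le f_L(b)$ by letting $\lambda\to b^+$ in $f_M\le f_L$, so $g=f_L$ on $(a,b)$ and $g=f_M$ on $[b,c)$; once continuity has forced $f_L(b)=f_M(b)$, the rigidity part of Lemma~\ref{distance-to-line-behavior} gives that $g$ is smooth at $b$ exactly when $L=M$, and if $g$ is not smooth at $b$ then (again $f_M(b)\le f_L(b)$) the closest point of $L_b\cap(L\cup M)$ to the origin is $M\cap L_b$, an endpoint of $S_M$. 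This proves~(2). For~(1): if $g$ is smooth and $b<c$, running the above at $b$ and at $c$ forces $L=M$ — the only configuration not immediately giving this is $g=f_L$ on $(a,c)$ and $g=f_M$ on $(c,d)$, where continuity at $c$ gives $f_L(c)=f_M(c)$ and then smoothness at $c$ plus Lemma~\ref{distance-to-line-behavior} gives $L=M$ — but $L=M$ makes $f_L\equiv f_M$, contradicting $f_L\neq f_M$ on $(b,c)$; hence $b=c$, and then $g$ equals $f_L$ to the left of $b$ and $f_M$ to the right with $f_L(b)=f_M(b)$ by continuity, so smoothness at $b$ forces $L=M$ once more.

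The step I expect to be the real obstacle is the middle one: proving that $f_L$ and $f_M$ disagree on the \emph{entire} overlap $(b,c)$, so that $\min\{f_L,f_M\}$ is locally just one of them. That is where every hypothesis is used — the segments lying in $\{x>0\}$, and their mutual disjointness from relative interiors — and it is what makes the analytic rigidity of Lemma~\ref{distance-to-line-behavior} applicable. Once that is in hand, the casework at the two breakpoints is routine bookkeeping, requiring only care about the one-sided continuity of $g$.
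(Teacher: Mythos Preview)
Your argument is correct and follows essentially the same route as the paper's: both hinge on the observation that $f_L(\lambda)=f_M(\lambda)$ for some $\lambda\in(b,c)$ would force the relative interiors of $S_L$ and $S_M$ to meet. You organize things a bit more cleanly by proving $f_L\neq f_M$ on all of $(b,c)$ up front and using connectedness to conclude $g$ equals a single one of $f_L,f_M$ there, then deriving part~(1) from the breakpoint analysis; the paper instead proves part~(1) directly via a crossing-point argument and treats smoothness on $(b,c)$ separately.

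One small omission to patch: when you invoke Lemma~\ref{distance-to-line-behavior} at a breakpoint to conclude ``$g$ is smooth at $b$ exactly when $L=M$,'' the lemma only yields $L=\pm M$. You need the half-plane hypothesis to rule out $L=-M$ (if $M=-L$ then for any $\lambda$ in the overlap $[b,c]$ the points $L\cap L_\lambda$ and $M\cap L_\lambda$ are reflections through the origin, so cannot both lie in $\{x>0\}$); the paper makes this step explicit. Also, you restrict to the ``principal case'' $a<b<c<d$ without returning to the degenerate cases $a=b$, $b=c$, or $c=d$; these are easy, but should be dispatched.
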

  \begin{proof}
    From the definition it is clear that $g$ is smooth on $[a,b)$ and $(c,d]$
    and continuous on $(b,c)$.

    Suppose that $g$ is smooth.  Then, since $g$ is continuous, there must be
    some point $\lambda_0 \in [b,c]$ such that $f_L(\lambda_0) =
    f_M(\lambda_0)$; otherwise, $g$ would be discontinuous at either $b$ or $c$.
    Since $f_L$ and $f_M$ are rational, they can only be equal to each other at
    a finite number of points, so there is some neighborhood of $\lambda_0$ such
    that $g = f_L$ on $(\lambda_0-\varepsilon, \lambda_0]$ and $g = f_M$ on
      $[\lambda_0, \lambda_0+\varepsilon)$ or vice-versa.  In either case, $g$
        being smooth at $\lambda_0$ must mean that the derivatives of $f_L$ and
        $f_M$ agree at $\lambda_0$, so by the previous lemma, $L = \pm M$.  By
        our assumption that $S_L$ and $S_M$ both lie in the same half-plane, we
        must have $L = M$, and since we assumed that the relative interiors of
        $S_L$ and $S_M$ are disjoint, it must be that $b = c$.

    We now show that $g$ must be smooth on $(b,c)$.  Since $f_L$ and
    $f_M$ are both smooth, a point where $g$ is not smooth can only occur at
    some $\lambda_0 \in (b,c)$ where $f_L = f_M$.  But then the interior of
    $S_L$ intersects the interior of $S_M$, contrary to our assumptions.

    Finally, we show that if $g$ is not smooth at $b$, then the point of $L_b
    \cap (L \cup M)$ closest to the origin is an endpoint of $S_M$ (the other
    case being essentially the same).  Clearly, if $f_M(b) < f_L(b)$, then this
    is the case by the definitions of $f_L$ and $f_M$.  It is impossible to have
    $f_L(b) < f_M(b)$, since otherwise, $f_L$ and $f_M$ being continuous, we
    would have $g = f_L$ on a neighborhood of $b$, and $g$ would be smooth at
    $b$.  So consider the case when $f_L(b) = f_M(b)$.  We claim that in this
    case $b = c$, so that the point of $L_b \cap (L \cup M)$ closest to the
    origin is an endpoint of both $S_L$ and $S_M$.  This is so since $f_L(b) =
    f_M(b)$ is equivalent to $S_L$ and $S_M$ and $L_b$ intersecting at a common
    point, and we assumed that $S_M$ does not intersect the interior of $S_L$.
  \end{proof}

  We may make a rigid transformation to assume that $v_2 = 0$ is the origin and
  that $v_1$ and $v_3$ are in the open  half-plane $x > 0$.  Let $\lambda_1$ and
  $\lambda_3$ be the slopes of the lines through the origin and $v_1, v_3$
  respectively.  For any $\lambda \in (\lambda_1, \lambda_3)$, let
  $\rho_\lambda$ be the ray through the origin with slope $\lambda$ in the right
  half-plane, and let $f(\lambda) \definedby \mathrm{dist}^2(0, C \cap
  \rho_\lambda)$.  Then $f$ is a minimum of functions of the form
  $f_M|_{[a,b]}$, one for each line segment of $C$.  Since there are only
  finitely many of these and since these are all rational functions, there are
  only a finite number of points where at least two of these functions are
  equal.  Therefore, we can divide $(\lambda_1, \lambda_3)$ into a finite
  number of intervals $J_i, i \in I$ such that 
  \begin{enumerate}
  \item
    the ray $\rho_\lambda$ intersects $C$ in a single segment $S_i$ of the
    boundary for all $\lambda \in J_i$,

  \item
    $f = f_{L_i}$ on $J_i$, where $L_i$ is the line containing $S_i$, i.e.,
    $S_i$ is closer to the origin than any other boundary segment along the rays
    $\rho_\lambda$, $\lambda \in J_i$, and

  \item
    the intervals $J_i$ are disjoint except possibly at their endpoints.
  \end{enumerate}

  If any of the intervals $J_i$ consists of only a single point, then this
  indicates that the line $L_i$ containing the boundary segment $S_i$ intersects
  the origin.  Then the endpoint $w$ of $S_i$ closer to the origin must be
  strictly closer than any other point on the boundary, otherwise the curve $C$
  would intersect itself.  Therefore, $w$ must be the node of $C$ desired.

  If none of the intervals $J_i$ consists of a single point, then we claim $f$
  cannot be smooth.  If it were, then by applying Lemma
  \ref{nonsmooth-only-at-node} at each adjacient pair of intervals $J_i$,
  $J_{i+1}$, we see that all the segments $S_i$ are contained in the same line
  $L$.  Since this line $L$ intersects $[v_1, v_3]$ at the point $p$, it must
  also intersect either $[v_1, v_2]$ or $[v_2, v_3]$, contradicting the
  assumption that $C$ was simple.

  Thus, let $\lambda_0$ be a point where $f$ is not smooth.  It must be the case
  that $\lambda_0$ lies at a boundary between some $J_i$ and $J_{i+1}$.  We
  claim that the point of $\rho_{\lambda_0} \cap C$ closest to the origin must
  be a node.  Otherwise, both segments $S_i$ and $S_{i+1}$ extend past the point
  at slope $\lambda_0$.  Thus, there exists some $c > \lambda_0$ such that $S_i
  \cap \rho_c \neq \emptyset$ and there exists some $b < \lambda$ such that
  $S_{i+1} \cap \rho_b \neq \emptyset$.  Also pick any $a \in J_i$ and $d \in
  J_{i+1}$.  Apply Lemma \ref{nonsmooth-only-at-node} (noting that the $g$ of
  the lemma is equal to $f$ in a neighborhood of $\lambda_0$) to get a
  contradiction, since the supposed point of nonsmoothness is not at $b$ or $d$.
  This proves the claim.

\section{Discussion of Methods of Computation}
To enumerate all the maximal open subsets,
it suffices to find all collections of distinguished polytopes satisfying (i)
and (ii) from Theorem \ref{BBS-good-quotient-characterization}, and then find those maximal with respect to the ``saturated in'' relation in
\ref{saturation}.  For even small values of $n$ this can be quite
computationally difficult: there are, in general, $2^n$ distinguished polytopes,
so $2^{2^n}$ collections to consider.  Our technique for finding those
collections satisfying (i) and (ii) is as follows:

Given an arbitrary collection $\Pi$ of distinguished polytopes, we can enlarge
$\Pi$ by repeatedly adding those polytopes required by (i) and (ii) to obtain
the smallest collection $\ol{\Pi}$ of polytopes satisfying (i) and (ii).  Then,
to enumerate all such collections, we first form $\ol{\{P\}}$ for each
distinguished polytope $P$, then $\ol{\ol{\{P\}} \cup \{Q\}}$ for each $Q \notin
\ol{\{P\}}$, and so on, building up the lattice of suitable collections from the
bottom up.  At each step, the computations of $\ol{C \cup Q}$ for the various $Q \notin C$ can be done in parallel, which we take advantage of.  To
further optimize performance, we perform all the necessary convexity
computations up front, storing the results for later use.

Instead of directly determining which of the corresponding good quotients is projective, we instead use toric GIT to separately enumerate the projective quotients, comparing the results by comparing the corresponding irrelevant ideals (for which we made use of Macaulay2).  For enumerating the GIT quotients, we use Cox, Little, and Schenck's
characterization of the secondary cone.  By determining the
minimal intersections between convex cones spanned by subsets of the dual
characters, we can determine the rays of the secondary fan.  Although we do not
find the higher-dimensional cones of the secondary fan explicitly, we can find
at least one character from each of these cones by taking positive linear
combinations of ray generators, where the number of rays used varies from two to
the dimension of the character lattice.  We then compute the associated polytope
for each character, which allows us to find the corresponding irrelevant ideal.

Once we have descriptions of all the maximal open subsets and their irrelevant ideals, we can use these to determine the corresponding fans.  It is straightforward to check if each fan is simplicial, but much less so if it is complete or not.  We utilize two approaches to check completeness: in one, we generate points at random in $\R^n$ according to a radially-symmetric Gaussian distribution and check to see whether each point is contained in a least one cone in the fan.  While this cannot be used to prove a fan is complete, in the case the fan is non-complete, it can usually be used to verify the non-completeness quickly.  The other approach is to compute the intersection of the complements of the cones.  Since the complement of each cone is a union of open half-spaces, this reduces to checking whether each of a (usually very large) collection of intersections of half spaces is empty.  We used this second approach to verify the completeness of all the maximal fans in Proposition \ref{dim3-rays}.  This approach, however, was computationally infeasible for the fans in Proposition \ref{dim4-rays}; fortunately, as we verified using the probabilistic approach, all the non-projective fans in that case are non-complete, and the remaining fans are complete since they are projective.  As a byproduct, we can estimate the fraction of the unit sphere in the support of each fan, and have tabulated these in the appendix.

Besides the use of Macaulay2 to test equality of the irrelevant ideals, all of our computations were implemented in Python + SAGE, relying on SAGE's implementations of convexity computations and linear algebra.  We expect much better performance could be achieved by re-implementing our algorithm in a faster language such as C or Rust.


\bibliographystyle{plain}
\bibliography{ref}

\pagebreak

\appendix
\section{Tables of open subsets with good quotients}
\subsection{Construction in Proposition \ref{dim3-rays}}

In the following table, the column headings are as follows: ``ND'' = non-degenerate, ``GIT'' = arising from GIT (which, as we have remarked, is equivalent to the corresponding quotient being projective), and ``S'' = simplicial.  Note that we are primarily interested in the non-degenerate quotients, so we have not computed the corresponding fans for the degenerate quotients and do not know whether these are simplicial or not.  All of the maximal fans in this construction were verified to be complete.

\begin{longtable}{l | c | c | c}
\hline
Subset & ND & GIT & S \\
\hline
$X \setminus (Z(b, d) \cup Z(a, f) \cup Z(a, c, d, e) \cup Z(b, f))$ & Yes & Yes & Yes \\
$X \setminus (Z(d) \cup Z(a, c, e, f) \cup Z(b, f))$ & Yes & Yes & Yes \\
$X \setminus (Z(a, e) \cup Z(a, f) \cup Z(b, c, d, f) \cup Z(c, e))$ & Yes & Yes & Yes \\
$X \setminus (Z(b, d) \cup Z(a, c, e) \cup Z(c, d) \cup Z(b, f))$ & Yes & Yes & Yes \\
$X \setminus (Z(b) \cup Z(f) \cup Z(a, c, d, e))$ & Yes & Yes & Yes \\
$X \setminus (Z(c, d) \cup Z(b, f) \cup Z(b, d) \cup Z(a, c, e) \cup Z(a, e, f))$ & Yes & Yes & Yes \\
$X \setminus (Z(b, d, f) \cup Z(a) \cup Z(c, e))$ & Yes & Yes & Yes \\
$X \setminus (Z(b, c, d) \cup Z(a, f) \cup Z(a, e) \cup Z(c, e))$ & Yes & Yes & Yes \\
$X \setminus (Z(a, b, e, f) \cup Z(c, e) \cup Z(d))$ & Yes & Yes & Yes \\
$X \setminus (Z(c) \cup Z(a, b, e, f) \cup Z(d))$ & Yes & Yes & Yes \\
$X \setminus (Z(a, f) \cup Z(a, e) \cup Z(c, d, e) \cup Z(b, f))$ & Yes & Yes & Yes \\
$X \setminus (Z(b, c, d) \cup Z(a, f) \cup Z(a, e) \cup Z(b, f))$ & Yes & Yes & Yes \\
$X \setminus (Z(b) \cup Z(c, d) \cup Z(a, c, e, f))$ & Yes & Yes & Yes \\
$X \setminus (Z(b) \cup Z(a, f) \cup Z(c, d, e))$ & Yes & Yes & Yes \\
$X \setminus (Z(c, e) \cup Z(b, d, f) \cup Z(b, c, d) \cup Z(a, f) \cup Z(a, e))$ & Yes & Yes & Yes \\
$X \setminus (Z(a) \cup Z(b, c, d, e) \cup Z(b, f))$ & Yes & Yes & Yes \\
$X \setminus (Z(c, d, e) \cup Z(b, f) \cup Z(b, c, d) \cup Z(a, f) \cup Z(a, e))$ & Yes & Yes & Yes \\
$X \setminus (Z(b) \cup Z(d) \cup Z(a, c, e, f))$ & Yes & Yes & Yes \\
$X \setminus (Z(a) \cup Z(c, d, e) \cup Z(b, f))$ & Yes & Yes & Yes \\
$X \setminus (Z(b, c, d) \cup Z(a, f) \cup Z(e))$ & Yes & Yes & Yes \\
$X \setminus (Z(b) \cup Z(a, f) \cup Z(a, c, d, e))$ & Yes & Yes & Yes \\
$X \setminus (Z(c) \cup Z(a, b, d, f) \cup Z(e))$ & Yes & Yes & Yes \\
$X \setminus (Z(e) \cup Z(a, b, d, f) \cup Z(c, d))$ & Yes & Yes & Yes \\
$X \setminus (Z(c) \cup Z(b, d) \cup Z(a, b, e, f))$ & Yes & Yes & Yes \\
$X \setminus (Z(b, d) \cup Z(a, f) \cup Z(c, d, e) \cup Z(b, f))$ & Yes & Yes & Yes \\
$X \setminus (Z(c, e) \cup Z(c, d) \cup Z(b, d, f) \cup Z(a, e) \cup Z(a, b, f))$ & Yes & Yes & Yes \\
$X \setminus (Z(a, f) \cup Z(a, e) \cup Z(b, c, d, e) \cup Z(b, f))$ & Yes & Yes & Yes \\
$X \setminus (Z(c, d, e) \cup Z(b, f) \cup Z(b, d) \cup Z(a, f) \cup Z(a, c, e))$ & Yes & Yes & Yes \\
$X \setminus (Z(b, d, f) \cup Z(a, e) \cup Z(c, e) \cup Z(c, d))$ & Yes & Yes & Yes \\
$X \setminus (Z(c) \cup Z(b, d) \cup Z(a, e, f))$ & Yes & Yes & Yes \\
$X \setminus (Z(a) \cup Z(b, c, d, f) \cup Z(c, e))$ & Yes & Yes & Yes \\
$X \setminus (Z(b, d) \cup Z(a, c, e, f) \cup Z(c, d) \cup Z(b, f))$ & Yes & Yes & Yes \\
$X \setminus (Z(a, e) \cup Z(a, b, d, f) \cup Z(c, e) \cup Z(c, d))$ & Yes & Yes & Yes \\
$X \setminus (Z(a, e) \cup Z(b, c, d, e) \cup Z(f))$ & Yes & Yes & Yes \\
$X \setminus (Z(b, d, f) \cup Z(c) \cup Z(a, e))$ & Yes & Yes & Yes \\
$X \setminus (Z(e) \cup Z(c, d) \cup Z(a, b, f))$ & Yes & Yes & Yes \\
$X \setminus (Z(b, d) \cup Z(c, e) \cup Z(c, d) \cup Z(a, b, f))$ & Yes & Yes & Yes \\
$X \setminus (Z(c) \cup Z(a, e) \cup Z(a, b, d, f))$ & Yes & Yes & Yes \\
$X \setminus (Z(d) \cup Z(a, c, e) \cup Z(b, f))$ & Yes & Yes & Yes \\
$X \setminus (Z(a) \cup Z(b, c, d, e) \cup Z(f))$ & Yes & Yes & Yes \\
$X \setminus (Z(b, d) \cup Z(a, e, f) \cup Z(c, d) \cup Z(b, f))$ & Yes & Yes & Yes \\
$X \setminus (Z(b, d) \cup Z(f) \cup Z(a, c, d, e))$ & Yes & Yes & Yes \\
$X \setminus (Z(b, d) \cup Z(a, f) \cup Z(a, c, e) \cup Z(b, f))$ & Yes & Yes & Yes \\
$X \setminus (Z(a, f) \cup Z(b, c, d, f) \cup Z(e))$ & Yes & Yes & Yes \\
$X \setminus (Z(a, e, f) \cup Z(c, d) \cup Z(b))$ & Yes & Yes & Yes \\
$X \setminus (Z(b, d) \cup Z(c, e) \cup Z(c, d) \cup Z(a, e, f))$ & Yes & Yes & Yes \\
$X \setminus (Z(b, d) \cup Z(c, e) \cup Z(c, d) \cup Z(a, b, e, f))$ & Yes & Yes & Yes \\
$X \setminus (Z(c, e) \cup Z(c, d) \cup Z(b, d) \cup Z(a, e, f) \cup Z(a, b, f))$ & Yes & Yes & Yes \\
$X \setminus (Z(a) \cup Z(b, c, d, f) \cup Z(e))$ & Yes & Yes & Yes \\
$X \setminus (Z(d) \cup Z(c, e) \cup Z(a, b, f))$ & Yes & Yes & Yes \\
$X \setminus (Z(a, e) \cup Z(c, e) \cup Z(c, d) \cup Z(a, b, f))$ & Yes & Yes & Yes \\
$X \setminus (Z(b, d) \cup Z(f) \cup Z(a, c, e))$ & Yes & Yes & Yes \\
$X \setminus (Z(b, d, f) \cup Z(a, f) \cup Z(a, e) \cup Z(c, e))$ & Yes & Yes & Yes \\
$X \setminus (Z(b, c, d) \cup Z(a, e) \cup Z(f))$ & Yes & Yes & Yes \\
$X \setminus (Z(c, d, e) \cup Z(b, c, d) \cup Z(a, e, f) \cup Z(b, d, f) \cup Z(a, c, e) \cup Z(a, b, f))$ & Yes & Yes & No \\
$X \setminus (Z(c, d, e) \cup Z(b, d, f) \cup Z(b, c, d) \cup Z(a, f) \cup Z(a, e))$ & Yes & Yes & No \\
$X \setminus (Z(c, d, e) \cup Z(b, f) \cup Z(b, d) \cup Z(a, c, e) \cup Z(a, e, f))$ & Yes & Yes & No \\
$X \setminus (Z(c, d) \cup Z(b, d) \cup Z(a, c, e) \cup Z(a, e, f) \cup Z(a, b, f))$ & Yes & Yes & No \\
$X \setminus (Z(b, d, f) \cup Z(a, e, f) \cup Z(c))$ & Yes & Yes & No \\
$X \setminus (Z(b, c, d) \cup Z(a, f) \cup Z(a, c, e) \cup Z(b, f))$ & Yes & Yes & No \\
$X \setminus (Z(b, d, f) \cup Z(a, e, f) \cup Z(c, e) \cup Z(c, d))$ & Yes & Yes & No \\
$X \setminus (Z(b, d, f) \cup Z(a, f) \cup Z(a, e) \cup Z(c, d, e))$ & Yes & Yes & No \\
$X \setminus (Z(b, d, f) \cup Z(a) \cup Z(c, d, e))$ & Yes & Yes & No \\
$X \setminus (Z(c, d, e) \cup Z(b, d) \cup Z(a, e, f) \cup Z(b, f))$ & Yes & Yes & No \\
$X \setminus (Z(c, e) \cup Z(b, c, d) \cup Z(b, d, f) \cup Z(a, e) \cup Z(a, b, f))$ & Yes & Yes & No \\
$X \setminus (Z(b, c, d) \cup Z(e) \cup Z(a, b, f))$ & Yes & Yes & No \\
$X \setminus (Z(b, c, d) \cup Z(f) \cup Z(a, c, e))$ & Yes & Yes & No \\
$X \setminus (Z(d) \cup Z(a, c, e) \cup Z(a, b, f))$ & Yes & Yes & No \\
$X \setminus (Z(b, d) \cup Z(a, c, e) \cup Z(c, d) \cup Z(a, b, f))$ & Yes & Yes & No \\
$X \setminus (Z(c, e) \cup Z(c, d) \cup Z(b, d, f) \cup Z(a, e, f) \cup Z(a, b, f))$ & Yes & Yes & No \\
$X \setminus (Z(a, e, f) \cup Z(c, d, e) \cup Z(b))$ & Yes & Yes & No \\
$X \setminus (Z(b, c, d) \cup Z(a, e) \cup Z(c, e) \cup Z(a, b, f))$ & Yes & Yes & No \\
$X \setminus (Z(c, d, e) \cup Z(b, f) \cup Z(b, c, d) \cup Z(a, f) \cup Z(a, c, e))$ & Yes & Yes & No \\
\hline
$X \setminus (Z(b, d) \cup Z(a, f) \cup Z(c, e))$ & Yes & No & Yes \\
$X \setminus (Z(a, e) \cup Z(c, d) \cup Z(b, f))$ & Yes & No & Yes \\
$X \setminus (Z(b, c, d) \cup Z(a, e) \cup Z(c, d, e) \cup Z(b, f))$ & Yes & No & No \\
$X \setminus (Z(b, d, f) \cup Z(b, c, d) \cup Z(a, f) \cup Z(c, e))$ & Yes & No & No \\
$X \setminus (Z(a, e, f) \cup Z(a, c, e) \cup Z(c, d) \cup Z(b, f))$ & Yes & No & No \\
$X \setminus (Z(b, d) \cup Z(c, e) \cup Z(a, e, f) \cup Z(a, b, f))$ & Yes & No & No \\
$X \setminus (Z(c, d, e) \cup Z(b, c, d) \cup Z(b, d, f) \cup Z(a, e) \cup Z(a, b, f))$ & Yes & No & No \\
$X \setminus (Z(c, d) \cup Z(b, d, f) \cup Z(a, e, f) \cup Z(a, c, e) \cup Z(a, b, f))$ & Yes & No & No \\
$X \setminus (Z(c, d, e) \cup Z(b, f) \cup Z(b, c, d) \cup Z(a, e, f) \cup Z(a, c, e))$ & Yes & No & No \\
$X \setminus (Z(c, d, e) \cup Z(b, d, f) \cup Z(b, c, d) \cup Z(a, f) \cup Z(a, c, e))$ & Yes & No & No \\
$X \setminus (Z(c, d, e) \cup Z(b, d) \cup Z(a, c, e) \cup Z(a, e, f) \cup Z(a, b, f))$ & Yes & No & No \\
$X \setminus (Z(b, d) \cup Z(a, f) \cup Z(a, c, e) \cup Z(c, d, e))$ & Yes & No & No \\
$X \setminus (Z(b, d, f) \cup Z(a, e) \cup Z(c, d) \cup Z(a, b, f))$ & Yes & No & No \\
$X \setminus (Z(c, e) \cup Z(b, c, d) \cup Z(a, e, f) \cup Z(b, d, f) \cup Z(a, b, f))$ & Yes & No & No \\
\hline
$X \setminus (Z(b) \cup Z(f))$ & No & Yes & --- \\
$X \setminus (Z(c) \cup Z(e))$ & No & Yes & --- \\
$X \setminus (Z(b))$ & No & Yes & --- \\
$X \setminus (Z(b) \cup Z(d))$ & No & Yes & --- \\
$X \setminus (Z(a) \cup Z(f))$ & No & Yes & --- \\
$X \setminus (Z(c) \cup Z(d))$ & No & Yes & --- \\
$X \setminus (Z(a) \cup Z(e))$ & No & Yes & --- \\
$X \setminus (Z(d))$ & No & Yes & --- \\
$X \setminus (Z(a))$ & No & Yes & --- \\
$X \setminus (Z(f))$ & No & Yes & --- \\
$X \setminus (Z(e))$ & No & Yes & --- \\
$X \setminus (Z(c))$ & No & Yes & --- \\
$X$ & No & No & --- \\
\hline
\end{longtable}

\subsection{Construction in Proposition \ref{dim4-rays}}

In the following table, the column headings are as in the previous table, with the additional ``\%C'' = estimated percent complete.  This is computed by intersecting the support of the corresponding fan with the unit sphere and computing the fraction of the unit sphere covered, which we have estimated by taking a random sampling of points uniformly distributed over the unit sphere and counting how many points are in the support of the fan.  It should be noted that this is \textbf{not} an invariant of the corresponding toric variety, since any lattice automorphism of the lattice $N$ of one-parameter subgroups induces an automorphism of the corresponding toric variety, but only orthogonal lattice automorphisms will preserve this measure of completeness.

Note that all GIT quotients are automatically complete, so the blanks for the GIT quotients are all 100\%.  On the other hand, as in the previous, we have not computed the fans of the degenerate quotients and so do not know how complete they are.

\begin{tiny}
\begin{longtable}{l | c | c | c | c}
\hline
Subset & ND & GIT & S & \%C \\
\hline
$X \setminus (Z(a, c, e, g) \cup Z(b, f) \cup Z(c, d, g) \cup Z(b, d) \cup Z(a, e, f))$ & Yes & Yes & Yes \\
$X \setminus (Z(a) \cup Z(b, f) \cup Z(c, d, e, g))$ & Yes & Yes & Yes \\
$X \setminus (Z(b) \cup Z(c, d, g) \cup Z(a, e, f))$ & Yes & Yes & Yes \\
$X \setminus (Z(c) \cup Z(a, e) \cup Z(b, d, f, g))$ & Yes & Yes & Yes \\
$X \setminus (Z(b, d, f) \cup Z(c, d) \cup Z(c, e) \cup Z(a, e, g))$ & Yes & Yes & Yes \\
$X \setminus (Z(c, d) \cup Z(b, f, g) \cup Z(b, d) \cup Z(a, c, e) \cup Z(a, e, f, g))$ & Yes & Yes & Yes \\
$X \setminus (Z(b, d, f, g) \cup Z(c, d) \cup Z(c, e) \cup Z(a, e))$ & Yes & Yes & Yes \\
$X \setminus (Z(a) \cup Z(b, d, f, g) \cup Z(c, e))$ & Yes & Yes & Yes \\
$X \setminus (Z(c, d) \cup Z(c, e) \cup Z(b, d, f) \cup Z(a, e, g) \cup Z(a, b, f, g))$ & Yes & Yes & Yes \\
$X \setminus (Z(b, d, f, g) \cup Z(c, e) \cup Z(a, e) \cup Z(b, c, d, g) \cup Z(a, f))$ & Yes & Yes & Yes \\
$X \setminus (Z(c, d) \cup Z(c, e) \cup Z(a, e, f, g) \cup Z(b, d))$ & Yes & Yes & Yes \\
$X \setminus (Z(c, d) \cup Z(b, f) \cup Z(a, e, f, g) \cup Z(b, d))$ & Yes & Yes & Yes \\
$X \setminus (Z(b, d, f, g) \cup Z(a, e) \cup Z(c, e) \cup Z(a, f))$ & Yes & Yes & Yes \\
$X \setminus (Z(a, b, f) \cup Z(c, d, g) \cup Z(c, d, e) \cup Z(b, f, g) \cup Z(a, e))$ & Yes & Yes & Yes \\
$X \setminus (Z(c) \cup Z(a, e, f, g) \cup Z(b, d))$ & Yes & Yes & Yes \\
$X \setminus (Z(b, f) \cup Z(c, d, g) \cup Z(b, c, d) \cup Z(a, e, g) \cup Z(a, e, f))$ & Yes & Yes & Yes \\
$X \setminus (Z(a) \cup Z(c, d, e) \cup Z(b, f, g))$ & Yes & Yes & Yes \\
$X \setminus (Z(c) \cup Z(b, d, f) \cup Z(a, e, g))$ & Yes & Yes & Yes \\
$X \setminus (Z(b, f) \cup Z(b, d) \cup Z(c, d, g) \cup Z(a, e, f))$ & Yes & Yes & Yes \\
$X \setminus (Z(c, d) \cup Z(b) \cup Z(a, e, f, g))$ & Yes & Yes & Yes \\
$X \setminus (Z(a, c, e, g) \cup Z(b, f) \cup Z(b, d) \cup Z(a, f) \cup Z(c, d, e, g))$ & Yes & Yes & Yes \\
$X \setminus (Z(b) \cup Z(a, f) \cup Z(c, d, e, g))$ & Yes & Yes & Yes \\
$X \setminus (Z(c, d) \cup Z(b, f, g) \cup Z(a, c, e) \cup Z(b, d, f) \cup Z(a, e, g))$ & Yes & Yes & Yes \\
$X \setminus (Z(c, d, e) \cup Z(b, f, g) \cup Z(a, e) \cup Z(b, c, d, g) \cup Z(a, f))$ & Yes & Yes & Yes \\
$X \setminus (Z(a, e) \cup Z(b, f) \cup Z(c, d, g))$ & Yes & Yes & Yes \\
$X \setminus (Z(a, e) \cup Z(c, d, e) \cup Z(a, f) \cup Z(b, f, g))$ & Yes & Yes & Yes \\
$X \setminus (Z(b, f) \cup Z(b, c, d) \cup Z(a, f) \cup Z(a, e, g) \cup Z(c, d, e, g))$ & Yes & Yes & Yes \\
$X \setminus (Z(b, f, g) \cup Z(c, d) \cup Z(a, e))$ & Yes & Yes & Yes \\
$X \setminus (Z(b, d, f, g) \cup Z(c, d) \cup Z(c, e) \cup Z(a, e) \cup Z(a, b, f, g))$ & Yes & Yes & Yes \\
$X \setminus (Z(c, d) \cup Z(a, c, e, g) \cup Z(b, f) \cup Z(b, d) \cup Z(a, e, f, g))$ & Yes & Yes & Yes \\
$X \setminus (Z(b, d, f, g) \cup Z(a, b, f) \cup Z(c, e) \cup Z(c, d, g) \cup Z(a, e))$ & Yes & Yes & Yes \\
$X \setminus (Z(b, f) \cup Z(a, e) \cup Z(b, c, d, g) \cup Z(a, f) \cup Z(c, d, e, g))$ & Yes & Yes & Yes \\
$X \setminus (Z(a, e) \cup Z(b, f) \cup Z(a, f) \cup Z(c, d, e, g))$ & Yes & Yes & Yes \\
$X \setminus (Z(c, d) \cup Z(b, f) \cup Z(a, e, g))$ & Yes & Yes & Yes \\
$X \setminus (Z(c, d, e, g) \cup Z(b, f) \cup Z(a, f) \cup Z(b, d))$ & Yes & Yes & Yes \\
$X \setminus (Z(c, d) \cup Z(c, e) \cup Z(b, d) \cup Z(a, e, f, g) \cup Z(a, b, f, g))$ & Yes & Yes & Yes \\
$X \setminus (Z(a, c, e, g) \cup Z(b, f) \cup Z(c, d, g) \cup Z(b, c, d) \cup Z(a, e, f))$ & Yes & Yes & No \\
$X \setminus (Z(c, d) \cup Z(a, c, e) \cup Z(b, d, f) \cup Z(a, e, f, g) \cup Z(a, b, f, g))$ & Yes & Yes & No \\
$X \setminus (Z(b, d, f, g) \cup Z(a, b, f) \cup Z(c, d, e) \cup Z(a, e) \cup Z(b, c, d, g))$ & Yes & Yes & No \\
$X \setminus (Z(a, c, e, g) \cup Z(b, f) \cup Z(b, c, d) \cup Z(c, d, e, g) \cup Z(a, e, f))$ & Yes & Yes & No \\
$X \setminus (Z(a, c, e, g) \cup Z(b, f) \cup Z(c, d, g) \cup Z(b, d) \cup Z(a, e, f, g))$ & Yes & Yes & No \\
$X \setminus (Z(c, d) \cup Z(a, c, e, g) \cup Z(b, f, g) \cup Z(b, d, f) \cup Z(a, e, f, g))$ & Yes & Yes & No \\
$X \setminus (Z(a, b, f) \cup Z(b, f, g) \cup Z(a, e) \cup Z(b, c, d, g) \cup Z(c, d, e, g))$ & Yes & Yes & No \\
$X \setminus (Z(b, f) \cup Z(c, d, g) \cup Z(a, e, g) \cup Z(a, e, f))$ & Yes & Yes & No \\
$X \setminus (Z(b, d, f, g) \cup Z(c, e) \cup Z(c, d, g) \cup Z(a, e) \cup Z(a, b, f, g))$ & Yes & Yes & No \\
$X \setminus (Z(b, f) \cup Z(c, d, g) \cup Z(a, e, g) \cup Z(b, c, d))$ & Yes & Yes & No \\
$X \setminus (Z(a, e) \cup Z(b, c, d, g) \cup Z(b, f) \cup Z(c, d, e, g))$ & Yes & Yes & No \\
$X \setminus (Z(b, f, g) \cup Z(a, e) \cup Z(b, c, d, g) \cup Z(a, f) \cup Z(c, d, e, g))$ & Yes & Yes & No \\
$X \setminus (Z(a, b, f) \cup Z(c, d, e) \cup Z(b, f, g) \cup Z(a, e) \cup Z(b, c, d, g))$ & Yes & Yes & No \\
$X \setminus (Z(c, d) \cup Z(a, c, e) \cup Z(b, d, f) \cup Z(a, e, g) \cup Z(a, b, f, g))$ & Yes & Yes & No \\
$X \setminus (Z(a, e) \cup Z(a, f) \cup Z(b, f, g) \cup Z(c, d, e, g))$ & Yes & Yes & No \\
$X \setminus (Z(c, d) \cup Z(b, f) \cup Z(a, e, f, g) \cup Z(a, c, e, g))$ & Yes & Yes & No \\
$X \setminus (Z(c, d) \cup Z(b, f, g) \cup Z(a, c, e) \cup Z(b, d, f) \cup Z(a, e, f, g))$ & Yes & Yes & No \\
$X \setminus (Z(b, d, f, g) \cup Z(c, d) \cup Z(a, e) \cup Z(a, b, f, g))$ & Yes & Yes & No \\
$X \setminus (Z(c, d, e, g) \cup Z(b, f) \cup Z(a, e, f) \cup Z(b, d))$ & Yes & Yes & No \\
$X \setminus (Z(b, d, f, g) \cup Z(a, e) \cup Z(c, d, e) \cup Z(a, f))$ & Yes & Yes & No \\
$X \setminus (Z(b, d, f, g) \cup Z(c, d) \cup Z(a, c, e) \cup Z(a, e, g) \cup Z(a, b, f, g))$ & Yes & Yes & No \\
$X \setminus (Z(b, d, f, g) \cup Z(c, d, g) \cup Z(c, d, e) \cup Z(a, e) \cup Z(a, b, f, g))$ & Yes & Yes & No \\
$X \setminus (Z(b) \cup Z(c, d, g) \cup Z(a, e, f, g))$ & Yes & Yes & No \\
$X \setminus (Z(a, e, g) \cup Z(c, d) \cup Z(b, f, g) \cup Z(a, c, e))$ & Yes & Yes & No \\
$X \setminus (Z(a, b, f) \cup Z(a, e) \cup Z(c, d, g) \cup Z(b, f, g))$ & Yes & Yes & No \\
$X \setminus (Z(b, f) \cup Z(a, e, f, g) \cup Z(c, d, g) \cup Z(b, d))$ & Yes & Yes & No \\
$X \setminus (Z(a) \cup Z(b, d, f, g) \cup Z(c, d, e))$ & Yes & Yes & No \\
$X \setminus (Z(b, d, f, g) \cup Z(c, d) \cup Z(c, e) \cup Z(a, e, g))$ & Yes & Yes & No \\
$X \setminus (Z(b, d, f, g) \cup Z(a, b, f) \cup Z(c, d, g) \cup Z(c, d, e) \cup Z(a, e))$ & Yes & Yes & No \\
$X \setminus (Z(b, d, f) \cup Z(c, d) \cup Z(c, e) \cup Z(a, e, f, g))$ & Yes & Yes & No \\
$X \setminus (Z(b, d, f, g) \cup Z(c, d, e) \cup Z(a, e) \cup Z(b, c, d, g) \cup Z(a, f))$ & Yes & Yes & No \\
$X \setminus (Z(b) \cup Z(c, d, e, g) \cup Z(a, e, f))$ & Yes & Yes & No \\
$X \setminus (Z(b, f) \cup Z(b, c, d, g) \cup Z(a, f) \cup Z(a, e, g) \cup Z(c, d, e, g))$ & Yes & Yes & No \\
$X \setminus (Z(a, e) \cup Z(c, d, g) \cup Z(c, d, e) \cup Z(b, f, g))$ & Yes & Yes & No \\
$X \setminus (Z(a, c, e, g) \cup Z(b, f) \cup Z(c, d, g) \cup Z(b, c, d) \cup Z(a, e, f, g))$ & Yes & Yes & No \\
$X \setminus (Z(a) \cup Z(b, f, g) \cup Z(c, d, e, g))$ & Yes & Yes & No \\
$X \setminus (Z(b, f, g) \cup Z(c, d, g) \cup Z(a, e, g))$ & Yes & Yes & No \\
$X \setminus (Z(c, d) \cup Z(a, c, e, g) \cup Z(b, f, g) \cup Z(b, d) \cup Z(a, e, f, g))$ & Yes & Yes & No \\
$X \setminus (Z(c) \cup Z(b, d, f) \cup Z(a, e, f, g))$ & Yes & Yes & No \\
$X \setminus (Z(a, c, e, g) \cup Z(b, f) \cup Z(b, c, d) \cup Z(a, f) \cup Z(c, d, e, g))$ & Yes & Yes & No \\
$X \setminus (Z(a, e, g) \cup Z(c, d) \cup Z(b, d, f) \cup Z(b, f, g))$ & Yes & Yes & No \\
$X \setminus (Z(c, d) \cup Z(a, c, e) \cup Z(b, d) \cup Z(a, e, f, g) \cup Z(a, b, f, g))$ & Yes & Yes & No \\
$X \setminus (Z(b, d, f, g) \cup Z(a, b, f) \cup Z(c, e) \cup Z(a, e) \cup Z(b, c, d, g))$ & Yes & Yes & No \\
$X \setminus (Z(b, d, f, g) \cup Z(c, d) \cup Z(c, e) \cup Z(a, e, g) \cup Z(a, b, f, g))$ & Yes & Yes & No \\
$X \setminus (Z(b, f) \cup Z(b, c, d) \cup Z(c, d, e, g) \cup Z(a, e, g) \cup Z(a, e, f))$ & Yes & Yes & No \\
$X \setminus (Z(a, c, e, g) \cup Z(b, f) \cup Z(b, d) \cup Z(c, d, e, g) \cup Z(a, e, f))$ & Yes & Yes & No \\
$X \setminus (Z(c, d) \cup Z(c, e) \cup Z(b, d, f) \cup Z(a, e, f, g) \cup Z(a, b, f, g))$ & Yes & Yes & No \\
$X \setminus (Z(b, f) \cup Z(b, c, d, g) \cup Z(c, d, e, g) \cup Z(a, e, g) \cup Z(a, e, f))$ & Yes & Yes & No \\
$X \setminus (Z(c) \cup Z(b, d, f, g) \cup Z(a, e, g))$ & Yes & Yes & No \\
\hline
$X \setminus (Z(c, d, e) \cup Z(b, f, g) \cup Z(b, d) \cup Z(a, c, e) \cup Z(a, f))$ & Yes & No & Yes & 93.00\% \\
$X \setminus (Z(c, e) \cup Z(c, d, g) \cup Z(b, c, d) \cup Z(b, d, f) \cup Z(a, b, f) \cup Z(a, e, g) \cup Z(a, e, f))$ & Yes & No & Yes & 91.70\% \\
$X \setminus (Z(c, e) \cup Z(c, d, g) \cup Z(b, d) \cup Z(a, b, f) \cup Z(a, e, f))$ & Yes & No & Yes & 94.30\% \\
$X \setminus (Z(c, d, g) \cup Z(c, d, e) \cup Z(a, c, e) \cup Z(b, f, g) \cup Z(b, d) \cup Z(a, b, f) \cup Z(a, e, f))$ & Yes & No & Yes & 94.10\% \\
$X \setminus (Z(c, d, e) \cup Z(b, f, g) \cup Z(b, c, d) \cup Z(b, d, f) \cup Z(a, c, e) \cup Z(a, f) \cup Z(a, e, g))$ & Yes & No & Yes & 93.40\% \\
$X \setminus (Z(a, c, e) \cup Z(b, c, d) \cup Z(a, b, f) \cup Z(a, e, g) \cup Z(a, e, f) \cup Z(c, d, g) \cup Z(c, d, e) \cup Z(b, f, g) \cup Z(b, d, f))$ & Yes & No & Yes & 94.40\% \\
$X \setminus (Z(c, e) \cup Z(b, c, d) \cup Z(b, d, f) \cup Z(a, f) \cup Z(a, e, g))$ & Yes & No & Yes & 92.80\% \\
$X \setminus (Z(c, e) \cup Z(a, f) \cup Z(b, d))$ & Yes & No & Yes & 93.80\% \\
$X \setminus (Z(c, d, e) \cup Z(a, c, e) \cup Z(b, c, d) \cup Z(b, d, f) \cup Z(a, f) \cup Z(a, e, g))$ & Yes & No & No & 93.40\% \\
$X \setminus (Z(c, d, e) \cup Z(a, c, e) \cup Z(b, c, d) \cup Z(b, f, g) \cup Z(a, b, f) \cup Z(b, d, f) \cup Z(a, e, f))$ & Yes & No & No & 92.90\% \\
$X \setminus (Z(c, d, e) \cup Z(a, c, e) \cup Z(b, c, d) \cup Z(b, d, f) \cup Z(a, f))$ & Yes & No & No & 93.10\% \\
$X \setminus (Z(c, d, e) \cup Z(a, c, e) \cup Z(b, c, d) \cup Z(b, f, g) \cup Z(a, b, f) \cup Z(b, d, f) \cup Z(a, e, g) \cup Z(a, e, f))$ & Yes & No & No & 93.90\% \\
$X \setminus (Z(c, e) \cup Z(b, c, d) \cup Z(b, d, f) \cup Z(a, b, f) \cup Z(a, e, f))$ & Yes & No & No & 93.30\% \\
$X \setminus (Z(c, d, e) \cup Z(a, c, e) \cup Z(b, f, g) \cup Z(b, d) \cup Z(a, b, f) \cup Z(a, e, f))$ & Yes & No & No & 93.50\% \\
$X \setminus (Z(b, d, f) \cup Z(c, e) \cup Z(a, f) \cup Z(b, c, d))$ & Yes & No & No & 93.70\% \\
$X \setminus (Z(c, d, g) \cup Z(c, d, e) \cup Z(a, c, e) \cup Z(b, d) \cup Z(a, b, f) \cup Z(a, e, f))$ & Yes & No & No & 93.60\% \\
$X \setminus (Z(a, b, f) \cup Z(c, e) \cup Z(a, e, f) \cup Z(b, d))$ & Yes & No & No & 92.50\% \\
$X \setminus (Z(c, d, e) \cup Z(a, c, e) \cup Z(b, d) \cup Z(a, b, f) \cup Z(a, e, f))$ & Yes & No & No & 92.60\% \\
$X \setminus (Z(c, d, g) \cup Z(c, d, e) \cup Z(a, c, e) \cup Z(b, c, d) \cup Z(b, d, f) \cup Z(a, b, f) \cup Z(a, e, g) \cup Z(a, e, f))$ & Yes & No & No & 94.80\% \\
$X \setminus (Z(c, d, g) \cup Z(c, d, e) \cup Z(a, c, e) \cup Z(b, c, d) \cup Z(b, f, g) \cup Z(a, b, f) \cup Z(b, d, f) \cup Z(a, e, f))$ & Yes & No & No & 93.50\% \\
$X \setminus (Z(c, d, e) \cup Z(a, c, e) \cup Z(b, c, d) \cup Z(b, d, f) \cup Z(a, b, f) \cup Z(a, e, g) \cup Z(a, e, f))$ & Yes & No & No & 92.40\% \\
$X \setminus (Z(c, d, e) \cup Z(b, f, g) \cup Z(b, c, d) \cup Z(b, d, f) \cup Z(a, c, e) \cup Z(a, f))$ & Yes & No & No & 93.10\% \\
$X \setminus (Z(a, f) \cup Z(c, d, e) \cup Z(a, c, e) \cup Z(b, d))$ & Yes & No & No & 93.60\% \\
$X \setminus (Z(c, e) \cup Z(b, c, d) \cup Z(b, d, f) \cup Z(a, b, f) \cup Z(a, e, g) \cup Z(a, e, f))$ & Yes & No & No & 93.30\% \\
$X \setminus (Z(c, e) \cup Z(c, d, g) \cup Z(b, c, d) \cup Z(b, d, f) \cup Z(a, b, f) \cup Z(a, e, f))$ & Yes & No & No & 92.50\% \\
$X \setminus (Z(c, d, g) \cup Z(c, d, e) \cup Z(a, c, e) \cup Z(b, c, d) \cup Z(b, d, f) \cup Z(a, b, f) \cup Z(a, e, f))$ & Yes & No & No & 94.00\% \\
$X \setminus (Z(c, d, e) \cup Z(a, c, e) \cup Z(b, c, d) \cup Z(b, d, f) \cup Z(a, b, f) \cup Z(a, e, f))$ & Yes & No & No & 94.10\% \\
\hline
$X \setminus (Z(b) \cup Z(a, f))$ & No & Yes & --- & --- \\
$X \setminus (Z(a))$ & No & Yes & --- & --- \\
$X \setminus (Z(b, f) \cup Z(a, f))$ & No & Yes & --- & --- \\
$X \setminus (Z(a) \cup Z(b, f))$ & No & Yes & --- & --- \\
$X \setminus (Z(b))$ & No & Yes & --- & --- \\
$X \setminus (Z(c))$ & No & Yes & --- & --- \\
$X \setminus (Z(a, e) \cup Z(c, e))$ & No & Yes & --- & --- \\
$X \setminus (Z(c) \cup Z(a, e))$ & No & Yes & --- & --- \\
$X \setminus (Z(a) \cup Z(c, e))$ & No & Yes & --- & --- \\
$X \setminus (Z(c, d) \cup Z(b))$ & No & Yes & --- & --- \\
$X \setminus (Z(c) \cup Z(b, d))$ & No & Yes & --- & --- \\
$X \setminus (Z(c, d) \cup Z(b, d))$ & No & Yes & --- & --- \\
$X$ & No & No & --- \\
\hline
\end{longtable}
\end{tiny}

\end{document}